\renewcommand*{\p@section}{\S\,}
\renewcommand*{\p@subsection}{\S\,}
\renewcommand*{\p@subsubsection}{\S\,}
\newtheorem{thm}{Theorem}[section]
\newtheorem{lem}[thm]{Lemma}
\newtheorem{prop}[thm]{Proposition}
\newtheorem{rem}[thm]{Remark}
\theoremstyle{definition}
\numberwithin{equation}{section}
\newcommand{\CC}{\ensuremath{\mathbb{C}}}
\newcommand{\N}{\ensuremath{\mathbb{N}}}
\newcommand{\Z}{\ensuremath{\mathbb{Z}}}
\newcommand{\PP}{\operatorname{P}}
\newcommand{\tr}{\operatorname{tr}}
\newcommand{\diag}{\operatorname{diag}}
\newcommand{\Hom}{\operatorname{Hom}}
\newcommand{\Der}{\operatorname{Der}}
\newcommand{\Mat}{\operatorname{Mat}}
\newcommand{\Id}{\operatorname{Id}}
\newcommand{\Gl}{\operatorname{GL}}
\newcommand{\Rep}{\operatorname{Rep}}
\newcommand\dgal[1]{  \left\{\!\!\left\{#1\right\}\!\!\right\} }
\newcommand\dSN[1]{\left\{\!\!\left\{#1\right\}\!\!\right\}_{\operatorname{SN}}}
\def\psil{\psi_{\ell}}
\def\nn{\mathcal N}
\def\aalpha{{\widetilde{\alpha}}}
\def\qq{{\widetilde{q}}}
\def\h{\mathfrak{h}}
\def\hreg{\mathfrak{h}_{\mathrm{reg}}}
\def\VV{\mathcal V}
\begin{document}

\title[Quivers and RS models] 
{Multiplicative quiver varieties and generalised Ruijsenaars-Schneider models}

%
\author{Oleg Chalykh}
 \address[Oleg Chalykh]{School of Mathematics\\
         University of Leeds\\
         Leeds, LS2 9JT, UK\\}
 \email{o.chalykh@leeds.ac.uk}

\author{Maxime Fairon}
 \address[Maxime Fairon]{School of Mathematics\\
         University of Leeds\\
         Leeds, LS2 9JT, UK\\}
 \email{mmmfai@leeds.ac.uk}


\begin{abstract}
We study some classical integrable systems naturally associated with multiplicative quiver varieties for the (extended) cyclic quiver with $m$ vertices. The phase space of our integrable systems is obtained by quasi-Hamiltonian reduction from the space of representations of the quiver. Three families of Poisson-commuting functions are constructed and written explicitly in suitable Darboux coordinates. The case $m=1$ corresponds to the tadpole quiver and the Ruijsenaars--Schneider system and its variants, while for $m>1$ we obtain new integrable systems that generalise the Ruijsenaars--Schneider system. These systems and their quantum versions also appeared recently in the context of supersymmetric gauge theory and cyclotomic DAHAs \cite{BEF, BFNa, BFNb, KN}, 
as well as in the context of the Macdonald theory \cite{CE}.
\end{abstract}

\maketitle

 \setcounter{tocdepth}{1}


\section{Introduction}
\label{intro}

Among the most powerful geometric techniques in the theory of integrable systems is the method of Hamlitonian (or symplectic) reduction. Invented initially for reducing the degrees of freedom in Hamiltonian systems with symmetries, the concept of a moment map and symplectic reduction have since found a multitude of uses beyond their initial scope. One of the earlier examples of a Hamiltonian reduction was given by Kazhdan, Kostant and Sternberg \cite{KKS78}, who demonstrated how to obtain the celebrated Calogero--Moser system from a very simple system on $T^*\mathfrak{gl}_n$. Since then, many integrable systems have been obtained or interpreted by similar methods. Among those is a remarkable generalisation of the Calogero--Moser system introduced by Ruijsenaars and Schneider \cite{Ruijs86}. The latter system was interpreted in terms of an infinite-dimensional symplectic reduction by Nekrasov \cite{N}, extending his earlier work with A.~Gorsky \cite{GN}. Hyperbolic Ruijsenaars--Schneider system can also be obtained by a finite-dimensional reduction in the spirit of \cite{KKS78}, as was demonstrated by Fock and Rosly \cite{FockRosly}. Although Fock and Rosly employ Hamiltonian (or Poisson) reduction, their construction allows an interpretation in terms of \emph{quasi-Hamiltonian} reduction. We recall that the method of quasi--Hamiltonian reduction was developed by Alekseev, Malkin and Meinrenken in \cite{AMM}, see also \cite{QuasiP}.  The main difference is that the reduction is performed on a space which may not be symplectic, and the moment map takes values in the Lie group rather than the Lie algebra. Not attempting at a comprehensive review, we refer the reader to some of the more recent papers \cite{P11, FK09a, FK09b, FehKlim12, M, FG}, where the Ruijsenaars--Schneider model and its variants are treated by the method of (quasi-)Hamiltonian reduction, and where further  references can be found. Let us also mention an alternative geometric approach to many-body problems by Krichever \cite{Kr}, in which the Lax matrix structure play the central role instead, and the Hamiltonian picture is derived from that, cf. \cite{KrP, Kr2, KrS}.

From yet another perspective, a unified view onto the Calogero--Moser and Ruijsenaars--Schneider system can be achieved by noticing that in both cases the reduction is done on (the cotangent bundle to) the space of representations of a one-loop quiver. Such a view onto the (complexified) Calogero--Moser system was brought forward by G.~Wilson's work \cite{W} relating the rational Calogero--Moser system, adelic Grassmannian, and the KP hierarchy, and it has been further deepened in \cite{BW1}, \cite{BW2}, cf. \cite{BC07, BP}. 
The present paper stems from a natural idea to look for a generalisation of Wilson's results for more complicated quivers. We recall that with any quiver Nakajima associates in \cite{Nak94} a class of symplectic quotients called \emph{quiver varieties}. There exists also a multiplicative version of quiver varieties, introduced by Crawley-Boevey and Shaw \cite{CBShaw} and interpreted via quasi-Hamiltonian reduction by Van den Bergh \cite{VanDenBergh}. Affiine Dynkin quivers are a particularly well studied class, and a large part of Wilson's (and Berest--Wilson's) results have already been extended to this case by  Ginzburg, Baranovsky and Kuznetsov \cite{BGK1}, \cite{BGK2} (see also \cite{Eshmatov}, \cite{CS}). However, the multiplicative case has not been systematically looked at, apart from the already mentioned case of a one-loop quiver.  
This was the main motivation behind our work. We will focus on the link to integrable particle dynamics; other aspects of the Calogero--Moser correspondence will be discussed elsewhere. Our main result is a construction of new generalisations of the Ruijsenaars--Schneider system, related to cyclic quivers. This is achieved by performing a quasi-Hamiltonian reduction on the space of representations of the associated multiplicative preprojective algebras $\Lambda^q$ of Crawley-Boevey and Shaw \cite{CBShaw}. Our main tool is the formalism of double (quasi-)Poisson algebras due to Van den Bergh \cite{VanDenBergh, VdBquasiSpaces}. The constructed integrable systems come equipped with a complete phase space (represented by a suitable multiplicative quiver variety), and the associated Hamiltonian dynamics can be explicitly integrated. By constructing Darboux coordinates on the phase space, we express the new integrable Hamiltonians in coordinates, which then allows us to identify them as generalisations of the Ruijsenaars--Schneider system.

Interestingly, quantum versions of some of these systems appeared in a seemingly unrelated context in \cite{CE}, where they were called \emph{twisted Macdonald--Ruijsenaars systems}. By computing some of these quantum Hamiltonians explicitly, we are able to see this relationship in the case of the quiver with two vertices, although a direct comparison in the general case is more difficult. However, a recent work by Braverman, Etingof and Finkelberg \cite{BEF}, which appeared while we were finishing the present paper, clarifies this connection rather remarkably. It introduces a cyclotomic version of the double affine Hecke algebra (DAHA) in type $A$. Inside the cyclotomic DAHA there are three natural commutative subalgebras and they give rise to quantum integrable systems, in the same way as the usual DAHA can be used to produce the Macdonald--Ruijsenaars operators. The classical versions of these systems correspond to the $q=1$ limit of the cyclotomic DAHA (cf. \cite{Oblomkov} for the case of the usual DAHA), and this leads to the multiplicative quiver varieties for the cyclic quiver. Thus, the integrable systems constructed in \cite{BEF} coincide (on the classical level) with those constructed by us. The interpretation of these integrbale systems via the cyclotomic DAHA in \cite{BEF} allows to explain their relationship to the twisted Macodnald--Ruijsenaars systems from \cite{CE} in type $A$. Our methods are quite different in comparison, and they allow us to find explicit formulas for the corresponding classical Hamiltonians and integrate the Hamiltonian flows (the approach via the cyclotomic DAHA in \cite{BEF} is less explicit). Curiously, these Hamiltonians become much simpler under the Cherednik--Fourier transform. In this form they appeared in the work of Braverman, Finkelberg, and Nakajima \cite{BFNa, BFNb} on the quantized Coulomb branch of quiver gauge theories, see also a related work of Kodera and Nakajima \cite{KN}. This can also be seen from our formulas at the classical level, when the Cheredink--Fourier transform becomes the angle-action transform studied by Ruijsenaars \cite{R88}. See Section \ref{final} below for more details. Apart from being more explicit compared to \cite{BEF}, our approach also has an advantage of being better suited for studying spin versions of the Ruisjenaars-Scheider system and its generalisations; this will be a subject of a future work.  

The structure of the paper is as follows. In Section \ref{prel} we first describe the general formalism of double Poisson brackets and quasi-Poisson algebras due to Van den Bergh \cite{VanDenBergh}, and then exemplify it for the multiplicative quiver varieties. Section \ref{tadpole} looks at the tadpole quiver, explaining how to obtain the hyperbolic Ruijsenaars--Schneider system by quasi-Hamiltonian reduction. 
In Section \ref{cyclic} we consider the multiplicative quiver varieties (Calogero--Moser spaces) for the framed cyclic quiver with $m$ vertices. We introduce three Poisson commuting families of functions on those Calogero--Moser spaces, and integrate the corresponding Hamiltonian flows. We then write these Hamiltonians in suitable Darboux coordinates, identifying them as generalisations of the hyperbolic Ruijsenaars--Schneider system. Finally, in Section \ref{final} we discuss the relationship between our work and the results of \cite{CE} and \cite{BFNa, BFNb, KN, BEF}. In particular, we are able to write explicitly the integrable quantum Hamiltonaians from \cite{BEF} in the case of the quiver with two vertices. The paper finishes with three appendices containing some of the more technical proofs.

{\it Acknowledgement.} It is our pleasure to thank L. Feh{\'e}r, J.F. van Diejen, P. Etingof, P. Iliev, I. Marshall, S. Ruijsenaars for useful comments and discussions. We are especially grateful to P. Etingof who shared with us the results of \cite{BEF} before their publication. This allowed us to streamline some of our considerations in Section \ref{cyclic}. A part of this paper was written during the first author's stay in November 2016 at the Centro Internacional de Ciencias (CIC), Cuernavaca. He thanks the organisers of the programme "Integrable and quasi-integrable systems" and CIC for the hospitality and excellent working conditions. 
The first author was partially supported by EPSRC under grant EP/K004999/1.  
The second author is supported by a University of Leeds 110 Anniversary Research Scholarship.

\section{Preliminaries}  \label{prel}

In this section we first recap the theory of double Poisson brackets and double (quasi-)Poisson algebras due to Van den Bergh \cite{VanDenBergh}. We then describe a concrete example of this formalism, related to multiplicative preprojective algebras and multiplicative quiver varieties of Crawley-Boevey and Shaw \cite{CBShaw}. We will follow the notation of the papers \cite{VanDenBergh, VdBquasiSpaces}, where the reader can find many more details. 

\subsection{Double brackets and double derivations} \label{ss:dAS}
We fix an algebra $A$ over $\CC$. 
For an element $a\in {A}\otimes {A}$, we will use a shorthand notation $a'\otimes a''$ for $\sum_i a_i'\otimes a_i''$. We set $a^\circ=a''\otimes a'$. More generally,  
for any $s\in S_n$ we define $\tau_s:\,A^{\otimes n}\to A^{\otimes n}$ by 
$\tau_s(a_1\otimes \ldots \otimes a_n)=a_{s^{-1}(1)}\otimes \ldots \otimes a_{s^{-1}(n)}$. Thus, $a^\circ=\tau_{(12)}a$. The multiplication map $m:A^{\otimes n} \to A$ is 
$m(a_1\otimes \ldots \otimes a_n)=a_1\ldots a_n$. We view $A^{\otimes n}$ as an $A$-bimodule via the \emph{outer} bimodule structure 
$b(a_1\otimes \ldots \otimes a_n)c=ba_1\otimes \ldots \otimes a_nc$. 
If $B$ is a $\CC$-algebra, then we say that $A$ is a $B$-algebra if it is equipped with a $\CC$-algebra 
map $B\to A$. 

An \emph{$n$-bracket} is a linear map 
$\dgal{-,\ldots,-} : A^{\otimes n}\to A^{\otimes n}$ which is a derivation in its last 
argument for the outer bimodule structure on $A^{\otimes n}$, 
and which is cyclically anti-symmetric: 
\begin{equation*}
\tau_{(1\ldots n)}\circ \dgal{-,\ldots,-}\circ \tau^{-1}_{(1\ldots n)}
=(-1)^{n+1}\dgal{-,\ldots,-}\,. 
\end{equation*}
If $A$ is a $B$-algebra then we assume that the bracket is $B$-linear, i.e. it vanishes when its last argument is in the image of $B$.  
We call a $2$- and a $3$-bracket respectively a \emph{double} and a \emph{triple bracket}. 
In particular, a double bracket satisfies $\dgal{a,b}=-\dgal{b,a}^\circ$ and 
$\dgal{a,bc}=b\dgal{a,c}+\dgal{a,b}c$. Any double bracket $\dgal{-,-}$  defines an induced triple bracket 
$\dgal{-,-,-}$ given by  
\begin{equation*}
\label{Eq:TripBr}
 \dgal{a,b,c}=\dgal{a,\dgal{b,c}'}\otimes \dgal{b,c}''+\tau_{(123)}\dgal{b,\dgal{c,a}'}\otimes \dgal{c,a}''
+\tau_{(123)}^2\dgal{c,\dgal{a,b}'}\otimes \dgal{a,b}'' \,.
\end{equation*}

A double bracket on $A$ is called a \emph{double Poisson} bracket if the associated triple bracket vanishes. 
For any double (respectively, double Poisson) bracket $\dgal{-,-}$, the bracket $\{-,-\}:=m \circ \dgal{-,-}$ descends to an antisymmetric biderivation (respectively, a Lie bracket) on $A/[A,A]$ \cite[2.4.1, 2.4.6]{VanDenBergh}.

Following \cite{CB}, we call the elements of $D_{A/B}:=\Der_B(A,A\otimes A)$ \emph{double derivations}, and we make $D_{A/B}$ into an $A$-bimodule by using the \emph{inner} bimodule structure on $A\otimes A$:  
if $\delta\in D_{A/B}$ and $a,b,c\in A$, then $b\,\delta\, c (a)=\delta(a)'\,c\otimes b\,\delta(a)''$. Let $D_BA:=T_AD_{A/B}$ be the tensor algebra of this bimodule; this is a graded algebra, with $A$ placed in degree $0$ and $D_{A/B}$ in degree $1$. The elements of degree $n$ in $D_BA$ can be used to define $n$-brackets on $A$: 
\begin{prop}   \emph{(\cite[4.1.1]{VanDenBergh})}
\label{Prop:BrQ}
There is a well-defined linear map $\mu:(D_BA)_n\to \{B$-linear $n$-brackets on $A\}$, 
$Q\mapsto \dgal{-,\ldots,-}_Q$ which on $Q=\delta_1 \ldots \delta_n$ is given by 
\begin{equation*}
  \begin{aligned}
\label{Eq:BrQ}
\dgal{-,\ldots,-}_Q&=\sum_{i=0}^{n-1}(-1)^{(n-1)i}\tau^i_{(1\ldots n)}\circ\dgal{-,\ldots,-}_Q^{\widetilde{ }}
\circ\tau^{-i}_{(1\ldots n)}\,\,, \\
\dgal{a_1,\ldots,a_n}_Q^{\widetilde{ }}&=\delta_n(a_n)'\delta_1(a_1)''\otimes \delta_1(a_1)'\delta_2(a_2)''\otimes \ldots \otimes 
\delta_{n-1}(a_{n-1})'\delta_n(a_n)''\,.
 \end{aligned}
\end{equation*}
The map $\mu$ factors through $D_BA/[D_BA,D_BA]$ (for the graded commutator).
\end{prop}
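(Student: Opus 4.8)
The plan is to reduce everything to the un-symmetrised ``tilde'' expression. Write $\sigma:=\tau_{(1\dots n)}$ and $T:=\dgal{-,\dots,-}^{\widetilde{\ }}_Q$ for $Q=\delta_1\cdots\delta_n$, so that by definition $\dgal{-,\dots,-}_Q=\sum_{i=0}^{n-1}(-1)^{(n-1)i}\sigma^iT\sigma^{-i}$. There are three assertions to establish: (i) $\dgal{-,\dots,-}_Q$ is a $B$-linear $n$-bracket; (ii) $Q\mapsto\dgal{-,\dots,-}_Q$ is well defined on $(D_BA)_n=D_{A/B}^{\otimes_A n}$, i.e.\ it respects the relations of the tensor algebra over $A$; and (iii) it kills graded commutators. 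In each case I would first verify the statement at the level of $T$ and then propagate it through the cyclic symmetrisation.

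For (i), the derivation property in the last argument is the crux. For $T$ itself it is immediate: in the tilde formula the last argument $a_n$ occurs only through $\delta_n(a_n)'$ (at the left end of the first tensor factor) and $\delta_n(a_n)''$ (at the right end of the last tensor factor), so the Leibniz rule for $\delta_n$ gives $T(\dots,bc)=T(\dots,b)\,c+b\,T(\dots,c)$ for the outer structure. The key observation is that each conjugate $\sigma^iT\sigma^{-i}$ is again a derivation in the last slot. Indeed, $\sigma^{-i}$ moves the last argument to position $p:=n-i \pmod n$, where $T$ feeds it into $\delta_p$; by Leibniz the resulting $c$ is appended internally at the right end of factor $p$ and the resulting $b$ is prepended at the left end of factor $p+1$. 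Applying $\sigma^i$ then carries factor $p$ to the final position $n$ and factor $p+1$ to the first position $1$, so these two internal insertions become precisely an outer right-multiplication by $c$ and an outer left-multiplication by $b$. Hence every summand, and therefore $\dgal{-,\dots,-}_Q$, is an outer derivation in its last argument. $B$-linearity is then automatic: if the last argument lies in the image of $B$ it is annihilated by $\delta_p\in\Der_B(A,A\otimes A)$ in every summand.

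Cyclic anti-symmetry I expect to be a purely formal consequence of the symmetrisation. Conjugating the sum by $\sigma$ and reindexing $j=i+1$ gives $\sigma\,\dgal{-,\dots,-}_Q\,\sigma^{-1}=(-1)^{n-1}\sum_{j=1}^{n}(-1)^{(n-1)j}\sigma^jT\sigma^{-j}$; since $\sigma^n=\mathrm{id}$ and $(n-1)n$ is even, the $j=n$ term coincides with the $j=0$ term, so the sum is unchanged, and using $(-1)^{n-1}=(-1)^{n+1}$ one recovers the required identity. For (ii) it suffices to check invariance under moving a single $a\in A$ across one tensor symbol, from the right of $\delta_i$ to the left of $\delta_{i+1}$, using the inner bimodule structure $b\,\delta\,c\,(x)=\delta(x)'c\otimes b\,\delta(x)''$. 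A short inspection of the tilde formula shows that in both presentations only tensor factor $i+1$ is affected, and in both it becomes $\delta_i(a_i)'\,a\,\delta_{i+1}(a_{i+1})''$, all other factors being untouched; hence $T$, and with it the symmetrisation, is unchanged.

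The last statement, that $\mu$ factors through $D_BA/[D_BA,D_BA]$, I would reduce to the effect of a single cyclic rotation of the derivations. The symmetrisation already averages over the cyclic positions, so I expect that passing from $Q=\delta_1\cdots\delta_n$ to its rotation $\delta_2\cdots\delta_n\delta_1$ reproduces the bracket up to the sign $(-1)^{n-1}$, after matching rotation of the $\delta$'s with conjugation by $\sigma$. Iterating this $p$ times shows that for $Q=Q_1Q_2$ with $\deg Q_1=p$, $\deg Q_2=q$ and $p+q=n$ one has $\dgal{-,\dots,-}_{Q_2Q_1}=(-1)^{(n-1)p}\dgal{-,\dots,-}_{Q_1Q_2}$; since $(n-1)p=p(p-1)+pq\equiv pq\pmod 2$, this is exactly $\dgal{-,\dots,-}_{Q_1Q_2}=(-1)^{pq}\dgal{-,\dots,-}_{Q_2Q_1}$, so $\mu$ annihilates the graded commutator. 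The main obstacle throughout is the bookkeeping of indices and of the signs $(-1)^{(n-1)i}$: in particular, making the ``internal-to-boundary'' conversion in (i) rigorous at every position $p$, and carefully identifying rotation of the $\delta$'s with $\sigma$-conjugation in (iii), are the places where an argument can most easily go astray.
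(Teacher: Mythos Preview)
The paper does not prove this proposition at all: it is stated with a citation to \cite[4.1.1]{VanDenBergh} and then used as a black box, so there is no ``paper's own proof'' to compare against. Your sketch is therefore being measured against Van den Bergh's original argument rather than anything in this paper.

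That said, your outline is essentially correct and follows the natural route. The verification in (i) that each conjugate $\sigma^iT\sigma^{-i}$ is an outer derivation in the last slot is the real content, and your ``internal-to-boundary'' bookkeeping (factor $p$ goes to position $n$, factor $p+1$ goes to position $1$ under $\sigma^i$) is accurate. For (iii), your identification $T_{Q'}=\sigma^{-1}T_Q\sigma$ for the single cyclic rotation $Q'=\delta_2\cdots\delta_n\delta_1$ is the right computation, and the parity reduction $(n-1)p\equiv pq\pmod 2$ is clean. One small point you gloss over: a general element of $(D_BA)_n$ is a sum of terms $a_0\delta_1a_1\cdots\delta_na_n$ with $a_i\in A$, and the graded commutator must be killed for these too, not just for pure products of derivations; this follows once you observe that the outer $a_0,a_n$ can be absorbed (modulo commutators with degree-zero elements, which your argument already handles since $(-1)^{0\cdot n}=1$), but it deserves a sentence.
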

The algebra $D_BA$ may be viewed as a noncommutative version of the algebra of polyvector fields: according to \cite[3.2.2]{VanDenBergh} $D_BA$ admits a canonical \emph{double Schouten--Nijenhuis bracket}, which makes $D_BA$ into a double Gerstenhaber algebra. We denote this bracket as $\dSN{-,-}$, and we compose it with the multiplication on $D_BA$ to obtain $\{-,-\}_{\operatorname{SN}}:=m\circ\dSN{-,-}$.

\subsection{Double quasi-Poisson algebras} \label{ss:doubleqP}
We now assume that $B$ is commutative and semi-simple, \emph{i.e.}  
$B={\CC} e_1 \oplus \ldots \oplus {\CC} e_n$ with $e_ie_j=\delta_{ij}e_i$. 
We define for all $i$ a double derivation $E_i\in D_{A/B}$ such that 
$ E_i(a)=ae_i\otimes e_i - e_i\otimes ae_i$. 
A \emph{double quasi-Poisson bracket} on $A$ is a $B$-linear bracket $\dgal{-,-}$, 
such that the induced triple bracket satisfies $\dgal{-,-,-}=\frac{1}{12}\sum_i \dgal{-,-,-}_{E_i^3}$, where the brackets in the right-hand side are defined in Proposition \ref{Prop:BrQ}. 
In this case, we say that $A$ is a \emph{double quasi-Poisson algebra}.

A \emph{multiplicative moment map} for a double quasi-Poisson algebra $(A,\dgal{-,-})$ is an element 
$\Phi=\sum_i\Phi_i$ with $\Phi_i\in e_iAe_i$ such that we have   
$\dgal{\Phi_i,a}=\frac{1}{2}(\Phi_iE_i+E_i\Phi_i)(a)$ for all $a\in A$. When a double quasi-Poisson algebra 
is equipped with a multiplicative moment map, we say that it is a \emph{quasi-Hamiltonian algebra}. 

Assume that there is an element $P\in (D_BA)_2$ such that 
$\{P,P\}_{\operatorname{SN}}=\frac{1}{6}\sum_iE_i^3$ mod $[D_BA,D_BA]$ (for the graded commutator). Then 
we say that $A$ is a \emph{differential double quasi-Poisson algebra} with the 
\emph{differential double quasi-Poisson bracket} $\dgal{-,-}_P$. This implies that $\dgal{-,-}_P$ is a double quasi-Poisson 
bracket \cite[4.2.3]{VanDenBergh}. 

\subsection{Representation spaces} \label{ss:PoissonVar}
For a $\CC$-algebra $A$ and any $N\in\N$, the representation space $\Rep(A, N)$ is the affine scheme that parametrises algebra homomorphisms $\varrho: A \to \Mat_N(\CC)$. The coordinate ring $\mathcal{O}(\Rep(A, N))$ is generated by the functions $a_{ij}$ for $a\in A$, $i, j=1, \ldots, N$ defined by $a_{ij}(\varrho)=\varrho(a)_{ij}$ at any point $\varrho\in \Rep(A, N)$. The functions $a_{ij}$ are linear in $a$ and satisfy the relations $(ab)_{ij}=\sum_k a_{ik}b_{kj}$. We can therefore associate with any $a\in A$ a \emph{matrix-valued} function $\mathcal{X}(a):=(a_{ij})_{i,j=1,\ldots, N}$ on $\Rep(A,N)$. Similarly, any double derivation $\delta\in\Der (A,A\otimes A)$ gives rise to a matrix-valued vector field $\mathcal{X}(\delta)=(\delta_{ij})_{i,j=1,\ldots, N}$ on $\Rep(A, N)$, where $\delta_{ij}$ is a derivation of $\mathcal{O}(\Rep(A, N))$ defined by the rule $\delta_{ij}(a_{uv})=\delta'(a)_{uj} \delta''(a)_{iv}$. 

Everything can also be defined in a relative setting, i.e. for a $B$-algebra $A$, where $B$ is of the form $B={\CC} e_1\oplus\ldots\oplus{\CC} e_n$ with $e_i^2=e_i$. Representation spaces are now indexed by $n$-tuples $\alpha=(\alpha_1, \ldots, \alpha_n)\in \N^n$. Given $\alpha$ with $\alpha_1+\ldots +\alpha_n=N$, we embed $B$ diagonally into $\Mat_N(\CC)$ so that $\Id_N$ is split into a sum of $n$ diagonal blocks of size $\alpha_1, \ldots, \alpha_n$, representing the idempotents $e_i$. By definition, $\Rep_B(A, \alpha)=\Hom_B(A,\Mat_{N}(\CC))$, and it can be viewed as an affine scheme in the same way as $\Rep(A, N)$. Note in particular that for any $\Phi\in\oplus_i \, e_i A e_i$, the matrix-valued function $\mathcal{X}(\Phi)$ on $\Rep_B(A,\alpha)$ takes values in block matrices $\prod_i\Mat_{\alpha_i}({\CC})$.

On $\Rep(A, N)$ we have a natural action of $\Gl_N$, induced by conjugation action on $\Mat_N(\CC)$. Similarly, we have an action of  $\Gl_\alpha=\prod_i \Gl_{\alpha_i}$ on $\Rep_B(A, \alpha)$. 

\subsection{Quasi-Poisson manifolds}\label{QPM}
A double quasi-Poisson bracket on an algebra $A$ makes its representation space into a quasi-Poisson manofld. Let us first recall the geometric setup of \cite{ QuasiP}, following the notation of \cite[7.13]{VanDenBergh}. Let $M$ be a $G$-manifold, for $G$ a Lie group whose Lie algebra $\mathfrak{g}$ admits a non-degenerate $G$-invariant bilinear form $(-,-)$. If $(e_a)$ is a basis of $\mathfrak{g}$ and $(e^a)$ the dual basis with respect to 
$(-,-)$, the Cartan $3$-tensor is defined as $\phi=\frac{1}{12}C^{abc}e_a\wedge e_b \wedge e_c$, for 
$C^{abc}=(e^a,[e^b,e^c])$ the tensor of structure constants. Write $\xi^L$ and $\xi^R$ respectively to denote the 
left and right invariant vector fields on $G$, for $\xi \in \mathfrak{g}$. 

The $G$-action on $M$ gives rise to a Lie algebra homomorphism 
$(-)_M:\mathfrak{g}\to \Der \mathcal O(M)$, which can be extended to polyvector fields to define a $3$-tensor $\phi_M$.  
We say that $M$ is a quasi-Poisson manifold if there exists an invariant bivector field $P$ on $M$ such that its 
Schouten-Nijenhuis bracket with itself satisfies $[P,P]=\phi_M$. One associates with $P$ a bracket on $\mathcal O(M)$ defined by $\{f,g\}=P(\mathrm{d}f, \mathrm{d}g)$.
A \emph{multiplicative moment map} is an $\operatorname{Ad}$-equivariant 
map $\Phi:M\to G$  satisfying 
\begin{equation}
\{g\circ \Phi, -\}=\frac{1}{2}(e^a)_M\left((e_a^L+e_a^R)(g)\circ \Phi\right)\,, 
\end{equation}
for all functions $g\in\mathcal O(G)$.
A \emph{Hamiltonian quasi-Poisson manifold} is such a triple $(M, P, \Phi)$. In the case where the action of $G$ on $M$ is free and proper, for each conjugacy class $\mathcal{C}_g$ 
of $g\in G$ the subset $\Phi^{-1}(\mathcal{C}_g)/G$ is a Poisson manifold, and this process is called 
\emph{quasi-Hamiltonian reduction}. 

Now let us turn to geometric structures induced on representation spaces of a double quasi-Poisson algebra $A$. 
Assume that $\dgal{-,-}:A\times A\to A\otimes A$ is a $B$-linear double bracket on $A$.

\begin{prop} \label{Prop:BrInduced} \emph{(\cite[7.5.1, 7.5.2, 7.8, 7.12.2]{VanDenBergh})} 
There is a unique antisymmetric biderivation (bivector)
$\{-,-\}:\mathcal{O}(\Rep_B(A,\alpha))\times \mathcal{O}(\Rep_B(A,\alpha)) \to \mathcal{O}(\Rep_B(A,\alpha))$
such that for all $a,b\in A$, 
\begin{equation}\label{derr}
 \{a_{ij},b_{uv}\}=\dgal{a,b}'_{uj}\, \dgal{a,b}''_{iv}\,\, . 
\end{equation}
If $\dgal{-,-}$ is a double Poisson bracket, then this bivector is Poisson so $\mathcal O(\Rep_B(A, \alpha))$ is a Poisson algebra.
\end{prop}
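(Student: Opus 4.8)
The plan is to construct the bracket $\{-,-\}$ explicitly on generators and then verify that the defining formula \eqref{derr} is forced, well-defined, and extends by the Leibniz rule to all of $\mathcal{O}(\Rep_B(A,\alpha))$. First I would recall that the coordinate ring is generated by the functions $a_{ij}$ subject only to linearity in $a$ and the multiplicativity relations $(ab)_{ij}=\sum_k a_{ik}b_{kj}$ (together with $B$-linearity, which here means the idempotents $e_i$ act as the block-diagonal projections). Since an antisymmetric biderivation is determined by its values on a generating set, uniqueness is immediate once existence is established: formula \eqref{derr} specifies $\{a_{ij},b_{uv}\}$ on all generators, and a biderivation is forced to extend this by Leibniz in each slot. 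So the real content is \emph{existence}, i.e.\ showing that the prescription \eqref{derr} is consistent with the relations among the generators.

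The key step is therefore a \emph{well-definedness} check: the right-hand side of \eqref{derr} must respect (i) linearity in $a$ and in $b$, and (ii) the multiplicativity relations in each argument. Linearity is clear because $\dgal{a,b}$ is linear in each entry and the map $\mathcal{X}$ is linear. For the multiplicative relation in the second slot I would compute $\{a_{ij},(bc)_{uv}\}$ two ways: on one hand via $\{a_{ij},\sum_k b_{uk}c_{kv}\}$ using the Leibniz rule, and on the other hand using the double-bracket Leibniz identity $\dgal{a,bc}=b\dgal{a,c}+\dgal{a,b}c$ stated in \S2.1. The point is that the index contraction in \eqref{derr}, namely $\dgal{a,b}'_{uj}\,\dgal{a,b}''_{iv}$, turns the \emph{inner} multiplication of components of $\dgal{a,bc}$ into exactly the sum over the intermediate matrix index $k$ that appears in the Leibniz expansion on the representation space. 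This is the heart of why Van den Bergh's component formula is the correct one, and it is the step I expect to be the main obstacle: one must track carefully how the two tensor factors of $\dgal{-,-}$ distribute onto the four matrix indices $i,j,u,v$ and confirm that the bookkeeping matches on both sides. The relation in the first slot follows from antisymmetry together with the identity $\dgal{a,b}=-\dgal{b,a}^\circ$, which I would use to reduce the first-slot computation to the second-slot one already verified.

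For the antisymmetry of the resulting bracket I would invoke $\dgal{a,b}=-\dgal{b,a}^\circ$ directly: applying the index assignment of \eqref{derr} to $\dgal{b,a}^\circ=\dgal{b,a}''\otimes\dgal{b,a}'$ and comparing shows $\{a_{ij},b_{uv}\}=-\{b_{uv},a_{ij}\}$ on generators, whence on all of $\mathcal{O}(\Rep_B(A,\alpha))$ by the biderivation property. The $B$-linearity ensures the construction descends correctly to the block structure indexed by $\alpha$, so the same argument works verbatim in the relative setting.

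Finally, for the Poisson (Jacobi) claim when $\dgal{-,-}$ is a double Poisson bracket, the strategy is to reduce the Jacobi identity for $\{-,-\}$ on $\mathcal{O}(\Rep_B(A,\alpha))$ to the vanishing of the induced triple bracket $\dgal{-,-,-}$ on $A$. Since the Jacobiator of a biderivation is itself a triderivation, it suffices to check the Jacobi identity on triples of generators $a_{ij},b_{uv},c_{pq}$. I would expand $\{a_{ij},\{b_{uv},c_{pq}\}\}$ and its cyclic permutations using \eqref{derr} twice, and show term-by-term that the result is the image under the analogous index-contraction map of the triple bracket $\dgal{a,b,c}$ defined in \S2.1. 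The hypothesis that $\dgal{-,-,-}=0$ then kills the Jacobiator. This last reduction is essentially a longer version of the same index-tracking computation as in the well-definedness step, so I would present it schematically and cite \cite[7.5.1, 7.5.2]{VanDenBergh} for the detailed verification rather than grinding through all three cyclic terms.
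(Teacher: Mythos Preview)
Your outline is correct and is essentially the standard argument from \cite{VanDenBergh}: one checks that the prescription \eqref{derr} is compatible with the multiplicativity relation $(ab)_{ij}=\sum_k a_{ik}b_{kj}$ via the outer-bimodule Leibniz rule for $\dgal{-,-}$, deduces antisymmetry from $\dgal{a,b}=-\dgal{b,a}^\circ$, and reduces Jacobi on generators to the vanishing of the induced triple bracket. Note, however, that the paper itself does not give a proof of this proposition at all --- it is simply quoted from \cite[7.5.1, 7.5.2, 7.8, 7.12.2]{VanDenBergh} as background material --- so there is no ``paper's own proof'' to compare your approach against; what you have sketched is the content of Van den Bergh's original argument.
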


\begin{thm}
\label{Thm:Poiss}  \cite[7.8, 7.13.2]{VanDenBergh}
Assume that $(A,P)$ is a differential double quasi-Poisson algebra, which is quasi-Hamiltonian for the 
multiplicative moment map $\Phi\in \oplus_ie_iAe_i$. We have that $\Rep_B(A, \alpha)$ is a $\Gl_\alpha$-space with a quasi-Poisson bracket $\{-,-\}$ determined from $\dgal{-,-}_P$ by \eqref{derr}.
Then the matrix-valued function $\mathcal{X}(\Phi):\Rep_B(A,\alpha)\to\prod_i\Mat_{\alpha_i}({k})$ is a (geometric) multiplicative moment map 
for $\Rep_B(A,\alpha)$. Therefore, $\Rep_B(A, \alpha)$ (if smooth) admits a structure of a Hamiltonian quasi-Poisson manifold.  
\end{thm}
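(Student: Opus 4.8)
The plan is to manufacture, out of the algebraic data $(A,P,\Phi)$, the three geometric ingredients required in \S\ref{QPM} — a $\Gl_\alpha$-invariant bivector $P$ on $M:=\Rep_B(A,\alpha)$ satisfying $[P,P]=\phi_M$, an $\operatorname{Ad}$-equivariant map $\mathcal{X}(\Phi):M\to\Gl_\alpha$, and the moment-map identity — and to verify each in turn. The bivector is essentially already supplied: applying Proposition \ref{Prop:BrInduced} to the differential bracket $\dgal{-,-}_P$ produces an antisymmetric biderivation $\{-,-\}$ on $\mathcal{O}(M)$ through \eqref{derr}, and I take $P$ to be the associated bivector field. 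Its $\Gl_\alpha$-invariance is inherited from the $B$-linearity of the bracket and the equivariance of the constructions $a\mapsto\mathcal{X}(a)$ and $\delta\mapsto\mathcal{X}(\delta)$ under simultaneous conjugation, so nothing beyond bookkeeping is needed here.

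The heart of the argument is the identity $[P,P]=\phi_M$. Here I use that the Schouten--Nijenhuis bracket $[P,P]$ of the induced bivector is the image under $\mathcal{X}$ of the triple bracket $\dgal{-,-,-}_P$ attached to $\dgal{-,-}_P$; that is, the double contraction implicit in forming $[P,P]$ reproduces on coordinate functions exactly the combinatorics of the induced triple bracket, the $\tau_{(123)}$-symmetrisation matching the cyclic terms of $[P,P]$. It therefore suffices to identify the image of the right-hand side of the double quasi-Poisson condition, $\tfrac{1}{12}\sum_i\dgal{-,-,-}_{E_i^3}$, with the Cartan trivector $\phi_M=\tfrac{1}{12}C^{abc}(e_a)_M\wedge(e_b)_M\wedge(e_c)_M$. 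For this I take $\mathfrak g=\mathfrak{gl}_\alpha=\prod_i\mathfrak{gl}_{\alpha_i}$ with the trace form, $(e_a)$ the matrix units inside the blocks and $(e^a)$ their transposes. The key computation is that $\mathcal{X}(E_i)$, read off from $E_i(a)=ae_i\otimes e_i-e_i\otimes ae_i$ via $\delta_{jk}(a_{uv})=\delta'(a)_{uk}\,\delta''(a)_{jv}$, encodes precisely the fundamental vector fields of the $\Gl_{\alpha_i}$-factor, so that $E_i$ plays the role of the Casimir $\sum_a e_a\otimes e^a$ of block $i$. Feeding this into the map $\mu$ of Proposition \ref{Prop:BrQ} applied to $E_i^3$ then reconstructs the structure constants $C^{abc}=(e^a,[e^b,e^c])$, and summing over $i$ yields $\phi_M$. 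This is where essentially all the work resides.

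For the moment map I verify the two remaining conditions. Equivariance of $\mathcal{X}(\Phi):M\to\prod_i\Mat_{\alpha_i}(\CC)$ is immediate: since $\Phi\in\oplus_ie_iAe_i$ is a unit, its matrix is block-diagonal and invertible, hence lies in $\Gl_\alpha$, and the conjugation action on $\mathcal{X}(a)$ forces $\mathcal{X}(\Phi)(g\cdot\varrho)=g\,\mathcal{X}(\Phi)(\varrho)\,g^{-1}$, which is $\operatorname{Ad}$-equivariance. For the identity $\{g\circ\Phi,-\}=\tfrac12(e^a)_M\big((e^L_a+e^R_a)(g)\circ\Phi\big)$ it is enough to test on matrix-entry functions $g$, since these generate $\mathcal{O}(\Gl_\alpha)$ and both sides are derivations. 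On such $g$ the left- and right-invariant fields $e^L_a,e^R_a$ produce left and right multiplication by matrix units on $\mathcal{X}(\Phi)$, so the right-hand side becomes the representation-level image of $\tfrac12(\Phi_iE_i+E_i\Phi_i)$, while the left-hand side is the image under \eqref{derr} of $\dgal{\Phi_i,-}$. The algebraic moment-map condition $\dgal{\Phi_i,a}=\tfrac12(\Phi_iE_i+E_i\Phi_i)(a)$ then equates the two. Together with the quasi-Poisson property this makes $(M,P,\mathcal{X}(\Phi))$ a Hamiltonian quasi-Poisson manifold whenever $M$ is smooth.

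The main obstacle is the middle step: matching $\tfrac{1}{12}\sum_i\mathcal{X}(E_i^3)$ with the Cartan trivector. The difficulty is combinatorial rather than conceptual — one must track the cyclic symmetrisations $\tau^i_{(1\ldots n)}$ and signs in Proposition \ref{Prop:BrQ}, the double contraction of \eqref{derr} computing $[P,P]$, and the index gymnastics identifying $E_i$ with the block Casimir, and then check that all the constants (the $\tfrac{1}{12}$ here, the $\tfrac16$ in $\{P,P\}_{\operatorname{SN}}=\tfrac16\sum_iE_i^3$) line up correctly. Once the dictionary $\mathcal{X}(E_i)\leftrightarrow$ fundamental vector fields is pinned down, the moment-map and equivariance steps are routine translations.
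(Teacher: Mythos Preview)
The paper does not prove this theorem at all; it is stated as a quotation from Van den Bergh's work, with the citation \cite[7.8, 7.13.2]{VanDenBergh} standing in lieu of proof. There is therefore no ``paper's own proof'' to compare your proposal against.

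That said, your outline is a faithful sketch of how Van den Bergh's argument runs. The three verifications you isolate --- $\Gl_\alpha$-invariance of the induced bivector, the quasi-Poisson identity $[P,P]=\phi_M$, and the moment-map condition --- are exactly the content of his \S\S7.5--7.13, and you have correctly located the crux: the computation that $\mathcal{X}(E_i)$ realises the fundamental vector fields of the $\Gl_{\alpha_i}$-factor (equivalently, the block Casimir), so that the induced triple bracket of $E_i^3$ reproduces the Cartan trivector for that block. One small caution on your phrasing: the statement ``the Schouten--Nijenhuis bracket $[P,P]$ of the induced bivector is the image under $\mathcal{X}$ of the triple bracket'' is morally right but needs care, since $\mathcal{X}$ as defined acts on elements of $A$ and on double derivations, not directly on triple brackets; what one actually checks (as in \cite[7.5.2, 7.8]{VanDenBergh}) is an identity at the level of coordinate functions $a_{ij}$, $b_{uv}$, $c_{pq}$, where the Jacobiator of $\{-,-\}$ matches the expression built from $\dgal{a,b,c}$ componentwise. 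With that understood, your plan is sound and nothing is missing.
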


\subsection{Multiplicative preprojective algebras} \label{ss:MultqVar}

Let $Q=(Q, I)$ be a quiver with vertex set $I$ and arrow set $Q$. Let $\bar{Q}$ denote the double of $Q$, obtained by adjoining to every arrow $a\in Q$ its 
opposite, $a^{*}$. Define the maps $t,h:\bar{Q}\to I$ that associate to every arrow $a$ its tail and head, $t(a)$ and $h(a)$. In particular, $t(a)=h(a^*)$ and $h(a)=t(a^*)$.  
We define $\epsilon : \bar{Q}\to \{\pm 1\}$ the sign function which associates $1$ to every 
arrow of $Q$ and $-1$ to each arrow of $\bar{Q} \setminus Q$. We write $\CC\bar{Q}$ for the path algebra of 
$\bar{Q}$; it is generated by the idempotents $e_i$ (zero paths) associated to the vertices $i\in I$, and arrows $a\in\bar{Q}$, with multiplication given by concatenation of paths. We view $\CC\bar{Q}$ as a $B$-algebra, with $B=\oplus_{i\in I} \CC e_i$. Finally, we extend $*$ to an involution on $\CC\bar{Q}$ by setting $(a^*)^*=a$.

\begin{rem}\label{rem2.1}
We will use the convention that the composition of paths in $\CC\bar{Q}$ is written from left to right, i.e., $ab$ means ``$a$ followed by $b$", with $ab=0$ if $h(a)\ne t(b)$. 
\end{rem}

Let $A$ be obtained from $\CC \bar{Q}$ by inverting all elements $(1+aa^*)_{a\in \bar{Q}}$.  For all $a\in \bar{Q}$, 
define the element $\frac{\partial}{\partial a}$ 
of $D_BA$ which on $b\in \bar{Q}$ acts as 
\begin{equation}
\label{Eq:Dba}
 \frac{\partial b}{\partial a}=\left\{\begin{array}{ll} e_{t(a)}\otimes e_{h(a)}&\text{if }a=b \\ 
0&\text{otherwise} \end{array}\right. 
\end{equation}
Fix an arbitrary ordering $<$ on $\bar{Q}$ and consider the following element $\Phi\in A$:
\begin{equation}\label{pphi}
\Phi=\prod_{a\in \bar{Q}} (1+aa^*)^{\epsilon(a)}\,,
\end{equation}
where the product is taken with respect to the chosen ordering $<$. Following \cite{CBShaw}, given $q=\sum_{i\in I} q_ie_i$ with $q_i\in \CC^\times$, we define the deformed \emph{multiplicative preprojective algebra} as the quotient $\Lambda^q=A/(\Phi-q)$. Up to isomorphism, the algebra $\Lambda^q$ is independent of the ordering \cite[Theorem 1.4]{CBShaw}. 

\begin{thm} \emph{(\cite[6.7.1.]{VanDenBergh})} 
\label{Thm:QStruct}
 The algebra $A$ has a quasi-Hamiltonian structure given by 
\begin{equation}\label{Eq:PP}
 \PP=\frac{1}{2}\sum_{a\in \bar{Q}} \epsilon(a) (1+a^*a)\frac{\partial}{\partial a} 
\frac{\partial}{\partial a^*} \, - \,\frac12 \sum_{a<b\in \bar{Q}} \left(\frac{\partial}{\partial a^*}a^*-a
\frac{\partial}{\partial a}\right)\left(\frac{\partial}{\partial b^*}b^*-b\frac{\partial}{\partial b}\right) 
\,,
\end{equation}
and the multiplicative moment map given by \eqref{pphi}. 
\end{thm}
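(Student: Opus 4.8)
The plan is to read \eqref{Eq:PP} as the output of Van den Bergh's \emph{fusion} construction rather than to verify the quasi-Hamiltonian axioms by brute force. The shape of $\PP$ points to this: the first sum $\frac12\sum_a\epsilon(a)(1+a^*a)\frac{\partial}{\partial a}\frac{\partial}{\partial a^*}$ is a sum of \emph{local} bivectors, one per arrow of $\bar Q$, while the sum over pairs $a<b$ is assembled entirely from the gauge double derivations $\frac{\partial}{\partial a^*}a^*-a\frac{\partial}{\partial a}$, which on representation spaces generate the $\Gl_\alpha$-action. Cross-terms of exactly this form are what the fusion of two legs at a common vertex produces, so I would expect the full structure on $A$ to arise by fusing elementary one-arrow models.

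First I would settle the base case of a single arrow. For one arrow $a$ I take the localised two-generator algebra $A_a=\CC\langle a,a^*\rangle[(1+aa^*)^{-1}]$ over $\CC e_i\oplus\CC e_j$, the local bivector $\PP_a=\frac12\epsilon(a)(1+a^*a)\frac{\partial}{\partial a}\frac{\partial}{\partial a^*}$, and moment-map components $(1+aa^*)$ and $(1+a^*a)^{-1}$ at the two vertices it connects. Using Proposition~\ref{Prop:BrQ} I would write $\dgal{-,-}_{\PP_a}$ explicitly on $a$ and $a^*$, and then check the two defining conditions of Section~\ref{ss:doubleqP}: that $\{\PP_a,\PP_a\}_{\operatorname{SN}}$ equals $\frac16\sum_iE_i^3$ modulo graded commutators, and that these components satisfy the moment-map identity. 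In this minimal localised free algebra both are short, self-contained computations.

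Next I would invoke the fusion lemma of \cite{VanDenBergh}: fusing two idempotents of a quasi-Hamiltonian $B$-algebra produces another quasi-Hamiltonian algebra, whose bivector is the original one plus a correction quadratic in the gauge derivations of the two fused legs, and whose moment map multiplies the two corresponding components. I would then present $A$ as an iterated fusion of the disjoint union of the one-arrow algebras $A_a$, collapsing all copies of each vertex onto the single idempotent $e_i$. The local $\PP_a$ add up to the first sum of \eqref{Eq:PP}, and the accumulated fusion corrections, taken in the fixed order $<$, assemble into the second sum $-\frac12\sum_{a<b}(\cdots)(\cdots)$.

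Finally, the product of the components $(1+aa^*)^{\epsilon(a)}$ in the chosen order is exactly the moment map $\Phi=\prod_a(1+aa^*)^{\epsilon(a)}$ of \eqref{pphi}, and the moment-map identity is preserved under fusion; I would close by recording that a change of the order $<$ conjugates $\Phi$ and leaves $(\PP,\Phi)$ quasi-Hamiltonian, matching the order-independence of $\Lambda^q$. The main obstacle is the bookkeeping in the fusion step: one must check that the cross-terms produced by successive fusions combine with the correct signs $\epsilon(a)$ and in the correct order to reproduce precisely the antisymmetric double sum, with no surviving diagonal ($a=b$) contribution. The one genuinely delicate computation is the fusion lemma itself---that adding the correction bivector keeps the Schouten--Nijenhuis defect equal to $\frac16\sum_iE_i^3$---which is where a direct, unstructured verification of the theorem would bog down, and which I would either carry out carefully once in the two-leg case or simply cite from \cite{VanDenBergh}.
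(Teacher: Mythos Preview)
Your proposal is correct and follows exactly the route the paper points to: the theorem is quoted without proof from \cite[6.7.1]{VanDenBergh}, and the paper later remarks explicitly that ``a bivector $\PP$ can be obtained by fusion, see \cite[Proof of Theorem 6.7.1]{VanDenBergh}'', which is precisely the construction you outline (one-arrow base case, then iterated fusion of idempotents producing the cross-terms in $F_aF_b$ and multiplying the moment-map factors). There is nothing to add.
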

Note that the corresponding double-Poisson bracket is defined also on $\CC\bar{Q}$, but the elements $1+aa^*$ need to be invertible to define the moment map $\Phi$. 

The corresponding double quasi-Poisson bracket on $\CC\bar{Q}$ is calculated as follows.  

\begin{prop}\label{Pr:dbr} Suppose that the arrows of $\bar{Q}$ are ordered in such a way that $a<a^*<b<b^*$ for any $a<b\in Q$. Let $\dgal{-,-}$ be the double bracket associated to the bivector \eqref{Eq:PP}. 
Then one has:
 \begin{subequations}
       \begin{align}
\dgal{a,a}\,=\,&\frac{1}{2}\epsilon(a)\left( a^2\otimes e_{t(a)}- e_{h(a)}\otimes a^2 \right) \qquad (a\in\bar{Q})\,, \label{loop}\\
\dgal{a,a^*}\,=\,&e_{h(a)}\otimes e_{t(a)}
+\frac{1}{2} a^*a\otimes e_{t(a)} \nonumber\\
&+\frac{1}{2} e_{h(a)}\otimes aa^*
+\frac{1}{2} (a^*\otimes a-a\otimes a^*)\delta_{h(a),t(a)}\qquad (a\in Q)\,, \label{aast}\\
\dgal{a,b}\,=\,&\frac{1}{2}e_{h(a)}\otimes ab +\frac{1}{2} ba\otimes e_{t(a)} \nonumber\\\label{a<b}
&-\frac{1}{2}(b\otimes a) \delta_{h(a),h(b)}-\frac{1}{2}(a\otimes b) \delta_{t(a),t(b)}\qquad (a,b\in\bar{Q}\,, \ a<b\,,\ b\ne a^*)\,.
\end{align}
  \end{subequations}
\end{prop}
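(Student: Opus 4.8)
The plan is to compute the double bracket $\dgal{-,-}_{\PP}$ directly from the explicit bivector $\PP$ in \eqref{Eq:PP} using the formula of Proposition \ref{Prop:BrQ} with $n=2$. For a product $Q=\delta_1\delta_2$ of two double derivations, that proposition gives $\dgal{x,y}_Q = \dgal{x,y}_Q^{\widetilde{\ }} - \tau_{(12)}\dgal{y,x}_Q^{\widetilde{\ }}$ (the $n=2$ case, where the cyclic antisymmetrisation reduces to the two terms $i=0,1$), with $\dgal{x,y}_Q^{\widetilde{\ }}=\delta_2(y)'\delta_1(x)''\otimes \delta_1(x)'\delta_2(y)''$. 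Since $\PP$ is an explicit $B$-linear combination of monomials $\frac{\partial}{\partial a}\frac{\partial}{\partial a^*}$ and products of the ``rotation'' derivations $\frac{\partial}{\partial a^*}a^*-a\frac{\partial}{\partial a}$, the strategy is to apply this formula term by term, evaluating each $\frac{\partial}{\partial c}$ on the generators $a,a^*,b,b^*$ via the defining rule \eqref{Eq:Dba}, namely $\frac{\partial c}{\partial c}=e_{t(c)}\otimes e_{h(c)}$ and $0$ on the other generators, and remembering the inner bimodule structure that governs how the coefficients $1+a^*a$ and the factors $a^*,a$ sit inside the tensor slots.

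The key computational steps, carried out one identity at a time, are as follows. First I would fix two generators $x,y$ among $\{a,a^*,b,b^*\}$ and determine which summands of $\PP$ can possibly contribute: a term $\frac{\partial}{\partial c}\frac{\partial}{\partial d}$ contributes to $\dgal{x,y}$ only when $\{c,d\}$ meets $\{x,y\}$ appropriately, so most of the sum drops out by the $\delta$-support of \eqref{Eq:Dba}. For \eqref{loop} I take $x=y=a$, where only the diagonal term $\frac{\partial}{\partial a}\frac{\partial}{\partial a^*}$ (and its partner with $a\leftrightarrow a^*$) and the rotation terms involving $a$ survive; the quadratic pieces $a^2$ arise precisely from the $a\frac{\partial}{\partial a}$ halves of the rotation derivations acting twice. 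For \eqref{aast} I set $x=a$, $y=a^*$: here the first sum in $\PP$ produces the leading $e_{h(a)}\otimes e_{t(a)}$ together with the half-terms $\tfrac12 a^*a\otimes e_{t(a)}$ and $\tfrac12 e_{h(a)}\otimes aa^*$ (the factor $1+a^*a$ being responsible for the presence of both the unit and the quadratic contributions), while the Kronecker term $\delta_{h(a),t(a)}$ is a loop correction coming from the second (rotation) sum and only appears when $a$ is a loop. For \eqref{a<b} I take distinct $a<b$ with $b\ne a^*$: now the first sum of $\PP$ cannot contribute (its derivations pair an arrow with its own star), so the entire answer comes from the rotation product $\bigl(\frac{\partial}{\partial a^*}a^*-a\frac{\partial}{\partial a}\bigr)\bigl(\frac{\partial}{\partial b^*}b^*-b\frac{\partial}{\partial b}\bigr)$, and expanding the four cross terms yields the two outer-structure contributions $\tfrac12 e_{h(a)}\otimes ab$, $\tfrac12 ba\otimes e_{t(a)}$ and the two head/tail-coincidence corrections weighted by $\delta_{h(a),h(b)}$ and $\delta_{t(a),t(b)}$.

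Throughout, I would use the general double-bracket symmetry $\dgal{x,y}=-\dgal{y,x}^{\circ}$ and $B$-linearity to cut the work roughly in half and to cross-check signs, and I would use the ordering hypothesis $a<a^*<b<b^*$ to pin down unambiguously which derivation plays the role of $\delta_1$ versus $\delta_2$ in each product, since the formula in Proposition \ref{Prop:BrQ} is not symmetric in the two factors and the sign $(-1)^{(n-1)i}$ and the $\epsilon(a)$ factors must be tracked carefully. The main obstacle is precisely this bookkeeping: correctly interpreting the inner versus outer bimodule structures when the derivations carry the algebra-element coefficients $a,a^*$ and $1+a^*a$, and making sure the rotation derivations $\frac{\partial}{\partial a^*}a^*-a\frac{\partial}{\partial a}$ are expanded with the coefficients landing in the correct tensor leg before applying $\mu$. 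Once a single representative computation (say \eqref{aast}, which exhibits all the phenomena) is done cleanly, the remaining identities follow by the same mechanical expansion, and I would present one case in full and indicate that the others are entirely analogous.
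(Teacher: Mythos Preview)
Your approach is correct and essentially identical to the paper's: both compute $\dgal{-,-}_{\PP}$ by direct application of Proposition~\ref{Prop:BrQ}, first isolating the few summands of $\PP$ that can act nontrivially on the chosen pair of generators and then expanding via the formula $\dgal{a,b}_{\delta_1\delta_2}=\delta_2(b)'\delta_1(a)''\otimes\delta_1(a)'\delta_2(b)''-\delta_1(b)'\delta_2(a)''\otimes\delta_2(a)'\delta_1(b)''$. The paper carries out \eqref{loop} for $a\in Q$ in full (there the first sum in $\PP$ actually contributes nothing, and the sole surviving piece is the rotation term $\tfrac12\bigl(a\tfrac{\partial}{\partial a}\bigr)\bigl(\tfrac{\partial}{\partial a}a\bigr)$ coming from $F_aF_{a^*}$) and declares the remaining identities similar, exactly as you propose to do with \eqref{aast} as the representative case.
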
 
All other brackets are obtained by using $\dgal{a,b}=-\dgal{b,a}^\circ$, with 
$(c'\otimes c'')^\circ=c''\otimes c'$. Note that \eqref{loop} is zero unless $t(a)=h(a)$ (i.e. $a$ is a loop), and \eqref{a<b} is zero unless $a$ and $b$ share a vertex. 

\medskip

\begin{proof}
We give a proof for \eqref{loop} with $a\in Q$, other formulas are similar. First, recall that if $\delta_1,\delta_2\in D_{A/B}$, then the double bracket associated with $\delta_1\delta_2\in (D_BA)_2$ is given in Proposition \ref{Prop:BrQ}:
\begin{equation} \label{Eq:dgal}
\dgal{a, b}_{\delta_1 \delta_2}=\delta_2(b)'\delta_1(a)''\otimes \delta_1(a)'\delta_2(b)''-\delta_1(b)'\delta_2(a)''\otimes \delta_2(a)'\delta_1(b)''\,.
\end{equation}
When calculating $\dgal{a,a}_{\PP}$, the only nonzero contribution comes from the bivector
\begin{equation*}
 \PP_{a}=\frac{1}{2}\left(a\frac{\partial}{\partial a}\right)\left(\frac{\partial}{\partial a}a\right)\,, 
\end{equation*}
which we write as
\begin{equation}
 \PP_{a}=\frac{1}{2}U^-U^+\,,\qquad U^+=\frac{\partial}{\partial a}a, \quad U^-=a\frac{\partial}{\partial a}\,.\label{Eq:Pdd}
\end{equation}
Using the inner bimodule structure on $D_{A/B}$, we get 
\begin{equation} \label{Eq:UU}
  \begin{aligned}
U^+(a)&=
e_{t(a)}a\otimes e_{h(a)}=a\otimes e_{h(a)} \,,\\
U^-(a)&=
e_{t(a)}\otimes ae_{h(a)}=e_{t(a)}\otimes a\,.
  \end{aligned}
\end{equation} 
Hence, combining this with \eqref{Eq:dgal} we find:  
\begin{equation*}  \label{Eq:dgalfin}
  \begin{aligned}
\dgal{a,a}_{\PP}=\dgal{a,a}_{\PP_a}=&
\frac{1}{2} U^+(a)'U^-(a)''\otimes U^-(a)'U^+(a)'' 
-\frac{1}{2} U^-(a)'U^+(a)''\otimes U^+(a)'U^-(a)'' \\
&=\frac{1}{2}  a^2\otimes e_{t(a)}e_{h(a)} 
-\frac{1}{2} e_{t(a)}e_{h(a)}\otimes a^2\,.
  \end{aligned}
\end{equation*} 
Since $e_{t(a)}e_{h(a)}=e_{t(a)}=e_{h(a)}$ if $a$ is a loop and zero otherwise,  we obtain \eqref{loop} for $a\in Q$. Similar calculation for $\dgal{a^*, a^*}_{\PP}$ leads to the same result with the overall minus, which explains $\epsilon(a)$ in \eqref{loop}.  
\end{proof}

\bigskip

For any $a\in A$, define $\tr (a)=\sum_{i=1}^{|\alpha|} a_{ii}$; this is a $\Gl_\alpha$-invariant function on $\Rep_B(A, \alpha)$. We have the following useful formula \cite[Proposition 7.7.3]{VanDenBergh}:
\begin{equation}\label{trace}
\{\tr (a), \tr (b)\}=\tr \{a, b\}\,.
\end{equation}
Here the bracket on the left is the one induced on $\mathcal O (\Rep_B(A, \alpha))$ by \eqref{derr}, while $\{a,b\}$ in the right-hand side stands for the bracket on $A$ obtained from the double bracket:  
\begin{equation}\label{sbra}
\{a,b\}=m\circ \dgal{a, b}=\dgal{a,b}'\dgal{a,b}''\,.  
\end{equation}

\bigskip

We finish this subsection with two remarks. The first remark is that a total ordering on $\bar{Q}$ is not necessary to define a quasi-Hamiltonian structure on $A$. Indeed, a bivector $\PP$ can be obtained by fusion, see \cite[Proof of Theorem 6.7.1]{VanDenBergh}. The construction only requires to order arrows that start at any given vertex $i$, and the resulting bivector $\PP$ can be written as
\begin{equation}\label{Eq:PPi}
 \PP=\frac{1}{2}\sum_{a\in \bar{Q}} \epsilon(a) (1+a^*a)\frac{\partial}{\partial a} 
\frac{\partial}{\partial a^*} \,  - \, \frac12 \sum_{i\in I} \,  \sum_{\genfrac{}{}{0pt}{2}{a<b\in \bar{Q}}{t(a)=t(b)=i}} F_aF_b
\,,
\end{equation}
where 
\begin{equation*}
F_a=\frac{\partial}{\partial a^*}a^*-a
\frac{\partial}{\partial a}\,.
\end{equation*}
The moment map $\Phi$ is also well-defined, since each $\Phi_i=e_i \Phi e_i$ can be written as
\begin{equation}\label{pphii}
\Phi_i=\prod_{a\in \bar{Q}\,,\ t(a)=i} (e_i+aa^*)^{\epsilon(a)}\,,
\end{equation}
so the order of the factors is only needed to be prescribed at each vertex.

Another remark is about a slight modification of Theorem \ref{Thm:QStruct} which will be useful later. Let $S\subset \bar{Q}$ such that for all $a\in S$ we have $a^*\in S$. Write $1_S:\bar{Q}\to\{0,1\}$ for the characteristic function of the subset $S$; we have $1_S(a)=1_S(a^*)$ for all $a\in\bar{Q}$.

\begin{thm}
\label{Thm:QStructloc}
 Let $A_S$ be obtained from $\CC \bar{Q}$ by inverting all elements $(1_S(a)+aa^*)_{a\in \bar{Q}}$. The algebra $A_S$ has a quasi-Hamiltonian structure given by 
\begin{equation}\label{Eq:PPloc}
 \PP_S=\frac{1}{2}\sum_{a\in \bar{Q}} \epsilon(a) (1_S(a)+a^*a)\frac{\partial}{\partial a} 
\frac{\partial}{\partial a^*} \, - \,\frac12 \sum_{a<b\in \bar{Q}} \left(\frac{\partial}{\partial a^*}a^*-a
\frac{\partial}{\partial a}\right)\left(\frac{\partial}{\partial b^*}b^*-b\frac{\partial}{\partial b}\right) 
\,,
\end{equation}
and the multiplicative moment map $\Phi_S=(\Phi_{S,i})_{i\in I}$ given by
\begin{equation}\label{pphiloc}
\Phi_{S,i}=\prod_{a\in \bar{Q}\,,\ t(a)=i} (1_S(a)e_i+aa^*)^{\epsilon(a)}\,.
\end{equation}
\end{thm}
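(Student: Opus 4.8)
The plan is to imitate Van den Bergh's proof of Theorem~\ref{Thm:QStruct} (his 6.7.1), exploiting the fact that $\PP_S$ and $\Phi_S$ differ from $\PP$ and $\Phi$ only by the replacement of the local unit $e_i$ by $1_S(a)e_i$; concretely, for an arrow $a\in S$ nothing changes at all, while for $a\notin S$ the constant is set to $0$, so that the inverted element $e_i+aa^*$ is replaced by $aa^*$, which forces $a$ and $a^*$ to be invertible in $A_S$. Since Theorem~\ref{Thm:QStruct} is proved by \emph{fusion} of elementary quasi-Hamiltonian algebras attached to the individual arrows, I would first isolate exactly which part of that argument is sensitive to this change.

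The fusion step produces $\PP$ from the single-arrow blocks by adding the correction term $-\tfrac12\sum_{a<b}F_aF_b$ and by multiplying, at each vertex $i$, the moment-map factors of the blocks meeting $i$. Neither operation sees the constant: the operators $F_a=\tfrac{\partial}{\partial a^*}a^*-a\tfrac{\partial}{\partial a}$ are literally unchanged, and the product structure of the moment map is the same. Hence the fusion correction in \eqref{Eq:PPloc} and the product \eqref{pphiloc} come out automatically (in the per-vertex form \eqref{Eq:PPi}, which agrees with \eqref{Eq:PPloc} under the ordering convention of Proposition~\ref{Pr:dbr}), and the whole statement reduces to checking that each single-arrow block is quasi-Hamiltonian. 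For $a\in S$ the block is exactly Van den Bergh's, so there is nothing to prove; the entire content of the theorem is concentrated in the block attached to an arrow $a\notin S$.

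So the key step is to verify, for a single arrow $a\colon i\to j$ with $a,a^*$ invertible, that the algebra $\CC\langle a,a^*\rangle[(aa^*)^{-1},(a^*a)^{-1}]$ equipped with the bivector $\tfrac12 a^*a\,\tfrac{\partial}{\partial a}\tfrac{\partial}{\partial a^*}-\tfrac12 aa^*\,\tfrac{\partial}{\partial a^*}\tfrac{\partial}{\partial a}$ and the elements $\Phi_i=aa^*$, $\Phi_j=(a^*a)^{-1}$ is a differential double quasi-Poisson algebra with this multiplicative moment map; that is, $\{P,P\}_{\operatorname{SN}}\equiv\tfrac16(E_i^3+E_j^3)$ modulo commutators and $\dgal{\Phi_k,-}=\tfrac12(\Phi_kE_k+E_k\Phi_k)$ for $k=i,j$. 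I would run this as a direct computation parallel to Van den Bergh's for $e_i+aa^*$: the constant only ever enters additively and is annihilated by the double derivations, so the triple-bracket identity is essentially unchanged, while the moment-map axiom is the one place where replacing $e_i$ by $0$ must be tracked. Alternatively, and more slickly, I would try the substitution $a^*\mapsto a^*-a^{-1}$ (legitimate once $a$ is invertible), which sends $e_i+aa^*\mapsto aa^*$ and $e_j+a^*a\mapsto a^*a$, matching $\Phi_i,\Phi_j$ to Van den Bergh's block exactly, and then check that it intertwines the two bivectors.

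The main obstacle, in either route, is the same: controlling the inverse $(a^*a)^{-1}$ in $\Phi_j$. Verifying $\dgal{(a^*a)^{-1},-}=\tfrac12(\Phi_jE_j+E_j\Phi_j)$ requires expressing the bracket of the inverse through $\dgal{a^*a,-}$ by applying the Leibniz rule to $(a^*a)(a^*a)^{-1}=e_j$, and then recombining the resulting terms against the modified coefficient $a^*a$ (rather than $e_j+a^*a$) appearing in $\PP_S$. For the substitution route the obstacle reappears as the need to confirm that $a^*\mapsto a^*-a^{-1}$ is genuinely an isomorphism of \emph{double quasi-Poisson} algebras and not merely of associative algebras, which is precisely the same bracket computation phrased differently. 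Once this single block is settled, invoking the fusion lemma of \cite{VanDenBergh} assembles $\PP_S$ and $\Phi_S$ and completes the proof.
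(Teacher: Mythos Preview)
Your proposal is correct and matches the paper's proof almost exactly. The paper likewise notes that the fusion argument of Theorem~\ref{Thm:QStruct} goes through unchanged, and for the ``slick'' alternative it uses precisely your substitution (written there as $\hat a=a^{-1}+a^*$, i.e.\ $a^*\mapsto a^*-a^{-1}$), carrying it out by computing the induced transformation of the double derivations $\frac{\partial}{\partial a}\mapsto\frac{\partial}{\partial a}-a^{-1}\frac{\partial}{\partial\hat a}a^{-1}$, $\frac{\partial}{\partial a^*}\mapsto\frac{\partial}{\partial\hat a}$ and substituting into $\PP$; this is exactly the bracket compatibility you flagged as the remaining obstacle.
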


\begin{proof}
In the case $S=Q$ this is Theorem \ref{Thm:QStruct}, and the general case can be proved by the same method. Alternatively, $A_S$ can be obtained from $A$ by inverting arrows in ${Q}$. Indeed, take $a: i\to j$ in $Q\setminus S$ and adjoin to $A$ the formal inverse of $a$, i.e. an element ${a}^{-1}$ such that $e_j{a}^{-1}={a}^{-1}e_i={a}^{-1}$, $a{a}^{-1}=e_i$, ${a}^{-1} a=e_j$. Set $\hat{a}=a^{-1}+a^*$, then $e_i+aa^*=a\hat{a}$ and $e_j+a^*a=\hat{a}a$. Define $\hat{\hat{a}}=a$ for $a\in Q\setminus S$, while for $a\in S$ set $\hat{a}:=a^*$. Therefore, for all $a\in Q$ the elements $1_S(a)e_{t(a)}+a\hat{a}$ are invertible in $A_S$. Inside $A_S$ we have a subalgebra (over $B$), isomorphic to $\CC\bar{Q}$ and generated by all $a, \hat{a}$ with $a\in Q$. One can now define double derivations $\frac{\partial}{\partial a}$ and $\frac{\partial}{\partial \hat{a}}$ on this subalgebra, in the same way as we define $\frac{\partial}{\partial a}$, $\frac{\partial}{\partial {a^*}}$ on $\CC\bar{Q}$ in \eqref{Eq:Dba}. One can think of this as a change of variables from $a, a^*$ to $a, \hat{a}$; then a simple calcualtion shows that under this change of variables,  
\begin{equation*}
 \frac{\partial}{\partial a}\mapsto \frac{\partial}{\partial a}-a^{-1}\frac{\partial}{\partial\hat{a}}a^{-1}\,,\quad \frac{\partial}{\partial a^*}\mapsto\frac{\partial}{\partial \hat{a}}\,. 
\end{equation*}
Substituting this into \eqref{Eq:PP} and then renaming $\hat{a}$ as $a^*$, we obtain (modulo graded commutators) the bivector \eqref{Eq:PPloc}, as claimed.
\end{proof}
Explicit formulas for the double bracket on $A_S$ are almost the same, the only difference is \eqref{aast} which gets replaced by
\begin{equation}\label{aastloc}
\dgal{a,a^*}\,=\, 1_S(a) e_{h(a)}\otimes e_{t(a)}
+\frac{1}{2} a^*a\otimes e_{t(a)} 
+\frac{1}{2} e_{h(a)}\otimes aa^*
+\frac{1}{2} (a^*\otimes a-a\otimes a^*)\delta_{h(a),t(a)}\,.
\end{equation}

\subsection{Multiplicative quiver varieties}\label{Sec:mqv}
Let us turn now to representation spaces. Below we will always work in a relative setting, and from now on we will drop the subscript $B$ from the notation. For instance, given $\alpha\in\N^I$, a representation $\Rep(\CC\bar{Q}, \alpha)$ will always mean $\Rep_B(\CC\bar{Q}, \alpha)$, where $B=\oplus_i \CC e_i$, with $e_i$ acting as the identity on $V_i=\CC^{\alpha_i}$. For each arrow $a\in\bar{Q}$, we have $a=e_{t(a)}ae_{h(a)}$, therefore, $a$ is represented by a matrix with at most one non-zero block of size $\alpha_{t(a)}\times \alpha_{h(a)}$. Therefore, this can be viewed as a \emph{quiver representation}, consisting of vector spaces $\VV_i=\CC^{\alpha_i}$, $i\in I$ and linear maps $X_a: \VV_{h(a)}\to \VV_{t(a)}$ for each $a\in Q$.\footnote{If this looks to the reader as a representation of the opposite quiver, that is because of our convention for composing arrows, see Remark \ref{rem2.1}.}  With this interpretation, we have
\begin{equation}\label{xa}
X_a\in \Mat_{\alpha_{t(a)}, \alpha_{h(a)}} (\CC)\,, \qquad \Rep(\CC\bar{Q}, \alpha)\cong \prod_{a\in \bar{Q}} \Mat_{\alpha_{t(a)}, \alpha_{h(a)}} (\CC).
\end{equation}
Next, $\Rep(A, \alpha)$ is an affine open subset of $\Rep(\CC\bar{Q}, \alpha)$, so it is smooth. By Theorems \ref{Thm:Poiss}, \ref{Thm:QStruct}, this is a quasi-Hamiltonian manifold, with the quasi-Poisson bracket determined by $\PP$ and with a multiplicative moment map $\mathcal{X}(\Phi)$. The representation space $\Rep(\Lambda^q, \alpha)$ is a level set $\{\Phi=q\}$ of the momentum map, so it is a closed affine subvariety in $\Rep(A, \alpha)$. Let $q^{\alpha}=\prod_{i\in I} q^{\alpha_i}$. Then $\Rep(\Lambda^q, \alpha)$ is empty unless $q^\alpha=1$ \cite[Lemma 1.5]{CBShaw}. Isomorphism classes of representations correspond to orbits under the group 
\begin{align}\label{galpha}
G(\alpha)=\bigg(\prod_{i\in I} \Gl_{\alpha_i}\bigg)/\,{\CC}^\times\,,
\end{align}
acting by conjugation. Here ${\CC}^\times$ denotes the diagonal subgroup of scalar matrices. Semi-simple representations correspond to closed orbits.

Consider the affine variety $\mathcal{S}_{\alpha, q}:=\Rep(\Lambda^q, \alpha)//G(\alpha)$, whose points correspond to semi-simple representations of $\Lambda^{q}$ of dimension $\alpha$. We will mostly deal with the situation when $q$ and $\alpha$ are such that all representations in $\Rep(\Lambda^q, \alpha)$ are simple. In this case, we have the following result which is a combination of ~\cite[Theorems 1.8 \&1.10]{CBShaw} and \cite[Proposition 1.7]{VanDenBergh}.
\begin{thm} 
\label{dim}
Let $p(\alpha)=1+\sum_{a\in Q}\alpha_{t(a)}\alpha_{h(a)}-\alpha\cdot\alpha$, where $\alpha\cdot\alpha=\sum_{i\in I}\alpha_i^2$. Suppose that $\Rep(\Lambda^q, \alpha)$ is non-empty and all representations in $\Rep(\Lambda^q, \alpha)$ are simple. Then $\alpha$ is a positive root of $Q$ and $\Rep(\Lambda^q, \alpha)$ is a smooth affine variety of dimension $g+2p(\alpha)$, with $g=\dim G(\alpha)=\alpha\cdot\alpha-1$. The group $G(\alpha)$ acts freely on $\Rep(\Lambda^q, \alpha)$, so 
$\mathcal S_{\alpha, q}=\Rep(\Lambda^q, \alpha)/G(\alpha)$ is a Poisson manifold of dimension $2p(\alpha)$, obtained by quasi-Hamiltonian reduction.
\end{thm}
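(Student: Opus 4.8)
The plan is to assemble the statement from the quasi-Hamiltonian picture established above together with the representation-theoretic results of Crawley-Boevey and Shaw. The starting point is the identification $\Rep(\Lambda^q,\alpha)=\mathcal{X}(\Phi)^{-1}(q)$ as a closed subvariety of the smooth quasi-Hamiltonian manifold $\Rep(A,\alpha)$, which follows from Theorems \ref{Thm:Poiss} and \ref{Thm:QStruct}: the open subset $\Rep(A,\alpha)\subset\Rep(\CC\bar{Q},\alpha)$ carries the quasi-Poisson bracket determined by $\PP$ together with the group-valued moment map $\mathcal{X}(\Phi)$ taking values in the block-invertible matrices $\prod_i\Gl_{\alpha_i}$, and $q$ is a central value of that map.

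First I would treat freeness of the $G(\alpha)$-action. For a simple representation $\varrho$, Schur's lemma shows that the only endomorphisms commuting with $\varrho(A)$ are scalars, so the stabiliser of $\varrho$ in $\prod_i\Gl_{\alpha_i}$ is exactly the diagonal $\CC^\times$. Passing to the quotient group $G(\alpha)=(\prod_i\Gl_{\alpha_i})/\CC^\times$ of \eqref{galpha}, the action on $\Rep(\Lambda^q,\alpha)$ is therefore free; since moreover simple representations have closed orbits, the action is proper and the set-theoretic quotient $\Rep(\Lambda^q,\alpha)/G(\alpha)$ agrees with the affine GIT quotient $\mathcal{S}_{\alpha,q}$. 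That $\alpha$ must be a positive root of $Q$ as soon as a simple representation of $\Lambda^q$ of that dimension exists is precisely the content of \cite[Theorem 1.10]{CBShaw}, which I would invoke directly rather than reprove.

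Next comes smoothness and the dimension count. The differential of the moment map is governed by the infinitesimal action: at a simple representation the only infinitesimal symmetry is central, so $\mathrm{d}\mathcal{X}(\Phi)$ is as surjective as it can be, its image being constrained only by the single relation $q^\alpha=1$ forced by \cite[Lemma 1.5]{CBShaw}. Hence $q$ is a regular value in the relevant sense and $\Rep(\Lambda^q,\alpha)=\mathcal{X}(\Phi)^{-1}(q)$ is smooth of codimension $\dim(\prod_i\Mat_{\alpha_i})-1=\alpha\cdot\alpha-1=g$ inside $\Rep(A,\alpha)$. Using $\dim\Rep(A,\alpha)=\dim\Rep(\CC\bar{Q},\alpha)=2\sum_{a\in Q}\alpha_{t(a)}\alpha_{h(a)}$ from \eqref{xa}, this yields
$$\dim\Rep(\Lambda^q,\alpha)=2\sum_{a\in Q}\alpha_{t(a)}\alpha_{h(a)}-(\alpha\cdot\alpha-1)=g+2p(\alpha),$$
which is the asserted dimension; the bookkeeping here is exactly \cite[Theorem 1.8]{CBShaw}.

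Finally, the Poisson structure on $\mathcal{S}_{\alpha,q}$ follows from the quasi-Hamiltonian reduction recalled in \S\ref{QPM}: since $q$ is central, its conjugacy class is the single point $\{q\}$, and for the free and proper $G(\alpha)$-action the reduced space $\mathcal{X}(\Phi)^{-1}(q)/G(\alpha)$ is a Poisson manifold of dimension $\dim\Rep(\Lambda^q,\alpha)-\dim G(\alpha)=2p(\alpha)$; this is \cite[Proposition 1.7]{VanDenBergh}. I expect the main obstacle to be matching the two frameworks at the level of the moment map, namely checking that surjectivity of $\mathrm{d}\mathcal{X}(\Phi)$ away from the redundancy $q^\alpha=1$, combined with freeness of the action, genuinely produces smoothness of the level set and the quasi-Hamiltonian reduction in the \emph{group}-valued setting, rather than only the naive dimension count that works in the additive case. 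The remaining inputs—positivity of the root and the precise codimension—are then honest citations to Crawley-Boevey and Shaw.
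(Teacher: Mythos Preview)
Your proposal is correct and is essentially the same as the paper's approach: the paper states this theorem without proof, simply identifying it as ``a combination of \cite[Theorems 1.8 \& 1.10]{CBShaw} and \cite[Proposition 1.7]{VanDenBergh}'', and you have invoked precisely these three results and unpacked how they fit together. Your expansion of the argument (Schur's lemma for freeness, the dimension count from \eqref{xa}, the reduction at the central value $q$) is exactly the content behind those citations.
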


The Poisson bracket on $\mathcal O(\mathcal S_{\alpha, q})=\mathcal O(\Rep(\Lambda^q, \alpha))^{G(\alpha)}$ is obtained from \ref{Prop:BrInduced}, \ref{Thm:QStruct}. Moreover, it follows from \cite[8.3.1]{VdBquasiSpaces} that this Poisson bracket is non-degenerate, so the variety $\mathcal S_{\alpha, q}$ is, in fact, symplectic. The varieties $\mathcal S_{\alpha, q}$ are sometimes referred to as \emph{multiplicative quiver varieties}, though we will reserve this name for a special case related to framed quivers, described below.

\medskip
  
Let $Q$ be an arbitrary quiver with the vertex set $I$. A \emph{framing} of $Q$ is a quiver $\widetilde{Q}$ that has one additional vertex, denoted $\infty$, and a number of arrows $i\to \infty$ from the vertices of $Q$ (multiple arrows are allowed). Given $\alpha\in \N^I$ and $q\in({\CC^\times})^I$, we extend them from $Q$ to $\widetilde{Q}$ by putting $\alpha_\infty=1$ and $q_\infty=q^{-\alpha}$. Thus, we put
\begin{align}
\label{frame}
\aalpha=(1, \alpha)\,, \qquad \qq=q^{-\alpha}e_{\infty}+\sum_{i\in I} q_ie_i\,.
\end{align}
Consider the representation space $\Rep(\Lambda^{{\qq}}, \aalpha)$ for the multiplicative preprojective algebra of $\widetilde{Q}$. The quotients
\begin{align}
\label{qvar}
\mathcal M_{\alpha, q} (Q):=\Rep(\Lambda^{\qq}, \aalpha)//G(\aalpha)
\end{align} 
are called {multiplicative quiver varieties}. Note that since $\alpha_\infty=1$, we have 
\begin{align}
\label{gl}
G(\aalpha)=\bigg(\prod_{i\in I\sqcup\{\infty\}} \Gl_{\alpha_i} \bigg)\big/{\CC}^\times \cong \prod_{i\in I} \Gl_{\alpha_i}=\Gl_{\alpha}\,.
\end{align}

We say that $q=\sum_{i\in I} q_ie_i$ is \emph{regular} if $q^\alpha\ne 1$ for any root $\alpha$ of the quiver $Q$. We have the following result, cf. \cite[Theorem 2.8]{Nak94}, \cite[Proposition 3]{BCE}. 

\begin{prop}\label{mqvar} Choose an arbitrary framing $\widetilde{Q}$ of $Q$ and let $\aalpha$ and $\qq$ be as in~\eqref{frame}. If $q$ is regular, then every module of dimension $\aalpha$ over the multiplicative preprojective algebra $\Lambda^\qq$ is simple. Hence, the group $\Gl_\alpha$ acts freely on $\Rep(\Lambda^{{\qq}}, \aalpha)$ and the multiplicative quiver variety $\mathcal M_{\alpha, q} (Q)$ is smooth.
\end{prop}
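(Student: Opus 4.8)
The plan is to isolate the essential claim — that every $\Lambda^\qq$-module of dimension $\aalpha$ is simple — and then deduce the geometric assertions from it. Once simplicity is known, I would apply Theorem \ref{dim} to the framed quiver $\widetilde{Q}$ with the data $(\aalpha,\qq)$: its hypothesis ``all representations are simple'' is then satisfied, and it yields at once that $\Rep(\Lambda^\qq,\aalpha)$ is a smooth affine variety, that $G(\aalpha)$ acts freely on it, and that the quotient is smooth. Freeness may also be seen directly: for a simple module $M$ one has $\operatorname{End}(M)=\CC$ by Schur's lemma (we are over $\CC$), so the stabiliser of the corresponding point is $\operatorname{Aut}(M)/\CC^\times=\CC^\times/\CC^\times$, which is trivial; under the identification $G(\aalpha)\cong\Gl_\alpha$ of \eqref{gl} this is precisely the freeness of the $\Gl_\alpha$-action on $\Rep(\Lambda^\qq,\aalpha)$, whence $\mathcal M_{\alpha,q}(Q)$ is smooth. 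Note that the regularity of $q$ will be used only in the simplicity step.

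To prove simplicity I would argue by contradiction. Suppose $M$ is a $\Lambda^\qq$-module of dimension $\aalpha=(1,\alpha)$ fitting into a short exact sequence $0\to N\to M\to M/N\to 0$ with $N$ and $M/N$ both nonzero. Since $\aalpha_\infty=1$ and $\dim_\infty N+\dim_\infty(M/N)=1$, exactly one of $N,M/N$ has dimension $0$ at the framing vertex $\infty$; call it $T$. Then $T$ is a nonzero $\Lambda^\qq$-module with $\dim_\infty T=0$, so $\dim_I T\ne 0$. The key first step is to observe that $T$ is in fact a module over $\Lambda^{q}$ for the original quiver $Q$: because the vector space at $\infty$ vanishes on $T$, every path through $\infty$ acts as zero, so for each framing arrow $a$ (with $t(a)=i$, $h(a)=\infty$) the composite $aa^*$ vanishes and the corresponding factor $(e_i+aa^*)^{\epsilon(a)}$ in the moment map $\Phi_i$ of \eqref{pphii} reduces to $e_i$. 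Hence, at each $i\in I$, the relation $\Phi=\qq$ for $\widetilde{Q}$ restricts exactly to the moment map relation for $Q$ with value $q_i$, so $T$ is a $\Lambda^{q}$-module.

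Now I would choose a simple $\Lambda^{q}$-submodule $S\subseteq T$ (it exists since $T$ is finite-dimensional and nonzero) and set $\sigma=\dim S\in\N^I\setminus\{0\}$. By the results of Crawley-Boevey and Shaw, the dimension vector of a simple $\Lambda^{q}$-module is a positive root of $Q$, so $\sigma$ is a positive root; moreover the existence of $S$ forces $\Rep(\Lambda^{q},\sigma)$ to be non-empty, whence $q^\sigma=1$ by \cite[Lemma 1.5]{CBShaw}. This contradicts the regularity of $q$, which demands $q^\sigma\ne 1$ for every root $\sigma$ of $Q$. Therefore $M$ admits no proper nonzero submodule and is simple.

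I expect the main obstacle to lie in the second step: one must verify carefully that a $\Lambda^\qq$-sub\-module or quotient supported away from $\infty$ descends to a genuine $\Lambda^{q}$-module, i.e. that the moment map relations match vertex by vertex once the framing arrows are annihilated, and one must invoke the precise Crawley-Boevey--Shaw correspondence between simple modules and positive roots (together with the emptiness criterion $q^\sigma=1$) in order to turn $q^\sigma=1$ into a genuine contradiction with regularity rather than a vacuous statement.
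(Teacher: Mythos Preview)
Your proof is correct and follows essentially the same route as the paper's own argument: reduce to a nonzero (sub)quotient supported away from $\infty$, pass to a simple constituent, and invoke \cite[Lemma~1.5 \& Theorem~1.8]{CBShaw} to obtain $q^\sigma=1$ for a root $\sigma$ of $Q$, contradicting regularity. You spell out a few points the paper leaves implicit (the descent of a module with zero $\infty$-component to $\Lambda^q$, and the Schur-lemma argument for freeness), but the structure and key inputs are the same.
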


\begin{proof}
Let $V$ be a $\Lambda^{{\qq}}$-module of dimension $\aalpha=(1, \alpha)$. If $V$ is non-simple, then it has a proper submodule $U\subset V$. The dimension vector of $U$ is either of the form $(1, \beta)$ or $(0, \beta)$, for some $\beta\in \N^I$. In the latter case, by passing to a submodule, we may assume that $U$ is simple. But then $\beta$ must be a positive root of $Q$ and $q^\beta=1$, by \cite[Lemma 1.5~\&~Theorem 1.8]{CBShaw}. Therefore, $q$ cannot be regular. In the case when $\dim U=(1, \beta)$ we consider the quotient module $V/U$ and repeat the argument.  
\end{proof}

It follows that if $q$ is regular and $\mathcal M_{\alpha, q} (Q)\ne\emptyset$, then $\aalpha=(1, \alpha)$ is a positive root of $\widetilde Q$ and $\mathcal M_{\alpha, q} (Q)$ is a smooth affine variety of dimension $2p(\aalpha)$.   

\begin{rem} The varieties $\mathcal M_{\alpha, q} (Q)$ are the same as \emph{framed} multiplicative quiver varieties studied by Yamakawa \cite{Y} (with the zero stability parameter), see also the Appendix by Nakajima and Yamakawa in \cite{BEF}.    
\end{rem}


\section{Tadpole quiver} \label{tadpole} 

In this section we describe the way to obtain the trigonometric Ruijsenaars--Schneider system by a quasi-Hamiltonian reduction. The main results in this section are not new, see e.g. \cite{R88}, \cite{FockRosly}, \cite{Oblomkov}, \cite{FehKlim12}, but we provide self-contained proofs whcih will serve as a preparation for the later sections. We should stress that we focus on algebraic and geometric aspects, working over $\CC$. Therefore, we do not address more subtle questions about various real, compact and non-compact forms of the complexified system, see \cite{R88}, \cite{FehKlim12} and references therein. Note that the choice of a real form is crucial for studying the particle dynamics and scattering, cf. \cite{R88}.   
    
Let $Q$ be a tadpole quiver with vertices $\{\infty,0\}$ and two arrows, $x:0\to 0$ and
$v:0 \to \infty$. Let us write $y=x^*$, $w=v^*$ for the opposite arrows. 
We choose an ordering $x<y<v<w$ on $\bar{Q}$. As before, form an algebra $A$  by adjoining $\{(1+aa^*)^{-1}\}_{a\in\bar{Q}}$ to the path algebra $\CC\bar{Q}$. The quasi-Poisson bracket on $A$ is given in Proposition \ref{Pr:dbr}:
 \begin{subequations}
       \begin{align}
\dgal{x,x}\,=\,&\frac{1}{2}\left( x^2\otimes e_{0}- e_{0}\otimes x^2 \right)\,,\quad
\dgal{y,y}\,=\,\frac{1}{2}\left( e_0\otimes y^2- y^2\otimes e_0 \right)\,,\label{tadida}\\
\dgal{x,y}\,=\,&e_{0}\otimes e_{0}
+\frac{1}{2} yx\otimes e_{0} +\frac{1}{2} e_{0}\otimes xy
+\frac{1}{2} (y\otimes x-x\otimes y)\,, \label{tadidb}\\
\dgal{v,v}\,=\,&\dgal{w,w}=0\,,\quad \dgal{v,w}= e_\infty\otimes e_0+ \frac12 e_\infty \otimes vw+\frac12 wv\otimes e_0\,,\label{tadidc}\\
\dgal{x, v}\,=\,& \frac12 e_{0}\otimes xv-\frac12 x\otimes v\,,\quad \dgal{x, w}= \frac12 wx\otimes e_0-\frac12 w\otimes x\,,\label{tadidd}\\
\dgal{y, v}\,=\,& \frac12 e_{0}\otimes yv-\frac12 y\otimes v\,,\quad \dgal{y, w}= \frac12 wy\otimes e_0-\frac12 w\otimes y\,.\label{tadide} 
\end{align}
  \end{subequations}
If we further localise $A$ by adding $x^{-1}$, then we can replace $y$ by $z=y+x^{-1}$, and the brackets between $x, z$ are very similar, cf. Proposition \ref{Thm:QStructloc} and \eqref{aastloc}:  
\begin{equation}
\dgal{z,z}\,=\,\frac{1}{2}\left( e_0\otimes z^2- z^2\otimes e_0 \right)\label{tadidaz}\,,\quad
\dgal{x,z}\,=\, \frac{1}{2} zx\otimes e_{0} +\frac{1}{2} e_{0}\otimes xz
+\frac{1}{2} (z\otimes x-x\otimes z)\,.
\end{equation}

Let us calculate some further brackets. Let $\{-,-\}$ denote the bracket $A\times A\to A$ defined by \eqref{sbra}. This bracket is not anti-symmetric in general, but it satisfies Leibniz's rule in the second argument, and by \cite[2.4 \& Proposition 5.1.2]{VanDenBergh} $A$ is a left Loday algebra. I.e., we have the following identities:
\begin{equation}\label{loday}
\{a, bc\}=\{a, b\}c+b\{a,c\}\,,\qquad \{a, \{b, c\}\}=\{\{a, b\}, c\}+\{b, \{a, c\}\}\,.
\end{equation}

\begin{prop}\label{Prop:int}
We have the following identities in $A$ for all $a,b\ge 0$:
\begin{equation*}
\{x^a, x^b\}=0\,,\quad \{y^a, y^b\}=0\,, \quad \{(xy)^a, (xy)^b\}=0\,. 
  \end{equation*}
If we further localise $A$ on $x$, then we also have 
\begin{equation*}
\{z^a, z^b\}=0\,,\quad z=y+x^{-1}\,. 
  \end{equation*}
\end{prop}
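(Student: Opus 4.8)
The single bracket $\{-,-\}=m\circ\dgal{-,-}$ is not antisymmetric in general, but it obeys the Leibniz rule \eqref{loday} in its second slot; the strategy is to single out a class of elements on which it happens to be antisymmetric and then bootstrap with Leibniz. The engine I would use is the observation that, for $u\in e_0Ae_0$, if the double bracket $\dgal{u,u}$ lies in $\CC[u]\otimes\CC[u]$ (both tensor legs are polynomials in $u$, with $e_0$ playing the role of $1$), then $\{u^a,u^b\}=0$ for all $a,b\ge 0$. All four identities would then follow by checking this single hypothesis for $u=x,y,xy$ and, after localising, $u=z$.

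First I would record that the hypothesis propagates. Expanding $\dgal{u^a,u^b}$ by the second‑argument derivation rule $\dgal{a,bc}=b\dgal{a,c}+\dgal{a,b}c$ and by the first‑argument rule obtained from it together with $\dgal{a,b}=-\dgal{b,a}^\circ$ only ever multiplies the two tensor legs by powers of $u$ (on the outer, resp.\ inner, bimodule structure). Hence $\dgal{u^a,u^b}\in\CC[u]\otimes\CC[u]$ for all $a,b$ as soon as $\dgal{u,u}$ does. Since $\CC[u]$ is commutative, applying $m$ to $\dgal{u^a,u^b}=-\dgal{u^b,u^a}^\circ$ then yields genuine antisymmetry $\{u^a,u^b\}=-\{u^b,u^a\}$ on powers of $u$. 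In particular $\{u,u\}=0$, so by a one–line induction on $b$ using \eqref{loday} (with base case $\{u,e_0\}=0$ from $B$‑linearity) one gets $\{u,u^b\}=0$ for all $b$; antisymmetry upgrades this to $\{u^a,u\}=-\{u,u^a\}=0$, and a second induction on $b$ via $\{u^a,u^b\}=\{u^a,u\}u^{b-1}+u\{u^a,u^{b-1}\}$ finishes with $\{u^a,u^b\}=0$.

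It then remains to verify $\dgal{u,u}\in\CC[u]\otimes\CC[u]$ in each case. For $u=x,y$ this is immediate from \eqref{tadida} and for $u=z$ from \eqref{tadidaz}, since there $\dgal{u,u}$ already has both legs among powers of $u$. The one genuine computation --- and the main obstacle --- is the case $u=xy$: I would compute $\dgal{xy,xy}$ from the building blocks \eqref{tadida}--\eqref{tadidb} via the two derivation rules and check that, after the cross‑terms coming from $\dgal{x,y}$ cancel, one is left with $\dgal{xy,xy}=\big(xy+\tfrac12(xy)^2\big)\otimes e_0-e_0\otimes\big(xy+\tfrac12(xy)^2\big)$, whose legs lie in $\CC[xy]$. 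This cancellation is the crux; everything else is formal. (It is plausible that the special shape of $\dgal{xy,xy}$ reflects the role of $1+xy$ as a factor of the moment‑map component $\Phi_0$, which would give a structural explanation, but the direct calculation is the safest route.)
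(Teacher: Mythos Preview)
Your argument is correct and, at its core, the same as the paper's: reduce everything to the shape of $\dgal{u,u}$, verify the key case $\dgal{xy,xy}=(xy+\tfrac12(xy)^2)\otimes e_0-e_0\otimes(xy+\tfrac12(xy)^2)$ by direct computation, and then use that the two tensor legs stay in a commutative subalgebra so that $m\circ\dgal{-,-}$ becomes antisymmetric on powers of $u$ and Leibniz plus induction finish. The paper packages the bootstrap step as a slightly more abstract lemma (an allowed set $\mathcal E\subset A\otimes A$ with $(u^rEu^s)^\circ=u^sE'u^r$ and $m(\mathcal E)$ central for $u$), which for the tadpole specialises exactly to your criterion $\dgal{u,u}\in\CC[u]\otimes\CC[u]$; the extra generality is there only so the same lemma can be reused for the cyclic quiver, where $\dgal{x,x}$ involves $E_1=\sum_i e_{i+1}\otimes e_i$ and the legs no longer sit in a commutative ring.
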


\begin{prop}
\label{Prop:dBrTad} Let $A'$ denote the algebra $A$ localised on $x$, and $z=y+x^{-1}$. 
For any $a, b\ge 0$ we have
  \begin{subequations}
       \begin{align}
\{x^a, zx^b\}&=azx^{a+b} \mod{[A',A']}\,,\label{Eq:tadfg}\\
\{zx^a, zx^b\}&=\sum \limits_{r=1}^{a} zx^rzx^{a+b-r}- \sum \limits_{r=1}^{b} zx^rzx^{a+b-r} \mod{[A',A']}\,.\label{Eq:tadgg}
   \end{align}
  \end{subequations}
\end{prop}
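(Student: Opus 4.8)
The plan is to reduce everything to the single bracket $\{-,-\}=m\circ\dgal{-,-}$ of \eqref{sbra} evaluated on the two generators $x,z$, and then to propagate these seed values through products using only the left-Loday identities \eqref{loday}. From \eqref{tadidaz} together with $\dgal{z,x}=-\dgal{x,z}^\circ$ one reads off $\{x,x\}=\{z,z\}=0$, $\{x,z\}=zx$ and $\{z,x\}=-xz$; these, and the explicit tensor $\dgal{z,x}$, are the only data taken directly from the double bracket.

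First I would establish the exact identity $\{x^a,z\}=a\,zx^a$, which is the one place where the double bracket itself is manipulated. Expanding $\dgal{z,x^a}$ by the Leibniz rule in the second argument gives $\dgal{z,x^a}=\sum_{k=0}^{a-1}x^k\dgal{z,x}\,x^{a-1-k}$ for the outer action, and then $\dgal{x^a,z}=-\dgal{z,x^a}^\circ$. Applying $m$ and using $m((u\otimes v)^\circ)=vu$, the factors $x^{a-1-k}x^{k}$ collapse to $x^{a-1}$ in every summand, so $\{x^a,z\}=-a\,\dgal{z,x}''\,x^{a-1}\,\dgal{z,x}'$; substituting the explicit $\dgal{z,x}$ and cancelling the $x^az$-terms against each other yields $a\,zx^a$. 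Formula \eqref{Eq:tadfg} is then immediate: by Leibniz in the second slot \eqref{loday} and Proposition \ref{Prop:int},
\[
\{x^a,zx^b\}=\{x^a,z\}x^b+z\{x^a,x^b\}=a\,zx^a\cdot x^b+0=a\,zx^{a+b},
\]
an exact identity in $A'$, a fortiori true modulo $[A',A']$.

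For \eqref{Eq:tadgg} the key device is to write $zx^a=\tfrac1a\{x^a,z\}$ for $a\ge1$ (the case $a=0$ being a seed bracket, handled directly), which turns the awkward \emph{product in the first argument} into an iterated bracket so that the Loday--Jacobi identity \eqref{loday} applies. This sidesteps the messy first-argument (inner bimodule) derivation rule entirely. Concretely I would compute the two atomic brackets
\[
\{zx^a,z\}=\tfrac1a\{\{x^a,z\},z\}=-\{z,zx^a\}=\sum_{p=1}^{a}zx^{p}zx^{a-p},\qquad \{zx^a,x\}=\tfrac1a\{\{x^a,z\},x\}=-xzx^a,
\]
where in each case \eqref{loday} rewrites $\{\{x^a,z\},\cdot\}$ as $\{x^a,\{z,\cdot\}\}-\{z,\{x^a,\cdot\}\}$, and one of the two terms is killed by $\{z,z\}=0$ respectively $\{x^a,x\}=0$ (Proposition \ref{Prop:int}); the surviving term reduces via Leibniz and the seed values to the stated monomials.

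Finally I would assemble \eqref{Eq:tadgg} by one more Leibniz step in the second argument, $\{zx^a,zx^b\}=\{zx^a,z\}x^b+z\{zx^a,x^b\}$ with $\{zx^a,x^b\}=\sum_{j=0}^{b-1}x^{j}\{zx^a,x\}x^{b-1-j}$. Substituting the two atomic brackets and reindexing the sums (by $r=p$ in the first, $r=j+1$ in the second) gives precisely $\sum_{r=1}^{a}zx^rzx^{a+b-r}-\sum_{r=1}^{b}zx^rzx^{a+b-r}$. I expect the main obstacle to be purely organisational: producing $\{x^a,z\}=a\,zx^a$ cleanly (the only genuine double-bracket computation) and keeping the telescoping reindexations straight. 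Once the ``$zx^a=\tfrac1a\{x^a,z\}$ plus Loday--Jacobi'' trick is in place no first-argument derivation rule is needed, and in fact every identity here comes out exactly, the reduction modulo $[A',A']$ in the statement being only what is subsequently required for traces.
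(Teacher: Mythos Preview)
Your argument is correct and in fact yields the identities of Proposition~\ref{Prop:dBrTad} exactly in $A'$, not merely modulo commutators. The one genuine double-bracket computation---showing $\{x^a,z\}=a\,zx^a$ by expanding $\dgal{z,x^a}$ and applying $m\circ(-)^\circ$---is clean, and the Loday--Jacobi trick of writing $zx^a=\tfrac1a\{x^a,z\}$ to avoid the first-argument derivation rule is a nice device.

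This is a genuinely different route from the paper's. The paper (Appendix~\ref{Ann:brackets}) proves everything first for the cyclic quiver in the variables $x,y$ by computing a long list of double brackets $\dgal{x^a,y}$, $\dgal{yx^b,x^a}$, $\dgal{yx^a,y}$ explicitly, working modulo $[A,A]$ throughout; it then specialises to the tadpole by setting all $E_r$ equal to $e_0\otimes e_0$, obtaining the $y$-formulas \eqref{Eq:tadfg2}--\eqref{Eq:tadgga}, and only at the very end substitutes $y=z-x^{-1}$ and rearranges. Your approach is considerably more economical for this particular statement: one seed computation plus the Loday structure, and no commutator bookkeeping. The trade-off is that the paper's brute-force method is designed to handle the cyclic case (Proposition~\ref{Prop:dBrcyclic}) uniformly, where the extra $E_r$-grading and the $\mod m$ constraints on exponents make the ``write $zx^a$ as a bracket and apply Jacobi'' shortcut less transparent, and where the $y$-formulas are needed independently. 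For the tadpole alone, your argument is simply better.
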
 
Proofs can be found in Appendix \ref{Ann:brackets}. \qed

\bigskip

For $q=(q_\infty, q_0)$, the multiplicative 
preprojective algebra $\Lambda^{q}$ is the quotient of $A$  by the relation 
\begin{equation}
\label{Eq:ConTad}
 (1+xy)(1+yx)^{-1}(1+vw)(1+wv)^{-1}=q_0 e_0 + q_\infty e_\infty\,.
\end{equation}
Multiplication by the idempotents $e_\infty, e_0$ turns this into two relations:
\begin{subequations}
       \begin{align}
&(e_0+xy)(e_0+yx)^{-1}(e_0+vw)=q_0e_0\,,\label{rel1}\\
&(e_\infty+wv)^{-1}=q_\infty e_\infty\,.\label{rel2}
       \end{align}
  \end{subequations} 
Choose a dimension vector $\alpha=(1,n)$ and set $q_\infty=q_0^{-n}$ to satisfy $q^\alpha=1$. A representation of $\Lambda^q$ of dimension $\alpha$ is a pair $(\VV_\infty, \VV_0)=(\CC, \CC^n)$ together with linear maps representing arrows of $\bar{Q}$ and satysfying \eqref{rel1}, \eqref{rel2}. 
Denote the matrices representing the arrows as $X, Y, V, W$. Therefore, points of $\Rep(\Lambda^q, \alpha)$ are represented by quadruples $(X,Y,V,W)$,
\begin{equation*}
  X,Y\in \Mat_{n\times n}(\CC),\quad V\in \Mat_{n\times 1}(\CC),\quad W\in \Mat_{1\times n}(\CC)\,,
\end{equation*}
satisfying 
  \begin{subequations}
       \begin{align}
&(\Id_{n}+XY)(\Id_{n}+YX)^{-1}(\Id_{n}+VW)=q_0\Id_n\,, \label{Eq:CondTadInv} \\
&(1+WV)^{-1}=q_\infty\quad(q_\infty=q_0^{-n})\,. \label{Eq:CondRq}
       \end{align}
  \end{subequations} 
The group $\Gl_n(\CC)$ acts on these quadruples by
\begin{equation}\label{gact}
g. (X,Y,V,W)=(gXg^{-1},gYg^{-1}, gV, Wg^{-1})\,,\quad g\in\Gl_n\,, 
\end{equation}
and the orbits in $\Rep\left(\Lambda^{q},\alpha\right)/\!/\Gl_n$ correspond to isomorphism classes of semisimple representations. 
Introduce the \emph{Calogero--Moser space} $\mathcal{C}_{n,q_0}$ as 
\begin{equation*}
\mathcal{C}_{n,q_0}=\Rep\left(\Lambda^{q},\alpha\right)/\!/\Gl_n\,.
\end{equation*}
This is a multiplicative quiver variety for a framed one-loop quiver, and applying the results of \ref{Sec:mqv}, we have

\begin{prop}
Suppose $q_0$ is not a root of unity. Then the group $\Gl_n$ acts on $\Rep(\Lambda^q, \alpha)$ freely, and $\mathcal{C}_{n,q_0}$ is a smooth symplectic variety of dimension $2n$.
\end{prop}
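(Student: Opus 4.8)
The plan is to recognise $\mathcal{C}_{n,q_0}$ as the multiplicative quiver variety $\mathcal{M}_{n,q_0}(Q)$ of Subsection \ref{Sec:mqv}, attached to the \emph{one-loop} quiver $Q$ (a single vertex $0$ carrying the loop $x$) and to its framing $\widetilde{Q}$, which adjoins the vertex $\infty$ and the arrow $v\colon 0\to\infty$. Under this dictionary the tadpole quiver of the present section is exactly $\widetilde{Q}$, and the data $\alpha=(1,n)$, $q=(q_\infty,q_0)$ with $q_\infty=q_0^{-n}$ match $\aalpha=(1,n)$ and $\qq$ of \eqref{frame}. With the identification $\mathcal{C}_{n,q_0}=\mathcal{M}_{n,q_0}(Q)$ in hand, the whole statement will follow from Proposition \ref{mqvar} and the dimension count $2p(\aalpha)$, once I check that the hypothesis ``$q_0$ is not a root of unity'' coincides with the regularity of $q$.

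The heart of the argument is therefore the description of the positive roots of $Q$. For the one-loop quiver one has $p(de_0)=1+d^2-d^2=1$ for every $d\ge 1$, and the symmetric Euler form satisfies $(e_0,e_0)=0$. Consequently the simple reflection $s_0$ acts trivially, every $de_0$ lies in the fundamental region, and $p(de_0)=1\ge 1$ shows that each $de_0$ is a positive (imaginary) root. Hence the positive roots of $Q$ are precisely $\{de_0:d\ge 1\}$, and $q=q_0e_0$ is regular, i.e.\ $q^{de_0}=q_0^{d}\ne 1$ for all roots, exactly when $q_0^d\ne 1$ for all $d\ge 1$ --- that is, when $q_0$ is not a root of unity. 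This is the one step requiring genuine care; the rest is formal.

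Granting regularity, Proposition \ref{mqvar} applies directly: every $\Lambda^{\qq}$-module of dimension $\aalpha=(1,n)$ is simple, the group $G(\aalpha)\cong\Gl_n$ (see \eqref{gl}) acts freely on $\Rep(\Lambda^{\qq},\aalpha)$, and $\mathcal{C}_{n,q_0}=\mathcal{M}_{n,q_0}(Q)$ is smooth. For the dimension I would evaluate $p(\aalpha)$ for $\widetilde{Q}$ using the formula of Theorem \ref{dim}: the arrows $x$ and $v$ of $\widetilde{Q}$ contribute $\alpha_0^2+\alpha_0\alpha_\infty=n^2+n$, while $\aalpha\cdot\aalpha=n^2+1$, so that $p(\aalpha)=1+(n^2+n)-(n^2+1)=n$ and $\dim\mathcal{C}_{n,q_0}=2p(\aalpha)=2n$ at every point (non-emptiness being clear from the explicit representations constructed later in this section).

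It remains to upgrade the Poisson structure to a symplectic one. Here I would invoke the quasi-Hamiltonian reduction of Theorem \ref{Thm:Poiss}: the $G(\aalpha)$-action is free and proper on the smooth variety $\Rep(\Lambda^{\qq},\aalpha)$, so the reduced space carries a canonical Poisson bracket, whose non-degeneracy --- equivalently, the sought symplectic structure of dimension $2n$ --- follows from \cite[8.3.1]{VdBquasiSpaces}, exactly as recorded in the discussion after Theorem \ref{dim}. I anticipate no obstacle beyond verifying that the freeness hypothesis of the cited reduction theorem holds, which is precisely what the regularity of $q$ has secured.
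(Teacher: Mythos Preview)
Your proposal is correct and follows essentially the same approach as the paper, which simply remarks that $\mathcal{C}_{n,q_0}$ is a multiplicative quiver variety for the framed one-loop quiver and then invokes the general results of \ref{Sec:mqv}. You have spelled out in detail what the paper leaves implicit---the identification of the root system of the Jordan quiver, the equivalence of regularity with $q_0$ not being a root of unity, the dimension count $p(\aalpha)=n$, and the appeal to \cite[8.3.1]{VdBquasiSpaces} for non-degeneracy---but the logical route is identical.
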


The variety $\mathcal{C}_{n,q_0}$ admits a description in terms of pairs of matrices as follows:
\begin{equation*}
{\mathcal{C}}_{n,q_0}=\{X,Y\in\Mat_{n\times n}(\CC) \mid \mathrm{rank}\left((\Id_n+XY)(\Id_n+YX)^{-1}-q_0\Id_n\right)=1\}//\Gl_n\,. 
\end{equation*}
We may also consider the open subset $\mathcal C^{0}_{n, q_0}\subset \mathcal C_{n, q_0}$ on which $X$ is invertible. Introducing $Z:=Y+X^{-1}$, we have $\Id_n+XY=XZ$, $\Id_n+YX=ZX$ and therefore
\begin{equation*}
{\mathcal{C}}^0_{n,q_0}=\{X,Z\in\Gl_{n} \mid \mathrm{rank} \left(XZX^{-1}Z^{-1}-q_0\Id_n\right)=1\}//\Gl_n\,. 
\end{equation*}

The Poisson bracket on $\mathcal O(\mathcal C_{n, q_0})=\mathcal O(\Rep(\Lambda^q, \alpha))^{\Gl_n(\CC)}$ is induced by the double bracket on $A$. Proposition \ref{Prop:int} together with \eqref{trace} give us

\begin{thm}
\label{Thm:TadInvol}
The following families of functions on $\mathcal{C}_{n,q_0}$ are Poisson commuting:  
\begin{equation*}
\left\{\tr X^j\,\, \big|\,\, j\in \N\right\}\,,\quad \left\{\tr Y^j\big|\,\, j\in \N\right\}\,,\quad \left\{\tr (1+XY)^j\big|\,\, j\in \Z\right\}\,,\quad \left\{\tr (Y+X^{-1})^j\big|\,\, j\in \Z\right\}\,,
\end{equation*}
where the last family is viewed on $\mathcal C^{0}_{n, q_0}\subset \mathcal C_{n, q_0}$.
 \end{thm}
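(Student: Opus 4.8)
The plan is to reduce every Poisson bracket on $\mathcal C_{n,q_0}$ to a single bracket inside the algebra $A$ (or its localisation $A'$ on $x$) via the trace formula \eqref{trace}, and then to quote Proposition \ref{Prop:int}. First I would record that each listed function is $\Gl_n$-invariant, since the action \eqref{gact} conjugates $X$ and $Y$; hence $\tr X^j$, $\tr Y^j$, $\tr(\Id_n+XY)^j$ and $\tr(Y+X^{-1})^j$ descend to $\mathcal C_{n,q_0}$, the last family only to the open subset $\mathcal C^0_{n,q_0}$ where $X$ is invertible. By the quasi-Hamiltonian reduction of Theorem \ref{Thm:Poiss}, the Poisson bracket of two invariant functions on $\mathcal O(\Rep(\Lambda^q,\alpha))^{\Gl_n}$ is computed by restricting the induced bracket \eqref{derr} of $\Rep(A,\alpha)$ to the level set $\{\Phi=q\}$. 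As each function is $\tr(a)$ for the corresponding $a\in A$, the formula \eqref{trace} together with \eqref{sbra} gives $\{\tr X^a,\tr X^b\}=\tr\{x^a,x^b\}$ and similarly for the other families, where $\{-,-\}=m\circ\dgal{-,-}$ is the single bracket on $A$. It therefore suffices to make these single brackets vanish.

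For nonnegative exponents this is immediate. Proposition \ref{Prop:int} gives $\{x^a,x^b\}=0$ and $\{y^a,y^b\}=0$, settling the first two families. For the third with $a,b\ge 0$, I would expand $(\Id_n+XY)^a$ and $(\Id_n+XY)^b$ binomially, which is legitimate because $1$ is central, and then use bilinearity of the single bracket together with $\{(xy)^i,(xy)^j\}=0$ to conclude $\{(1+xy)^a,(1+xy)^b\}=0$. The fourth family with $a,b\ge 0$ is identical after passing to $A'$ and $\mathcal C^0_{n,q_0}$, using $\{z^a,z^b\}=0$ with $z=y+x^{-1}$ from the localised part of Proposition \ref{Prop:int}.

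The genuine extra step is to reach the negative exponents in the third and fourth families, and this is where I expect the main obstacle to lie: Proposition \ref{Prop:int} covers only nonnegative powers, and the single bracket is a derivation only in its second argument, so propagating the vanishing through an inverse in the first slot is awkward purely algebraically. I would instead argue on the reduced space. Let $U$ represent $\Id_n+XY$ on $\mathcal C_{n,q_0}$, respectively $Z=Y+X^{-1}$ on $\mathcal C^0_{n,q_0}$; in both cases $U$ is invertible everywhere on the relevant variety, since $1+xx^*$ is invertible in $A$ and $Z=X^{-1}(\Id_n+XY)$ on $\mathcal C^0_{n,q_0}$. By Newton's identities and the Cayley--Hamilton theorem, each $\tr U^{-j}$ is a rational function of $\tr U,\dots,\tr U^{n}$ whose denominator is a power of the nowhere-vanishing $\det U$. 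The reduced Poisson bracket is a biderivation and extends to the localisation at $\det U$; since all $\tr U^{k}$ with $k\ge 0$ Poisson commute by the previous paragraph, the chain rule forces $\{\tr U^{j},\tr U^{k}\}=0$ for all $j,k\in\Z$, which completes all four families.
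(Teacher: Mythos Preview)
Your argument is correct and its core coincides with the paper's one-line proof: both reduce the Poisson brackets on $\mathcal C_{n,q_0}$ to the single bracket $\{-,-\}=m\circ\dgal{-,-}$ on $A$ via the trace identity \eqref{trace}, and then invoke Proposition~\ref{Prop:int}. Your binomial-expansion step for $(1+xy)^a$ is just bilinearity and is implicit in the paper.

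Where you go beyond the paper is in treating the negative exponents for the families $\tr(1+XY)^j$ and $\tr(Y+X^{-1})^j$. The paper's proof simply cites Proposition~\ref{Prop:int}, which is stated only for $a,b\ge 0$, and does not spell out how $j<0$ is covered. Your Newton--Cayley--Hamilton argument on the reduced space is a clean and correct way to close this gap: once $U$ is everywhere invertible, each $\tr U^{-j}$ lies in the subalgebra generated by $\tr U,\dots,\tr U^n$ localised at $\det U$, and the biderivation property of the honest Poisson bracket on $\mathcal C_{n,q_0}$ propagates the commutation. An alternative, closer in spirit to the paper, would be to stay in $A$ and use that the double bracket is a derivation in each argument to extend $\{u^a,u^b\}=0$ to negative powers of the invertible element $u=1+xy$ (respectively $u=z$); your route has the advantage of avoiding the asymmetry of the Loday bracket in the first slot.
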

\begin{rem}
If we assume $Y$ invertible, we can get another commuting family $\left\{\tr (X+Y^{-1})^j\big|\,\, j\in \Z\right\}$.   
\end{rem}
To interpret these families as integrable particle systems, we next write them down in suitable canonical (Darboux) coordinates. 

\subsection{Darboux coordinates}   \label{ss:TadCoord} We take $\h=\CC^n$ with coordinates $x_1, \dots, x_n$, and define
\begin{equation*}
\hreg=\{x\in\h\, \mid \, x_i\ne 0\,,\ x_i\ne x_j\,,\ x_i\ne q_0x_j\ \text{for all}\ i\ne j\}\,.  
\end{equation*}
Let $\h^\times=(\CC^\times)^n$ with coordinates $\nu_1,\dots, \nu_n\in\CC^\times$. We are going to define a map 
\begin{equation*}
\xi:\ \hreg\times\h^\times\to \mathcal C^0_{n, q_0}\,.
\end{equation*} 
Given $x\in\hreg$, $\nu\in\h^\times$, we set $\xi(x, \nu)=(X,Z)$ where  
\begin{equation*}
X=\diag(x_1,\ldots,x_n)\,,\quad Z=(Z_{ij})\,,\ \text{with }\  Z_{ij}=\frac{(q_0-1)\nu_j}{q_0-x_i/x_j}\,.
\end{equation*}
One can check directly, using Cauchy's determinant formula, that $Z$ is invertible. We also have
\begin{equation*}
(XZ-q_0ZX)_{ij}=x_iZ_{ij}-q_0Z_{ij}x_j=(1-q_0)\nu_jx_j\,,
\end{equation*} 
which shows that the matrix $XZX^{-1}Z^{-1}-q_0\Id_n$ has rank one, and so the pair $(X,Z)$ determines a point of $\mathcal{C}^0_{n,q_0}$.
Moreover, if one simultaneoulsy permutes $x_i$ and $\nu_i$, the resulting $(X,Z)$ get conjugated by the matrix of that permutation. Therefore, we have in fact a map
\begin{equation*}
\xi:\ \hreg\times\h^\times\,/\,S_n\to \mathcal C^0_{n, q_0}\,.
\end{equation*} 
It is easy to see that $\xi$ is injective, and since $\dim\mathcal C_{n, q_0}=2n$, we can use $(x_1,\dots, x_n, \nu_1, \dots, \nu_n)$ as local coordinates on $\mathcal C^0_{n, q_0}$. Note that by \cite{Oblomkov}, the variety $\mathcal C_{n, q_0}^0$ is connected, hence the image of $\xi$ is dense.   

\begin{prop}[\cite{FockRosly}, \cite{Oblomkov}]
\label{Thm:IsoPssTad}
The local diffeomorphism $\xi:\, \hreg\times\h^\times \,/ \,S_n\to \mathcal{C}^0_{n,q_0}$ becomes a Poisson map if we equip $\hreg\times \h^\times$ with 
the following $S_n$-invariant Poisson bracket $\{-,-\}'$: 
  \begin{subequations}
       \begin{align}
 \{x_i,x_j\}'&=0 \, , \label{Eq:Txx} \\
\{x_i,\nu_j\}'&=\delta_{ij} x_i \nu_j\, , \label{Eq:Txnu} \\
 \{\nu_i,\nu_j\}'&=\frac{(1-q_0)^2 (x_i+x_j)x_ix_j\nu_i \nu_j}{(x_i-x_j)(x_i-q_0x_j)(x_j-q_0x_i)} 
\label{Eq:Tnunu}\,. 
       \end{align}
  \end{subequations}
\end{prop}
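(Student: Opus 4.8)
The plan is to verify the Poisson-morphism property on a convenient generating set of functions, exploiting that $\xi$ is a local diffeomorphism onto a dense open subset of $\mathcal{C}^0_{n,q_0}$. Since a Poisson bracket is a biderivation and extends uniquely to the fraction field, it suffices to check $\xi^*\{f,g\}=\{\xi^*f,\xi^*g\}'$ for $f,g$ in a family whose pullbacks generate the field of rational functions on $\hreg\times\h^\times/S_n$. I would take this family to be the invariant functions $\tr X^a$ ($a\ge1$) and $\tr(ZX^b)$ ($b\ge0$). A direct computation on the slice $X=\diag(x_1,\dots,x_n)$ gives $\xi^*\tr X^a=\sum_i x_i^a$ and, using $Z_{ii}=\nu_i$, $\xi^*\tr(ZX^b)=\sum_i\nu_i x_i^b$; the power sums together with a Vandermonde argument show these pullbacks indeed generate the $S_n$-invariant rational functions, so checking on them is enough. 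The brackets of these invariants on $\mathcal{C}^0_{n,q_0}$ are furnished directly by Proposition \ref{Prop:dBrTad} combined with the trace identity \eqref{trace}, since on $\Gl_n$-invariant functions the induced quasi-Poisson bracket descends to the reduced Poisson bracket.

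The first two families of relations are quickly dispatched. From $\{x^a,x^b\}=0$ (Proposition \ref{Prop:int}) and \eqref{trace} we get $\{\tr X^a,\tr X^b\}=0$; expanding the left-hand side with an a priori unknown bracket $\{x_i,x_j\}'$ and comparing coefficients of the independent monomials forces \eqref{Eq:Txx}. Next, \eqref{Eq:tadfg} and \eqref{trace} give $\{\tr X^a,\tr(ZX^b)\}=a\,\tr(ZX^{a+b})$; expanding the left side with \eqref{Eq:Txx} and an unknown $\{x_i,\nu_j\}'$ and matching against $a\sum_i\nu_i x_i^{a+b}$ pins down $\{x_i,\nu_j\}'=\delta_{ij}x_i\nu_i$, i.e.\ \eqref{Eq:Txnu}.

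The substance of the proof is the $\{\nu_i,\nu_j\}'$ bracket. I would write $D_{ij}:=\{\nu_i,\nu_j\}'$ (antisymmetric, unknown) and expand $\{\tr(ZX^a),\tr(ZX^b)\}'=\{\sum_i\nu_ix_i^a,\sum_j\nu_jx_j^b\}'$ using \eqref{Eq:Txx}, \eqref{Eq:Txnu}. The diagonal terms collect into $(a-b)\sum_i\nu_i^2x_i^{a+b}$ and the off-diagonal into $\sum_{i\ne j}D_{ij}x_i^ax_j^b$. On the other side, \eqref{Eq:tadgg} and \eqref{trace} express the bracket as $\sum_{r=1}^a\tr(ZX^rZX^{a+b-r})-\sum_{r=1}^b\tr(ZX^rZX^{a+b-r})$, and on the slice $\tr(ZX^rZX^s)=\sum_{i,j}Z_{ij}Z_{ji}\,x_j^rx_i^s$. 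Here $Z_{ii}^2=\nu_i^2$ reproduces exactly the diagonal $(a-b)$-term, while for $i\ne j$ one computes $Z_{ij}Z_{ji}=\frac{(1-q_0)^2 x_ix_j\nu_i\nu_j}{(x_i-q_0x_j)(x_j-q_0x_i)}=:P_{ij}$, symmetric in $i,j$. Summing the geometric series $\sum_{r=1}^a(x_j/x_i)^r-\sum_{r=1}^b(x_j/x_i)^r$, relabelling $i\leftrightarrow j$ in one of the resulting terms and using $P_{ij}=P_{ji}$, the off-diagonal part collapses to $\sum_{i\ne j}P_{ij}\frac{x_i+x_j}{x_i-x_j}x_i^ax_j^b$. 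Comparing coefficients of $x_i^ax_j^b$ yields $D_{ij}=P_{ij}\frac{x_i+x_j}{x_i-x_j}$, which is precisely \eqref{Eq:Tnunu}.

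The main obstacle is the off-diagonal matching in this last step: one must carefully handle the telescoping difference of geometric sums and the symmetrisation under $i\leftrightarrow j$ to bring the right-hand side into a form carrying a single monomial $x_i^ax_j^b$, and then invoke linear independence of these monomials (valid for generic distinct eigenvalues and all $a,b\ge1$) to read off $D_{ij}$. Everything else is bookkeeping; in particular the cancellation of the diagonal $(a-b)\sum_i\nu_i^2x_i^{a+b}$ terms between the two sides is automatic and provides a useful consistency check.
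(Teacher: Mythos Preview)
Your proof is correct and follows essentially the same route as the paper's: both reduce the question to checking the Poisson property on the generating functions $\tr X^a$ and $\tr(ZX^b)$, invoke Propositions \ref{Prop:int} and \ref{Prop:dBrTad} together with \eqref{trace} for the target brackets, and compare with the expansion on the slice $X=\diag(x_i)$. The only cosmetic difference is that the paper \emph{verifies} the given formulas \eqref{Eq:Txx}--\eqref{Eq:Tnunu} by computing $\{\xi^*g_a,\xi^*g_b\}'$ directly and matching it to $\sum_r \xi^*h_{r,a+b-r}$, whereas you \emph{derive} the bracket by treating $D_{ij}$ as unknown; the underlying calculation (the geometric-series collapse and the $i\leftrightarrow j$ symmetrisation using $P_{ij}=P_{ji}$) is identical, and the paper is equally terse about it (``It is easy to see that this expression coincides with \eqref{sumh}'').
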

\begin{proof}
We need to check that the map $\xi$ satisfies 
$\xi^*\{f, g\}=\{\xi^*f,\xi^*g\}'$ for any $f,g\in \mathcal O(\mathcal C^0_{n, q_0})$. Since the functions $f_a:=\tr(X^a)$ and $g_b:=\tr (ZX^b)$ with $a,b=1,\dots, n$ 
form local coordinates at a generic point, it suffices to check the Poisson property for these functions (cf. the proof of Proposition 2.7 in \cite{EtingCM}). From \eqref{trace} and Propositions \ref{Prop:int}, \ref{Prop:dBrTad} we have:
\begin{equation}\label{old}
\{f_a, f_b\}=0\,,\quad \{f_a, g_b\}=ag_{a+b}\,,\quad \{g_b, g_c\}=\sum_{r=b+1}^c h_{r, b+c-r}\,,
\end{equation}
where $h_{r,s}:=\tr (ZX^rZX^s)$ (we assume $b<c$ in the last formula). Next,
\begin{equation}\label{hab}
\xi^*f_a=\sum_{i} x_i^a\,,\quad \xi^*g_b=\sum_{i} \nu_ix_i^b\,,\quad \xi^*h_{r, a+b-r}=\sum_{i,j}\frac{(q_0-1)^2\nu_i\nu_j}{(q_0-x_i/x_j)(q_0-x_j/x_i)}x_j^rx_i^{a+b-r}\,.
\end{equation}
Therefore,
\begin{equation}\label{sumh}
\sum_{r=b+1}^a \xi^*h_{r, a+b-r}=(a-b)\sum_i \nu_i^2x_i^{a+b} +\sum_{i\ne j}\frac{(q_0-1)^2\nu_i\nu_j}{(q_0-x_i/x_j)(q_0-x_j/x_i)}\sum_{r=b+1}^a x_j^rx_i^{a+b-r}\,.
\end{equation}
Now notice that \eqref{Eq:Txx} and \eqref{Eq:Txnu} give 
\begin{equation*} 
    \{\xi^*f_a,\xi^*f_b\}'=\sum_{i,j} \{x_i^a,x_j^b\}'=0 \, ,\qquad
\{\xi^*f_a,\xi^*g_b\}'=\sum_{i,j} \{x_i^a,\nu_jx_j^b\}'=a \sum_{i} x_i^{a+b}\nu_i\,,
\end{equation*}
which agrees with the first two formulas in \eqref{old}. 
Next, we use \eqref{Eq:Tnunu} to find that   
\begin{equation*} 
\{\xi^*g_a,\xi^*g_b\}'=\sum_{i,j} \{\nu_ix_i^a,\nu_jx_j^b\}' =\sum_{i\neq j} \frac{(1-q_0)^2 (x_i+x_j)\nu_i \nu_j x_i^ax_j^b}{(x_i-x_j)(x_i/x_j-q_0)(x_j/x_i-q_0)} 
+(a-b) \sum_{i}  \nu_i^2x_i^{a+b}\,. 
\end{equation*}
It is easy to see that this expression coincides with \eqref{sumh}. 
\end{proof}

\bigskip

The coordinates $(x_i, \nu_i)$ are not yet canonical since $\{\nu_i, \nu_j\}'\ne 0$. A set of log-canonical coordinates can be constructed analogously to \cite{FockRosly}. Namely, introduce
\begin{equation} 
\label{Eq:TadSigm_i}
 \sigma_i=\nu_i\,\prod_{j:\, j\neq i}\frac{1-x_ix_j^{-1}}{1-q_0x_ix_j^{-1}}\qquad (i=1, \dots, n)\,. 
\end{equation}
Then one checks directly that
\begin{equation*}
\{x_i,x_j\}'=0 \, , \quad
\{x_i,\sigma_j\}'=\delta_{ij} x_i \sigma_j\, , \quad
 \{\sigma_i,\sigma_j\}'=0\,. 
 \end{equation*}

In these coordinates, we can identify some of the known integrable particle systems among those in Theorem \ref{Thm:TadInvol}. The best known is the one corresponding to the Hamiltonians $\tr (Y+X^{-1})^j$. Namely, after writing $Y+X^{-1}$ in coordinates $x_i, \sigma_i$, we have
\begin{equation}\label{RSL}
Y+X^{-1}=Z\,,\quad \text{where}\ Z_{ij}=\, \sigma_j \,\frac{q_0-1}{q_0-x_ix_j^{-1}}\,\prod_{k\neq j} \,\frac{1-q_0x_jx_k^{-1}}{1-x_jx_k^{-1}}\,.
\end{equation} 
This gives
\begin{equation*}
\tr Z=\sum_{i=1}^n \sigma_i \prod_{j\neq i} \,\frac{1-q_0x_ix_j^{-1}}{1-x_ix_j^{-1}}\,,
\end{equation*}
which is equivalent to the classical trigonometric Ruijsenaars--Schneider Hamiltonian. In fact, $Z$ after conjugation by $\diag(\sqrt{\nu_1},\ldots,\sqrt{\nu_n})$ and a change of notation coincides with the Lax matrix from \cite[Section 4]{Ruijs86}. 

Other families in Theorem \ref{Thm:TadInvol} lead to closely related integrable systems. The family $\{\tr X^j\}_{j\in\N}$ is trivial, while the simplest Hamiltonians for the other two are:
\begin{equation*}
\tr Y=\sum_{i=1}^n \sigma_i \prod_{j\neq i} \,\frac{1-q_0x_ix_j^{-1}}{1-x_ix_j^{-1}}-\sum_{i=1}^n x_i^{-1}\,,\quad \tr(1+XY)=\sum_{i=1}^n \sigma_i x_i\prod_{j\neq i} \,\frac{1-q_0x_ix_j^{-1}}{1-x_ix_j^{-1}}\,.
\end{equation*}
The second Hamitonian $\tr (1+XY)$ reduces to the Ruijsenaars--Schneider Hamiltonian by a canonical change of variables $\tilde x_i=x_i$, $\tilde\sigma_i=\sigma_ix_i$. The first Hamiltonian $\tr Y$, as explained in \cite[Remark 3.25]{BEF}, is related to the quantum system introduced by J.F. van Diejen \cite{vD, vDE} and a closely related system considered by Baker and Forrester \cite{BF}. In slightly different canonical coordinates, the classical Hamiltonian system described by $\tr Y$ was also considered by P. Iliev \cite{Iliev00}. See also recent works \cite{M, FG, FM} devoted to the study of some special cases of the van Diejen's system.

Note that the interpretation of these systems through quasi-Hamiltonian reduction achieves two things at once: a completed phase space allowing
particles to coalesce, and explicit dynamics on the affine space $\Rep(A, \alpha)$. Since this is mostly well-familiar (cf. \cite{Ruijs86, FockRosly, CF, Iliev00}), we skip the details (see Proposition \ref{dynam} below for the more general systems related to the cyclic quivers). Another benefit is that we can see rather easily the \emph{self-duality} for this system, originally established by Ruijsenaars.

\begin{prop}[\cite{R88}, \cite{FehKlim12}]\label{duality}
The transformation $(X,Z)\mapsto (Z,X)$ induces a Poisson map $\varphi: \mathcal{C}_{n, q_0}^0\,\to\,\mathcal{C}_{n, q_0^{-1}}^0$ (more precisely, $\varphi$ changes the sign of the Poisson bracket).
\end{prop}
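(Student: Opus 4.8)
The plan is to verify two things separately: that $\varphi$ is a well-defined morphism $\mathcal{C}^0_{n,q_0}\to\mathcal{C}^0_{n,q_0^{-1}}$, and that it reverses the Poisson bracket. For well-definedness, note first that the swap $(X,Z)\mapsto(Z,X)$ is manifestly $\Gl_n$-equivariant, since conjugation acts identically on $X$ and $Z$, so it descends to the GIT quotients. It remains to check that it carries the defining locus of $\mathcal{C}^0_{n,q_0}$ into that of $\mathcal{C}^0_{n,q_0^{-1}}$. Setting $M=XZX^{-1}Z^{-1}$, one has $ZXZ^{-1}X^{-1}=M^{-1}$ together with the identity
\begin{equation*}
M^{-1}-q_0^{-1}\Id_n=-q_0^{-1}M^{-1}(M-q_0\Id_n)\,.
\end{equation*}
Since $M$ is invertible, left multiplication by $-q_0^{-1}M^{-1}$ preserves rank, so $\mathrm{rank}(ZXZ^{-1}X^{-1}-q_0^{-1}\Id_n)=\mathrm{rank}(M-q_0\Id_n)=1$. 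As both $X$ and $Z$ are invertible on $\mathcal{C}^0_{n,q_0}$, the pair $(Z,X)$ lies in $\mathcal{C}^0_{n,q_0^{-1}}$, and running the computation with $q_0\leftrightarrow q_0^{-1}$ shows $\varphi$ is an involutive bijection.

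For the anti-Poisson property I would reduce everything to an identity inside $A'$. Recall that $\mathcal{O}(\mathcal{C}^0_{n,q_0})$ is generated by traces $\tr(c)$ of words $c$ in $X^{\pm1},Z^{\pm1}$: indeed the rank-one relation gives $VW=q_0\,ZXZ^{-1}X^{-1}-\Id_n$, so every invariant built from the framing data is already a polynomial in $X^{\pm1},Z^{\pm1}$. By \eqref{trace}, on each reduced space the Poisson bracket of two such traces is $\{\tr a,\tr b\}=\tr\{a,b\}$ with $\{a,b\}=m\circ\dgal{a,b}\in A'$, the double bracket being computed from \eqref{tadida} and \eqref{tadidaz}; crucially this bracket is independent of $q_0$, which enters only through the level set. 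Writing $\theta$ for the automorphism of the subalgebra generated by $x^{\pm1},z^{\pm1}$ that interchanges $x\leftrightarrow z$, the pullback $\varphi^*$ acts on generators exactly by $\tr(c)\mapsto\tr(\theta c)$. Thus the statement that $\varphi$ reverses the bracket becomes the purely algebraic identity
\begin{equation*}
\tr\{\theta a,\theta b\}=-\tr\bigl(\theta\{a,b\}\bigr)\qquad (a,b\ \text{words in}\ x^{\pm1},z^{\pm1})\,,
\end{equation*}
to be checked modulo $[A',A']$.

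The heart of the matter is the double-bracket identity
\begin{equation*}
\dgal{\theta a,\theta b}=-(\theta\otimes\theta)\,\dgal{a,b}\,,
\end{equation*}
after which the previous display follows by applying $m$ (using $m\circ(\theta\otimes\theta)=\theta\circ m$) and then $\tr$. This I would verify directly on the generators from \eqref{tadida} and \eqref{tadidaz}: a short computation gives $\dgal{z,z}=-(\theta\otimes\theta)\dgal{x,x}$, $\dgal{x,x}=-(\theta\otimes\theta)\dgal{z,z}$, and $\dgal{z,x}=-(\theta\otimes\theta)\dgal{x,z}$. It then extends to all words by induction using the Leibniz rule $\dgal{a,bc}=b\dgal{a,c}+\dgal{a,b}c$ and the compatibility $(\theta\otimes\theta)(b\cdot u)=\theta(b)\cdot(\theta\otimes\theta)(u)$ of $\theta$ with the outer bimodule structure, together with the analogous rule in the first argument; inverses are handled through $\dgal{a,b^{-1}}=-b^{-1}\dgal{a,b}b^{-1}$, which is again $\theta$-compatible.

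The step I expect to require the most care is the passage from generators to arbitrary words: one must ensure that $\theta$ is a genuine algebra automorphism of the subalgebra generated by $x^{\pm1},z^{\pm1}$ so that the inductive Leibniz argument is legitimate and descends cleanly to traces, and one must keep careful track of the fact that the double bracket is $q_0$-independent while the two Calogero--Moser spaces sit over different level sets. It is precisely the combination of the sign appearing in the double-bracket identity with the interchange $q_0\leftrightarrow q_0^{-1}$ of level sets established in the first step that produces an honest anti-Poisson map, rather than merely a Poisson isomorphism between two distinct spaces.
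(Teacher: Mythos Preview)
Your proposal is correct and follows essentially the same route as the paper: both arguments rest on the observation that the double brackets \eqref{tadida}, \eqref{tadidaz} between $x$ and $z$ acquire a global sign under the swap $x\leftrightarrow z$, with the paper phrasing this at the level of the induced biderivation on matrix entries $X_{ij},Z_{ij}$ while you work directly with the double bracket on $A'$ and the automorphism~$\theta$ (these viewpoints being equivalent via \eqref{derr}). Your version is more explicit about well-definedness and the inductive extension to arbitrary words, both of which the paper leaves implicit.
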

\begin{proof} 
From the proof of Proposition \ref{Thm:IsoPssTad}, we know that the Poisson bracket on $\mathcal{C}_{n, q_0}^0$ is completely determined by the brackets between the functions $\tr\,(X^aZ^b)$ with $a,b\ge 0$. These brackets, on the other hand, are determined by the antisymmetric biderivation (bivector) on $\Rep(\CC\bar Q, \alpha)$, defined according to \eqref{derr}. 
Using this and \eqref{tadida}, \eqref{tadidaz}, we find that
 \begin{eqnarray*}
\{X_{ij},X_{uv}\}\,&=\,&\frac{1}{2}(X^2)_{uj}\delta_{iv}- \frac12 \delta_{uj}(X^2)_{iv}\,,\quad
\{Z_{ij},Z_{uv}\}\,=\,\frac{1}{2} \delta_{uj}(Z^2)_{iv}- \frac12 (Z^2)_{uj}\delta_{iv}\,,\\
\{X_{ij},Z_{uv}\}\,&=\,& \frac{1}{2} (ZX)_{uj}\delta_{iv} +\frac{1}{2} \delta_{uj}(XZ)_{iv}
+\frac{1}{2} Z_{uj}X_{iv}-\frac12 X_{uj}Z_{iv}\,. 
  \end{eqnarray*} 
It is now obvious that swapping $X$ with $Z$ in these formulas leads to a change of sign. Therefore, $\{\tr X^aZ^b, \tr X^cZ^d\}=-\{\tr Z^aX^b, \tr Z^cX^d\}$ for all $a,b,c,d\ge 0$, as needed.  
\end{proof}  

For the later use, let us formulate the above proposition in coordinates. We already have log-canonical coordinates $x=(x_1,\dots, x_n)$, $\sigma=(\sigma_1, \dots, \sigma_n)$ on $\mathcal{C}_{n, q_0}^0$, obtained by taking $X=\diag(x_1, \dots, x_n)$ and $Z$ as in \eqref{RSL}. The second set of coordinates $z=(z_1,\dots, z_n)$, $\theta=(\theta_1, \dots, \theta_n)$ on the same space is given by
 \begin{equation}\label{RSL2}
Z=\diag(z_1, \dots, z_n)\,,\quad  X_{ij}=\, \theta_j\,\frac{q_0^{-1}-1}{q_0^{-1}-z_iz_j^{-1}}\,\prod_{k\neq j} \,\frac{1-q_0^{-1}z_jz_k^{-1}}{1-z_jz_k^{-1}}\,.
\end{equation}  
Then the Proposition \ref{duality} can be reformulated by saying that the transformation $(x,\sigma)\to (z, \theta)$ is anti-symplectic, i.e.
\begin{equation}\label{dual}
\{z_i,z_j\}=0 \, , \quad
\{z_i,\theta_j\}=-\delta_{ij} z_i \theta_j\, , \quad
 \{\theta_i,\theta_j\}=0\,. 
 \end{equation}

\begin{rem} From the construction, it is obvious that the transformation $(x,\sigma)\to (z, \theta)$ is an involution. 
In \cite{R88} the canonicity of that involution was established by a rather roundabout method. For a more natural geometric approach, see \cite{FehKlim12, FGNR}. 
This can also be deduced from the results of \cite{Oblomkov}, where the space $\mathcal{C}_{n, \tau}^0$ was related to the center $\mathcal Z(H_{1,\tau})$ of the double affine Hecke algebra $H_{q,\tau}$. Indeed, we have the Cherednik--Fourier transform $\varepsilon: H_{q, \tau}\to H_{q^{-1}, \tau^{-1}}$ as in \cite[3.5]{Oblomkov}. It is an algebra homomorphism, so in the classical limit $q\to 1$ it gives a Poisson map $\mathcal Z(H_{1,\tau})\to \mathcal Z(H_{1,\tau^{-1}})$ which interchanges $X$ and $Z$ (the bracket changes sign because $q$ goes to $q^{-1}$ under $\epsilon$). 
\end{rem}

\begin{rem}
The Calogero--Moser spaces $\mathcal C_{n,q_0}$ and $\mathcal C_{n,q_0}^0$ are $q$-analoqs of Wilson's Calogero--Moser spaces \cite{W}. In \cite{Iliev00} and \cite{CN} they appeared in the context of the bispectral problem. They have also been studied from the point of view of non-commutative geometry, as moduli spaces of non-commutative instantons \cite{KKO} and ideals of the quantum torus algebra (see \cite{BRT} and references therein).
\end{rem}

\section{Cyclic quivers} \label{cyclic}  

For $m\ge 2$, let $Q$ be a framed cyclic quiver with the arrows $x_i:\, i\to i+1$, $i\in I=\Z/m\Z$, and $v:\, 0\to \infty$. Write $y_i=x_i^*:\, i+1\to i$ and $w=v^*:\,\infty\to 0$ for the opposite arrows. We choose the following ordering of arrows around each vertex:
\begin{equation*}
x_{i-1}<y_{i-1}<x_i<y_i\quad\text{for $i\ne 0$}\,,\qquad x_{m-1}<y_{m-1}<x_0<y_0<v<w\ \text{at $i=0$}\,.
\end{equation*}
As before, we form an algebra $A$ by adjoining to $\CC\bar{Q}$ the elements $(1+aa^*)^{-1}$ for all $a\in\bar{Q}$. Let $\dgal{-,-,}$ be a double bracket on $A$, associated to the bivector \eqref{Eq:PPi}. It gives the following brackets between the arrows $x_i, y_j$:
 \begin{subequations}
       \begin{align}
\dgal{x_i, x_j}\,=\,&\frac{1}{2} e_{i+1}\otimes x_ix_{i+1}\,\delta_{j, i+1}-\frac12 x_{i-1}x_i\otimes e_i\,\delta_{j, i-1}\,, \\
\dgal{y_i, y_j}\,=\,&\frac12 y_{i+1}y_i\otimes e_{i+1}\,\delta_{j, i+1}-\frac{1}{2} e_{i}\otimes y_iy_{i-1}\,\delta_{j, i-1}\,, \\
\dgal{x_i,y_j}\,=\,&\delta_{i,j}\left(e_{i+1}\otimes e_{i}+\frac 12 y_ix_i\otimes e_i+\frac 12e_{i+1}\otimes x_iy_i\right)\nonumber\\
\,&-\frac{1}{2} y_{i+1}\otimes x_i\,\delta_{j, i+1} + \frac{1}{2} x_i\otimes y_{i-1}\,\delta_{j, i-1}\,,\\
\dgal{y_i,x_j}\,=\,&-\delta_{i,j}\left(e_{i}\otimes e_{i+1}+\frac 12 x_iy_i\otimes e_{i+1}+\frac 12e_{i}\otimes y_ix_i\right)\nonumber\\
\,&-\frac{1}{2} y_{i}\otimes x_{i+1}\,\delta_{j, i+1} + \frac{1}{2} x_{i-1}\otimes y_{i}\,\delta_{j, i-1}\,.
\end{align}
  \end{subequations}

\medskip

Set $x=x_0+\dots +x_{m-1}$, $y=y_0+\dots +y_{m-1}$, so then $x_i=e_ix=xe_{i+1}$, $y_i=e_{i+1}y=ye_i$. If we formally invert all $x_i$, then in the localised algebra we have $x^{-1}x=xx^{-1}=1$ with $x^{-1}=\sum_{i\in I} x_i^{-1}$. Introduce the following elements $E_{\pm 1}\in A\otimes A$:
\begin{equation*}
E_1=\sum_{i\in I}e_{i+1}\otimes e_i\,,\quad E_{-1}=\sum_{i\in I}e_{i-1}\otimes e_i \,.
\end{equation*}
With this notation, we have the following easily verified formulas:
\begin{lem}
\label{cxy} 
\begin{subequations}
       \begin{align}
\dgal{x,x}\,=\,&\frac{1}{2}\left( E_1x^2-x^2 E_1\right)\,,\quad
\dgal{y,y}\,=\,\frac{1}{2}\left( y^2 E_{-1}- E_{-1}y^2\right)\,,\label{cycida}\\
\dgal{x,y}\,=\,& E_{1}
+\frac{1}{2} yx E_{1} +\frac{1}{2} E_{1}xy
-\frac{1}{2} yE_{1}x+\frac12 xE_{1} y\,,\label{cycidb}\\
\dgal{y,x}\,=\,& -E_{-1}
-\frac{1}{2} E_{-1}yx -\frac{1}{2} xyE_{-1}
+\frac{1}{2} xE_{-1} y-\frac12 yE_{-1} x\,.\label{cycidc}
\end{align}
  \end{subequations}
\end{lem}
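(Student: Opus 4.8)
The plan is to exploit the fact that a double bracket is, by definition, a $\CC$-linear map $A\otimes A\to A\otimes A$, hence $\CC$-bilinear in its two arguments. Writing $x=\sum_i x_i$ and $y=\sum_j y_j$ therefore reduces every identity to the component brackets computed immediately above the Lemma: $\dgal{x,x}=\sum_{i,j}\dgal{x_i,x_j}$, $\dgal{x,y}=\sum_{i,j}\dgal{x_i,y_j}$, and similarly for $\dgal{y,y}$ and $\dgal{y,x}$. Carrying out the inner summation over $j$ collapses the Kronecker symbols $\delta_{j,i\pm1}$ and $\delta_{i,j}$, so in each case one is left with a single sum over $i\in I$ of elementary tensors whose two legs are words in the $x_i,y_i$ capped by idempotents.

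First I would treat $\dgal{x,x}$ as the model case. After the $j$-summation one has $\dgal{x,x}=\tfrac12\sum_i\bigl(e_{i+1}\otimes x_ix_{i+1}-x_{i-1}x_i\otimes e_i\bigr)$. The key step is to recognise each of these two single sums as an outer-bimodule product of the shift tensor $E_{\pm1}$ with a word in $x,y$. Using $x_i=e_ix=xe_{i+1}$ one checks that $x^2=\sum_i x_ix_{i+1}$ with $x_ix_{i+1}=e_ix^2e_{i+2}$, whence $e_kx^2=x_kx_{k+1}$ and $x^2e_{k+1}=x_{k-1}x_k$. Multiplying $E_1=\sum_k e_{k+1}\otimes e_k$ on the right by $x^2$ in the outer bimodule structure then gives $E_1x^2=\sum_k e_{k+1}\otimes x_kx_{k+1}$ and $x^2E_1=\sum_k x_{k-1}x_k\otimes e_k$, which identifies the two sums and yields \eqref{cycida} for $\dgal{x,x}$.

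The identity \eqref{cycidb} is handled by the same vertex-bookkeeping. The five surviving terms $e_{i+1}\otimes e_i$, $\tfrac12 y_ix_i\otimes e_i$, $\tfrac12 e_{i+1}\otimes x_iy_i$, $-\tfrac12 y_{i+1}\otimes x_i$, $\tfrac12 x_i\otimes y_{i-1}$ are matched against $E_1$, $\tfrac12 yxE_1$, $\tfrac12 E_1xy$, $-\tfrac12 yE_1x$, $\tfrac12 xE_1y$ respectively, using the one-sided relations $e_k(yx)=y_kx_k$ and $e_k(xy)=x_ky_k$ together with the shift relations $e_kx=x_k$, $xe_{k+1}=x_k$, $ye_{k+1}=y_{k+1}$, $e_ky=y_{k-1}$ for the two-sided products. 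Finally, \eqref{cycida} for $\dgal{y,y}$ and the identity \eqref{cycidc} for $\dgal{y,x}$ follow in exactly the same fashion with $E_{-1}$ in place of $E_1$; alternatively, they can be deduced from the two already proved via the antisymmetry $\dgal{a,b}=-\dgal{b,a}^\circ$, using $E_1^\circ=E_{-1}$ and checking term by term (for instance $(xE_1y)^\circ=yE_{-1}x$ and $(yxE_1)^\circ=E_{-1}yx$) with the same shift relations.

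I do not anticipate any genuine obstacle: the content is entirely a matter of tracking which idempotent $e_i$ caps each leg of the tensors and of how the outer bimodule structure threads these capping idempotents through the shift tensors $E_{\pm1}$. This is precisely the routine bookkeeping the phrase ``easily verified'' refers to. The only point calling for mild care is the two-sided products such as $yE_1x$ and $xE_1y$, where one must apply $y$ to the first tensor leg and $x$ to the second (and vice versa), after which a single reindexing $k\mapsto k\pm1$ brings each sum into the required form.
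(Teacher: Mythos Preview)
Your proposal is correct and is precisely the direct verification the paper has in mind when it calls these formulas ``easily verified'': the paper gives no proof beyond that phrase (and the Remark immediately after the Lemma, which spells out the outer-bimodule convention you are using). One tiny slip: the relation you quote as $e_k(yx)=y_kx_k$ should read $e_{k+1}(yx)=y_kx_k$ (equivalently $(yx)e_{k+1}=y_kx_k$), since $y_kx_k\in e_{k+1}Ae_{k+1}$; this does not affect the argument.
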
 
\begin{rem}
In the above formulas we use the outer bimodule structure on $A\otimes A$. For example, $xE_1=\sum_{i,j}x_j(e_{i+1}\otimes e_i)=\sum_{i} x_i\otimes e_{i}$.
\end{rem}
Let $\{-,-\}$ denote the bracket $A\times A\to A$ defined by \eqref{sbra}. 
\begin{prop}\label{Prop:intcyclic}
We have the following identities in $A$ for all integers $a,b\ge 0$:
\begin{equation*}
\{x^a, x^b\}=0\,,\quad \{y^a, y^b\}=0\,, \quad \{(xy)^a, (xy)^b\}=0\,. 
  \end{equation*}
If we further localise $A$ by inverting $x$, then we also have 
\begin{equation*}
\{z^a, z^b\}=0\,,\quad z=y+x^{-1}\,. 
  \end{equation*}
\end{prop}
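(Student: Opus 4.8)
The plan is to reduce all four families to a single uniform computation of the self-bracket $\dgal{c,c}$, where $c\in\{x,y,xy,z\}$. The key algebraic input is that for any $c,d\in A$ the iterated Leibniz rule for the double bracket in its \emph{first} argument (for the inner bimodule structure) gives
\[
\dgal{c^a,d}=\sum_{r=0}^{a-1}\dgal{c,d}'\,c^{a-1-r}\otimes c^{r}\,\dgal{c,d}''\,,
\]
so that applying the multiplication map $m$ the middle powers recombine and
\[
\{c^a,d\}=m\dgal{c^a,d}=a\,\dgal{c,d}'\,c^{a-1}\,\dgal{c,d}''\,.
\]
Taking $d=c^b$ and using the Leibniz rule in the second argument, $\dgal{c,c^b}=\sum_{s=0}^{b-1}c^s\,\dgal{c,c}\,c^{b-1-s}$, this yields
\[
\{c^a,c^b\}=a\sum_{s=0}^{b-1}c^s\,M_a\,c^{b-1-s}\,,\qquad M_a:=\dgal{c,c}'\,c^{a-1}\,\dgal{c,c}''\,.
\]
Hence it suffices to prove $M_a=0$ for all $a\ge1$, and then $\{c^a,c^b\}=0$ for all $a,b\ge0$ follows at once (the case $a=b=1$ recovering $\{c,c\}=0$).

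It then remains to compute $\dgal{c,c}$ and verify $M_a=0$. For $c=x$ and $c=y$ the self-brackets are read off directly from Lemma \ref{cxy}, namely $\dgal{x,x}=\tfrac12(E_1x^2-x^2E_1)$ and $\dgal{y,y}=\tfrac12(y^2E_{-1}-E_{-1}y^2)$; in each case $\dgal{c,c}$ is the commutator of $c^2$ with a shift tensor $E_{\pm1}=\sum_i e_{\sigma(i)}\otimes e_i$. I would then compute $M_a$ by bookkeeping of vertex gradings: inserting $c^{a-1}$ between the two idempotent legs forces a path from $\sigma(i)$ back to $i$, which closes only when $m\mid a$, and when it does both terms of the commutator produce the same element $c^{a+1}$ (after reindexing the sum over $i\in\Z/m\Z$), so that $M_a=0$; when $m\nmid a$ both contributions vanish separately. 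The case $c=xy$ is handled the same way once $\dgal{xy,xy}$ is known: since $xy=\sum_i x_iy_i$ is supported on the diagonal idempotents, I expect $\dgal{xy,xy}$ to take the diagonal commutator form $\tfrac12\big(E_0(xy)^2-(xy)^2E_0\big)$ with $E_0=\sum_i e_i\otimes e_i$, whence $M_a=\tfrac12\sum_i\big((xy)_i^{a+1}-(xy)_i^{a+1}\big)=0$ with no divisibility condition required.

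For the localised statement with $z=y+x^{-1}$, I would first compute $\dgal{z,z}$ by expanding $\dgal{y+x^{-1},y+x^{-1}}$, using the inverse rule $\dgal{c,x^{-1}}=-x^{-1}\dgal{c,x}x^{-1}$ (and its first-argument analogue) together with the brackets of Lemma \ref{cxy}. I anticipate the outcome $\dgal{z,z}=\tfrac12(z^2E_{-1}-E_{-1}z^2)$, i.e.\ exactly the same shape as $\dgal{y,y}$ (this mirrors the tadpole identities \eqref{tadida} and \eqref{tadidaz}), after which the argument of the previous paragraph applies verbatim to give $M_a=0$ and hence $\{z^a,z^b\}=0$.

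The main obstacle is the explicit computation of $\dgal{xy,xy}$ (and, similarly, $\dgal{z,z}$): expanding by the two Leibniz rules produces many terms mixing $E_1$, $E_{-1}$ and the arrows, and one must simplify them using identities such as $xE_1=\sum_i x_i\otimes e_i$ and $E_1x=\sum_i e_{i+1}\otimes x_i$ and check that everything collapses to the clean shift-commutator form (or at least to terms annihilated by the insertion defining $M_a$). This step is routine but lengthy and is naturally relegated to an appendix; the conceptual content lies entirely in the reduction $\{c^a,c^b\}=a\sum_s c^sM_ac^{b-1-s}$ together with the vanishing $M_a=0$.
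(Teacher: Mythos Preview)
Your approach is correct and essentially parallels the paper's: both reduce to computing $\dgal{c,c}$ for $c\in\{x,y,xy,z\}$ and exploiting its ``commutator'' shape. The paper packages this slightly differently, via a general lemma: if $\dgal{a,a}=\sum_i(a^iE_i-E_ia^i)$ with each $E_i$ lying in a set $\mathcal E\subset A\otimes A$ stable (in a suitable sense) under $(-)^\circ$ and whose image under $m$ commutes with $a$, then $\{a^k,a^l\}=0$ for all $k,l$. Your formula $\{c^a,c^b\}=a\sum_s c^s M_a c^{b-1-s}$ with $M_a=\dgal{c,c}'\,c^{a-1}\,\dgal{c,c}''$ is a direct unpacking of the same mechanism, obtained via the inner Leibniz rule in the first slot rather than via $\dgal{a^k,a}=-\dgal{a,a^k}^\circ$ as in the paper.

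One correction: your anticipated form $\dgal{xy,xy}=\tfrac12\big(E_0(xy)^2-(xy)^2E_0\big)$ is not what the computation actually gives. The paper carries this out and finds
\[
\dgal{xy,xy}=xyE_0-E_0xy-\tfrac12\,E_0(xy)^2+\tfrac12\,(xy)^2E_0\,,
\]
so there is an additional linear commutator $[xy,E_0]$ on top of the quadratic one (and the sign on the quadratic part is opposite to your guess). This does not break your argument, since each piece $(xy)^iE_0-E_0(xy)^i$ separately contributes zero to $M_a$ by the same diagonal cancellation you describe, but you should not expect only the quadratic term to survive. Your expectation $\dgal{z,z}=\tfrac12(z^2E_{-1}-E_{-1}z^2)$, on the other hand, is exactly right and matches the paper's computation.
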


\begin{prop}
\label{Prop:dBrcyclic}
For any $a, b, c\ge 0$ with $a, b-1, c-1 \equiv 0 \mod m$, we have
  \begin{subequations}
       \begin{align}
\{x^a, yx^b\}&=ax^{a+b-1}+ayx^{a+b} \mod{[A, A]}\,,\label{Eq:cycfg}\\
\{yx^b, yx^c\}&=(b-c) yx^{b+c-1}+ \sum \limits_{t=1}^{b} yx^tyx^{b+c-t}- \sum \limits_{t=1}^{c} yx^tyx^{b+c-t} \mod{[A, A]}\,.\label{Eq:cycdgg}
   \end{align}
  \end{subequations}
\end{prop}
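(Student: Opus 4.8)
The plan is to treat both identities as the cyclic-quiver analogues of Proposition \ref{Prop:dBrTad}, proved for the tadpole in Appendix \ref{Ann:brackets}, and to adapt that computation. Two structural facts drive everything. First, the bracket $\{-,-\}=m\circ\dgal{-,-}$ of \eqref{sbra} is a derivation in its second argument (the left Loday identity \eqref{loday}), while the first argument is accessed only through the antisymmetry $\dgal{a,b}=-\dgal{b,a}^\circ$ of the double bracket. Second, and this is the genuinely new ingredient compared with $m=1$: in the path algebra any path whose tail and head differ lies in $[A,A]$, since $p=[e_{t(p)},p]$, so \emph{only closed paths survive modulo commutators}. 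The hypotheses $a\equiv 0$ and $b\equiv c\equiv 1\pmod m$ are exactly what makes $x^a$ diagonal and $yx^b$, $yx^c$, $yx^tyx^{b+c-t}$ closed, so that the right-hand sides are the surviving contributions.

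For \eqref{Eq:cycfg} I would first use the derivation property in the second argument together with $\{x^a,x^b\}=0$ from Proposition \ref{Prop:intcyclic} to obtain the \emph{exact} identity $\{x^a,yx^b\}=\{x^a,y\}\,x^b$. Then I compute $\{x^a,y\}=m\circ\dgal{x^a,y}$ by peeling the first argument: iterating antisymmetry and the second-argument Leibniz rule expresses $\dgal{x^a,y}$ as a sum over $k=0,\dots,a-1$ of copies of $\dgal{x,y}$ with $x^{a-1-k}$ appended to the first leg and $x^{k}$ to the second. Applying $m$ collapses every summand to the single insertion $u\,x^{a-1}v$ (for $\dgal{x,y}=\sum u\otimes v$), producing an overall factor $a$. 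Feeding in the five terms of \eqref{cycidb} and using $a\equiv 0\pmod m$ to recognise the resulting loops, the $\tfrac12 yxE_1$ and $-\tfrac12 yE_1x$ contributions cancel while $\tfrac12 E_1xy$ and $\tfrac12 xE_1y$ combine, leaving $\{x^a,y\}=a\,(x^{a-1}+x^{a}y)$. Multiplying by $x^b$ on the right and moving $x^a$ cyclically past $y$ modulo $[A,A]$ yields $a\,x^{a+b-1}+a\,yx^{a+b}$, which is \eqref{Eq:cycfg}.

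The identity \eqref{Eq:cycdgg} is the substantial part, the obstruction being that $yx^b$ occupies the first slot, where no Leibniz rule is available. Rather than expand $\dgal{yx^b,yx^c}$ head-on, I would pass to $z=y+x^{-1}$ in the localisation of $A$, whose double bracket with $x$ is the exact cyclic counterpart of \eqref{tadidaz}. The argument of Appendix \ref{Ann:brackets} then runs in parallel, now retaining only the closed-loop terms, and produces the tadpole-type identity $\{zx^b,zx^c\}\equiv\sum_{t=1}^{b}zx^tzx^{b+c-t}-\sum_{t=1}^{c}zx^tzx^{b+c-t}\ (\mathrm{mod}\ [A,A])$. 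Substituting $zx^t=yx^t+x^{t-1}$, reducing the mixed terms modulo commutators via cyclicity so that each $x^{s}yx^{t}$ becomes $yx^{s+t}$, and combining with \eqref{Eq:cycfg} for the cross term $\{x^{b-1},yx^c\}$ and with the antisymmetry $\{yx^b,x^{c-1}\}\equiv-\{x^{c-1},yx^b\}$ modulo $[A,A]$, the surplus $x^{b+c-2}$ terms cancel and the $yx^{b+c-1}$ terms collect into the single coefficient $(b-c)$, reproducing \eqref{Eq:cycdgg}.

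The main obstacle, and the point demanding the most care, is the bookkeeping modulo $[A,A]$ in this last step. Since $\{x^a,y\}$ and the various $z$-brackets are multiplied by further powers of $x$ or by $y$ \emph{before} one is allowed to discard non-closed paths, the reductions must be carried out in the correct order, and the $\Z/m\Z$-grading must be tracked throughout so that precisely the closed-loop terms on the right-hand sides remain.
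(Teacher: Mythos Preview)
Your treatment of \eqref{Eq:cycfg} is correct and coincides with the paper's: both use $\{x^a,yx^b\}=\{x^a,y\}x^b$ (exact, since $\{x^a,x^b\}=0$) and then compute $\{x^a,y\}=a(x^{a-1}+x^ay)$ directly from \eqref{cycidb}, reducing modulo $[A,A]$ at the end.

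For \eqref{Eq:cycdgg} your route genuinely diverges from the paper's. The paper never passes through $z$: it computes $\{yx^b,yx^c\}=\{yx^b,y\}x^c+y\{yx^b,x^c\}$ head-on, after first working out $\dgal{y,x^a}$, $\dgal{x^a,y}$, $\dgal{yx^a,y}$ and $\dgal{yx^b,x^a}$ explicitly. Only afterwards, in Appendix~\ref{sums}, is the $z$-identity \eqref{Eq:cycdggap} obtained \emph{from} \eqref{Eq:cycdgg} by substituting $y=z-x^{-1}$. You propose the reverse direction, and your back-substitution algebra is correct (the cross terms do collapse to $(b-c)yx^{b+c-1}$, and the antisymmetry $\{yx^b,x^{c-1}\}\equiv-\{x^{c-1},yx^b\}$ modulo $[A,A]$ is legitimate for any double bracket). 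Two points need attention, however.

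First, your appeal to ``the argument of Appendix~\ref{Ann:brackets} running in parallel'' is not quite a shortcut: in the paper the $z$-identities are derived from the $y$-identities, not independently. To make your route self-contained you must carry out the direct computation of $\{zx^b,zx^c\}$ from \eqref{xz} and \eqref{zz}, which is the same Leibniz-and-antisymmetry bookkeeping the paper does for $y$ (marginally lighter, since $\dgal{x,z}$ lacks the lone $E_1$ term). So you do not avoid the hard computation; you relocate it.

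Second, and more substantively: since $z\in A'$ but $z\notin A$, your argument establishes \eqref{Eq:cycdgg} only modulo $[A',A']$. Both sides of \eqref{Eq:cycdgg} lie in $A$, so you would need $A\cap[A',A']=[A,A]$ to conclude the statement as written, and you have not argued this. The paper's direct computation stays inside $A$ throughout and delivers the identity modulo $[A,A]$ without this extra step. For the downstream applications (traces on $\mathcal{C}_{n,q}^0(m)$, Proposition~\ref{IsoPssCyc}) the weaker conclusion modulo $[A',A']$ would in fact suffice, but it does not match the proposition as stated.
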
 
Proofs can be found in Appendix \ref{Ann:brackets}. 
 
\bigskip

For $\widetilde q=q_\infty e_\infty +\sum_{i\in I} q_ie_i$, the multiplicative 
preprojective algebra $\Lambda^{\widetilde q}$ is the quotient of $A$ by the relations: 
\begin{subequations}
       \begin{align}
&(e_i+y_{i-1}x_{i-1})^{-1}(e_i+x_iy_i)=q_ie_i\quad (i\ne 0)\,,\label{crel1}\\
&(e_0+y_{m-1}x_{m-1})^{-1}(e_0+x_0y_0)(e_0+vw)=q_0e_0\,,\label{crel2}\\
&(e_\infty+wv)^{-1}=q_\infty e_\infty\,.\label{crel3}
       \end{align}
  \end{subequations} 
 
Choose a dimension vector $\widetilde{\alpha}=(1, \alpha)$ where $\alpha\in (\N^\times)^{I}$ and set $q_\infty=q^{-\alpha}:=\prod_{i\in I}q_i^{-\alpha_i}$. Recall that $q\in(\CC^\times)^I$ is regular if $q^{\beta}\ne 1$ for any root of the (unframed) cyclic quiver. The roots for the cyclic quiver form the affine root system of type $\widetilde A_{m-1}$, therefore the regularity is equivalent to the following conditions:
\begin{equation*}
\prod_{i\le k \le j-1} q_k\ne t^n \ \text{for any $n\in\Z$ and $1\le i\le j\le m$, where}\quad t:=\prod_{i\in I}q_i\,.
\end{equation*}
We use the convention that the product in the left-hand side is empty when $i=j$, so in particular $t$ must not be a root of unity. Applying the results of \ref{Sec:mqv}, we have
\begin{prop}
For regular $q$, the variety $\mathcal M_{\alpha, q}=\Rep(\Lambda^{\qq}, \aalpha)//G(\aalpha)$, if non-empty, is smooth symplectic, of dimension $2p(\aalpha)=2\alpha_0 +2 \sum_{i\in I}\alpha_i(\alpha_{i+1}-\alpha_{i})$.
\end{prop}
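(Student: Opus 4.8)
The plan is to recognise this statement as the specialisation to the framed cyclic quiver $\widetilde Q$ of the general machinery recalled earlier, so that the only genuinely new computation is the explicit evaluation of $p(\aalpha)$. Accordingly I would split the argument into three parts: first, smoothness together with freeness of the action; second, the symplectic structure and the abstract dimension count $2p(\aalpha)$; and third, the explicit simplification of $p(\aalpha)$ to the stated polynomial in the $\alpha_i$.

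For the first part I would invoke Proposition~\ref{mqvar} directly. Since $q$ is assumed regular, that proposition guarantees that every $\Lambda^{\qq}$-module of dimension $\aalpha=(1,\alpha)$ is simple; combined with \eqref{gl}, which identifies $G(\aalpha)$ with $\Gl_\alpha$, it follows that $\Gl_\alpha$ acts freely on $\Rep(\Lambda^{\qq},\aalpha)$ and hence that $\mathcal M_{\alpha,q}$ is smooth. For the second part, I would apply Theorem~\ref{dim} to the framed quiver $\widetilde Q$ and the dimension vector $\aalpha$ (formed as in \eqref{frame}): when $\Rep(\Lambda^{\qq},\aalpha)$ is non-empty and all its representations are simple, the theorem yields that $\Rep(\Lambda^{\qq},\aalpha)$ is a smooth affine variety of dimension $g+2p(\aalpha)$ with $g=\dim G(\aalpha)$, that the quotient is obtained by quasi-Hamiltonian reduction, and that it is a Poisson manifold of dimension $2p(\aalpha)$. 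The non-degeneracy of this Poisson structure, noted after Theorem~\ref{dim} on the strength of \cite[8.3.1]{VdBquasiSpaces}, then upgrades the quotient to a symplectic manifold, so that $\mathcal M_{\alpha,q}$ is smooth symplectic of dimension $2p(\aalpha)$.

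The remaining, and most concrete, step is to compute $p(\aalpha)$ from its definition $p(\aalpha)=1+\sum_{a\in\widetilde Q}\aalpha_{t(a)}\aalpha_{h(a)}-\aalpha\cdot\aalpha$. The arrows of $\widetilde Q$ are the $m$ cyclic arrows $x_i:i\to i+1$ together with the single framing arrow $v:0\to\infty$, and $\aalpha_\infty=1$. The cyclic arrows contribute $\sum_{i\in I}\alpha_i\alpha_{i+1}$ (indices read modulo $m$), the framing arrow contributes $\alpha_0\alpha_\infty=\alpha_0$, while $\aalpha\cdot\aalpha=1+\sum_{i\in I}\alpha_i^2$. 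Substituting gives
\begin{equation*}
p(\aalpha)=1+\sum_{i\in I}\alpha_i\alpha_{i+1}+\alpha_0-\Big(1+\sum_{i\in I}\alpha_i^2\Big)=\alpha_0+\sum_{i\in I}\alpha_i(\alpha_{i+1}-\alpha_i)\,,
\end{equation*}
and doubling yields the asserted formula for $\dim\mathcal M_{\alpha,q}$.

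The argument presents no serious obstacle, since all of the substantive input is already packaged in Proposition~\ref{mqvar} and Theorem~\ref{dim}; the only points demanding care are bookkeeping ones. Specifically, one must check that the regularity hypothesis phrased for $q$ on the unframed cyclic quiver is exactly what Proposition~\ref{mqvar} consumes once $\qq$ and $\aalpha$ are formed as in \eqref{frame}, and that the cyclic index convention ($x_i:i\to i+1$ with $i\in\Z/m\Z$) is applied consistently so that each product $\alpha_i\alpha_{i+1}$ is counted exactly once. A minor subtlety worth flagging is that the head/tail convention affects which of $\alpha_{t(a)}$, $\alpha_{h(a)}$ appears first; since $p(\aalpha)$ only sees the symmetric product $\alpha_{t(a)}\alpha_{h(a)}$ this does not change the answer, but it is prudent to confirm this when checking the framing term $\alpha_0\alpha_\infty$.
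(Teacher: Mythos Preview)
Your proposal is correct and follows exactly the approach the paper takes: the paper simply states ``Applying the results of \ref{Sec:mqv}'' before this proposition, meaning it invokes Proposition~\ref{mqvar} and Theorem~\ref{dim} (together with the remark on non-degeneracy) without further comment. You have in fact supplied more detail than the paper does, by spelling out the computation of $p(\aalpha)$ explicitly.
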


A representation of $\Lambda^{\qq}$ of dimension $\alpha$ is a collection of vector spaces $\VV_\infty=\CC$ and $\VV_i=\CC^{\alpha_i}$, together with linear maps representing the arrows and satysfying the relations of \eqref{crel1} - \eqref{crel3}. Denote the matrices representing the arrows as $X_i, Y_i, V, W$, and set $X=X_0+\dots +X_{m-1}$, $Y=Y_0+\dots+ Y_{m-1}$. We can view $X,Y$ as linear endomorphisms of $\VV:=\oplus_{i\in I} \VV_i$. Proposition \ref{Prop:intcyclic} together with \eqref{trace} gives us
\begin{thm}
\label{Thm:cycInvol}
The following families of functions on $\mathcal M_{\alpha, q} $ are Poisson commuting:  
\begin{equation*}
\left\{\tr X^{jm}\,\, \big|\,\, j\in \N\right\}\,,\quad \left\{\tr Y^{jm}\big|\,\, j\in \N\right\}\,,\quad \left\{\tr (1+XY)^j\big|\,\, j\in \Z\right\}\,,\quad \left\{\tr (Y+X^{-1})^{jm}\big|\,\, j\in \Z\right\}\,,
\end{equation*}
where the last family is viewed on the open subset $\mathcal M_{\alpha, q}^{0}\subset\mathcal M_{\alpha, q}$ on which $X$ is invertible.
 \end{thm}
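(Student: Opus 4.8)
The plan is to deduce the theorem from the trace formula \eqref{trace} together with the involutivity established in Proposition~\ref{Prop:intcyclic}, handling the negative exponents by a separate Cayley--Hamilton argument. First I would observe that every function in the four families is the trace of an element of $A$ (or of the localisation of $A$ at $x$, for the last family). Since $X=\mathcal X(x)$ and $Y=\mathcal X(y)$ for $x=\sum_i x_i$, $y=\sum_i y_i$, and matrix multiplication on $\Rep$ corresponds to multiplication in $A$, we have $\tr X^{jm}=\tr(x^{jm})$, $\tr Y^{jm}=\tr(y^{jm})$, $\tr(1+XY)^{j}=\tr\big((1+xy)^{j}\big)$ and $\tr(Y+X^{-1})^{jm}=\tr(z^{jm})$ with $z=y+x^{-1}$. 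Here $1+xy=e_\infty+\sum_i(e_i+x_ix_i^{*})$ is invertible in $A$, and $z$ is invertible on $\mathcal M_{\alpha,q}^{0}$ because $XZ=1+XY$ gives $Z=X^{-1}(1+XY)$. All of these functions are $G(\aalpha)$-invariant.

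Next I would invoke the reduction. The Poisson bracket on $\mathcal M_{\alpha,q}$ is obtained by quasi-Hamiltonian reduction (Theorems~\ref{Thm:Poiss}, \ref{dim}), so the reduced bracket of two $G(\aalpha)$-invariant functions is computed by restricting the bracket \eqref{derr} on $\Rep(A,\aalpha)$ to invariants and descending to the quotient; for traces this is precisely the content of \eqref{trace}, namely $\{\tr a,\tr b\}=\tr\{a,b\}$ with $\{a,b\}=m\circ\dgal{a,b}$. In particular, if $\{a,b\}=0$ in $A$ then the reduced bracket of $\tr a$ and $\tr b$ vanishes identically. I would also record the grading remark: since $x$, $y$, $z$ are homogeneous of pure degree (respectively $+1$, $-1$, $-1$) for the $\Z/m\Z$-grading of $A$ by the idempotents $e_i$, while $xy$ has degree $0$, the traces $\tr x^{k}$, $\tr y^{k}$, $\tr z^{k}$ vanish unless $m\mid k$, which explains why only exponents divisible by $m$ appear in the first, second and fourth families.

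The commutativity for non-negative exponents is then immediate from Proposition~\ref{Prop:intcyclic}: for instance $\{\tr X^{am},\tr X^{bm}\}=\tr\{x^{am},x^{bm}\}=0$, and likewise for the $Y$- and $Z$-families. For the $(1+XY)$-family with $a,b\ge 0$ I would expand $(1+xy)^{a}$ binomially and use bilinearity of $\{-,-\}$ together with $\{1,-\}=\{-,1\}=0$ and $\{(xy)^{k},(xy)^{l}\}=0$, so that $\{(1+xy)^{a},(1+xy)^{b}\}=0$ in $A$ and hence the traces commute. The only remaining point is the negative exponents in the third and fourth families. Here $U:=1+XY$ is invertible on all of $\mathcal M_{\alpha,q}$, and $Z^{m}$ is invertible on $\mathcal M_{\alpha,q}^{0}$; by Cayley--Hamilton each $\tr U^{-j}$ (resp.\ $\tr(Z^{m})^{-j}=\tr Z^{-jm}$) is a polynomial in $\tr U^{1},\dots,\tr U^{N}$ (with $N=|\alpha|$) and $(\det U)^{-1}$, where $\det U$ is itself a polynomial in the $\tr U^{k}$. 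Since the functions $\{\tr U^{k}\}_{k\ge 0}$ (resp.\ $\{\tr Z^{km}\}_{k\ge 0}$) already pairwise Poisson commute and the bracket is a biderivation, the whole family over $j\in\Z$ is involutive.

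Granting Proposition~\ref{Prop:intcyclic}, whose verification is the genuinely computational part and is deferred to the appendix, the proof of the theorem is essentially bookkeeping around the trace formula. The only step requiring an idea beyond \eqref{trace} is the passage to negative exponents, and I expect this to be the main obstacle, albeit a mild one, since it rests on the invertibility of $U$ and $Z$ and on Cayley--Hamilton. Finally I would stress that the assertion is commutativity \emph{within} each family; cross-brackets such as $\{\tr X^{m},\tr Y^{m}\}$ are not claimed to vanish and in general do not.
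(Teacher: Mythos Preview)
Your argument is correct and follows exactly the route the paper takes: the paper's entire proof is the single sentence ``Proposition~\ref{Prop:intcyclic} together with \eqref{trace} gives us [the theorem]'', and you are spelling out what that sentence means. Your treatment of the negative exponents via Cayley--Hamilton, and of $(1+xy)^a$ via binomial expansion plus $\{(xy)^k,(xy)^l\}=0$, fills in details the paper leaves implicit; an alternative for the former is to observe that $(1+xy)^{-1}$ already lives in $A$ and that the derivation property of $\{-,-\}$ in the second argument forces $\{u^a,u^{-1}\}=-u^{-1}\{u^a,u\}u^{-1}=0$, which then handles all integer powers without passing to the representation level.
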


We can integrate explicitly the associated Hamiltonian flows in the most interesting cases:

\begin{prop}\label{dynam}
Let $t$ denote the time flow associated to $H_k:=\frac1k\tr Y^k$, $k\in m\N$. Given an initial position $X(0), Y(0), V(0), W(0)$ on $\mathcal M_{\alpha, q}$, the solution $X,Y,V,W$ at time $t$ is given by   
\begin{equation*}
X(t)=e^{-tY^k}X(0)+Y^{-1}(e^{-tY^k}-1)\,,\quad Y(t)=Y(0)\,,\quad V(t)=V(0)\,,\quad W(t)=W(0)\,.
\end{equation*}
Similarly, if $t$ denotes the time flow associated to $G_k:=\frac1k \tr Z^k$ with $Z=Y+X^{-1}$, then the solution at time $t$ is given by
\begin{equation*}
X(t)=e^{-tZ^k}X(0)\,,\quad Z(t)=Z(0)\,,\quad V(t)=V(0)\,,\quad W(t)=W(0)\,.
\end{equation*}
In both cases the flows are complete: in the first case on the whole of $\mathcal M_{\alpha, q}$, and in the second case on the open subset $\mathcal M_{\alpha, q}^{0}\subset \mathcal M_{\alpha, q}$ where $X$ is invertible.
\end{prop}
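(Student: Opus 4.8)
The plan is to convert each Hamiltonian flow into an explicit first-order ODE for the matrices $X,Y,V,W$ and then solve it. Since $H_k=\tfrac1k\tr Y^k=\tfrac1k\tr\mathcal{X}(y^k)$ is $\Gl_\alpha$-invariant, its Hamiltonian vector field for the quasi-Poisson bracket is tangent to the moment-map level set $\Rep(\Lambda^{\qq},\aalpha)$ and descends to the genuine Hamiltonian flow on the reduction $\mathcal M_{\alpha,q}$ (this is precisely the content of the quasi-Hamiltonian reduction, Theorem \ref{Thm:Poiss} and \S\ref{Sec:mqv}); hence it suffices to integrate the vector field on $\Rep(A,\aalpha)$. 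For a matrix coordinate $b_{uv}$ one has, combining \eqref{derr} with the definition \eqref{sbra} of the single bracket $\{a,b\}=m\circ\dgal{a,b}$, the identity $\{\tr a,\,b_{uv}\}=\mathcal{X}(\{a,b\})_{uv}$ (sum over the repeated inner index, using multiplicativity of $\mathcal{X}$). Thus the equations of motion under $\dot f=\{H_k,f\}$ read $\tfrac{d}{dt}\mathcal{X}(b)=\tfrac1k\mathcal{X}(\{y^k,b\})$ for each arrow $b\in\{x,y,v,w\}$.

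The key simplification is a closed formula for the single bracket with a power. First I would record the first-argument Leibniz rule
\[
\dgal{a_1a_2,b}=\dgal{a_1,b}'a_2\otimes\dgal{a_1,b}''+\dgal{a_2,b}'\otimes a_1\dgal{a_2,b}''\,,
\]
obtained from the derivation property in the last argument together with the cyclic antisymmetry $\dgal{a,b}=-\dgal{b,a}^\circ$. Iterating it for $a_1=\dots=a_k=y$ gives $\dgal{y^k,b}=\sum_{r=0}^{k-1}\dgal{y,b}'y^{k-1-r}\otimes y^r\dgal{y,b}''$, and applying $m$ collapses every summand to the same element, yielding
\[
\{y^k,b\}=k\,\dgal{y,b}'\,y^{k-1}\,\dgal{y,b}''\,.
\]
Feeding in the brackets of Lemma \ref{cxy} (together with $\dgal{y,v}$ and $\dgal{y,w}$ computed from Proposition \ref{Pr:dbr}) and simplifying with the idempotent identity $e_iy=ye_{i-1}$, I expect to find $\{y^k,y\}=0$, $\{y^k,v\}=\{y^k,w\}=0$, and $\{y^k,x\}=-k\bigl(y^{k-1}+y^kx\bigr)$, so that the equations of motion become $\dot Y=\dot V=\dot W=0$ and $\dot X=-Y^kX-Y^{k-1}$.

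With $Y,V,W$ conserved, the equation for $X$ is linear with coefficients constant in $t$, and one checks by direct differentiation that $X(t)=e^{-tY^k}X(0)+Y^{-1}(e^{-tY^k}-1)$ solves it with the correct initial value. Here $Y^{-1}(e^{-tY^k}-1)=\sum_{j\ge1}\tfrac{(-t)^j}{j!}Y^{jk-1}$ is a genuine power series in $Y$ (all exponents $jk-1\ge k-1\ge m-1\ge1$ are nonnegative since $k\in m\N$ and $m\ge2$), so the formula is defined and entire in $t$ even where $Y$ is singular; completeness on all of $\mathcal M_{\alpha,q}$ follows. The second flow is entirely parallel: applying the same power formula with $z=y+x^{-1}$ in place of $y$, Proposition \ref{Prop:intcyclic} gives $\dot Z=0$, while the analogue of the computation above yields $\dot X=-Z^kX$ and $\dot V=\dot W=0$, integrated by $X(t)=e^{-tZ^k}X(0)$; since $e^{-tZ^k}$ is invertible for all $t$, this flow preserves the open locus $\mathcal M_{\alpha,q}^{0}$ where $X$ is invertible and is complete there.

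The hard part is the middle step: evaluating $\{y^k,b\}$ and $\{z^k,b\}$ as honest elements of $A$, not merely modulo $[A,A]$, since the Poisson-commutativity statements of Proposition \ref{Prop:intcyclic} control only the latter. This forces careful use of the non-derivation behaviour of $\dgal{-,-}$ in its first slot and of the $\Z/m\Z$-grading by vertices; the power formula above is exactly what makes it tractable, reducing each case to a handful of idempotent manipulations.
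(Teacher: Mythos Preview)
Your approach is essentially the same as the paper's, and your ``power formula'' $\{y^k,b\}=k\,\dgal{y,b}'y^{k-1}\dgal{y,b}''$ is a clean way to package the computation that the paper carries out term by term in Appendix \ref{Ann:dynamics}. One small point you should not skip: completeness requires not only that $X(t)$ is given by an entire formula but also that the trajectory stays inside $\Rep(A,\aalpha)$, i.e.\ that the factors $\Id_n+X_iY_i$, $\Id_n+Y_iX_i$ remain invertible for all $t$; the paper checks this via $1+YX=e^{-tY^{k}}(1+YX(0))$, and you should include the analogous one-line verification.
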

The proof of the proposition is given in Appendix \ref{Ann:dynamics}. 
\bigskip

We will be particularly interested in the special choice of the dimension vector $\alpha_i=n$ for all $i\in I$, because only in this case $\mathcal M_{\alpha, q}^{0}$ is nonempty. Accordingly, we choose $q\in (\CC^\times)^I$ and set $q_\infty=t^{-n}$, where $t=\prod_{i\in I}q_i$. With this choice, points of $\Rep(\Lambda^\qq, \aalpha)$ are represented by a collection of $X_i,Y_i, V,W$,
\begin{equation*}
  X_i, Y_i\in \Mat_{n\times n}(\CC),\quad V\in \Mat_{n\times 1}(\CC),\quad W\in \Mat_{1\times n}(\CC)\,,
\end{equation*}
satisfying 
\begin{subequations}
       \begin{align}
&(\Id_n+Y_{i-1}X_{i-1})^{-1}(\Id_n+X_iY_i)=q_i\Id_n\quad (i\ne 0)\,,\label{cmrel1}\\
&(\Id_n+Y_{m-1}X_{m-1})^{-1}(\Id_n+X_0Y_0)(\Id_n+VW)=q_0\Id_n\,,\label{cmrel2}\\
&(1+WV)^{-1}=t^{-n}\,.\label{cmrel3}
       \end{align}
  \end{subequations} 
Here all factors are assumed invertible, so the last relation can be obtained from the others after taking determinants. The group $G:=\Gl_n^m$ acts on these linear data by
\begin{equation*}
X_i\mapsto g_iX_ig_{i+1}^{-1}\,,\quad Y_i\mapsto g_{i+1}Y_ig_{i}^{-1}\,,\quad  V\mapsto g_0 V\,,\quad W\mapsto Wg_0^{-1}\,,
\end{equation*}
for any $g=(g_0, \dots, g_{m-1})\in G$. 
The space of equivalence classes of such linear data will be referred to as the \emph{Calogero--Moser space} $\mathcal{C}_{n,q}(m)$, or simply $\mathcal{C}_{n,q}$ when it does not lead to a confusion.
This is a special case of a multiplicative quiver variety, so we can apply Theorem \ref{dim} and Proposition \ref{mqvar}.
\begin{prop}
For regular $q$, the Calogero--Moser space $\mathcal{C}_{n,q}(m)$ is a smooth symplectic variety of dimension $2n$.
\end{prop}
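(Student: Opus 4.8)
The plan is to recognise $\mathcal{C}_{n,q}(m)$ as a special instance of the multiplicative quiver variety $\mathcal{M}_{\alpha, q}(Q)$ for the cyclic quiver with $m$ vertices, namely the one attached to the dimension vector $\alpha=(n,\dots,n)$, i.e. $\alpha_i=n$ for every $i\in I=\Z/m\Z$. By construction the equivalence classes of linear data $(X_i,Y_i,V,W)$ satisfying \eqref{cmrel1}--\eqref{cmrel3} modulo the $G=\Gl_n^m$-action are precisely the points of $\Rep(\Lambda^{\qq},\aalpha)//G(\aalpha)$ with $\aalpha=(1,\alpha)$, so $\mathcal{C}_{n,q}(m)=\mathcal{M}_{\alpha,q}(Q)$, and all the general results of Section \ref{Sec:mqv} apply verbatim. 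First I would invoke Proposition \ref{mqvar}: since $q$ is regular, every $\Lambda^{\qq}$-module of dimension $\aalpha$ is simple, so $\Gl_\alpha$ acts freely on $\Rep(\Lambda^{\qq},\aalpha)$ and the quotient $\mathcal{C}_{n,q}(m)$ is smooth. That the quasi-Hamiltonian reduction produces not merely a Poisson but a genuinely symplectic variety follows, as for the general $\mathcal{M}_{\alpha,q}$, from Theorem \ref{dim} together with the non-degeneracy of the reduced bracket established in \cite[8.3.1]{VdBquasiSpaces}.

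It then remains to compute the dimension, which is the only place where the specific choice $\alpha_i=n$ enters. Using the dimension formula $2p(\aalpha)=2\alpha_0+2\sum_{i\in I}\alpha_i(\alpha_{i+1}-\alpha_i)$ recorded earlier for the framed cyclic quiver, I would substitute $\alpha_i=n$ for all $i$: the first term becomes $2\alpha_0=2n$, while each summand $\alpha_i(\alpha_{i+1}-\alpha_i)=n(n-n)$ vanishes identically. Hence $\dim\mathcal{C}_{n,q}(m)=2p(\aalpha)=2n$, as claimed. This specialisation is entirely routine once the general machinery is in place.

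The one genuine subtlety, and the point I expect to require the most care, is non-emptiness: the assertion that $\mathcal{C}_{n,q}(m)$ \emph{is} smooth symplectic of dimension $2n$ is vacuous unless the variety is non-empty, whereas the general statement only guarantees this conclusion \emph{conditionally}. This is in fact the very reason for singling out the dimension vector $\alpha_i=n$, since it is the unique choice for which the open locus $\mathcal{M}^{0}_{\alpha,q}$ (where $X$ is invertible) is non-empty. I would settle this by appealing to the explicit Darboux-coordinate construction of the following subsection, which exhibits honest points of $\mathcal{C}_{n,q}(m)$ and thereby shows the space is non-empty of the expected dimension; alternatively one can verify directly that the relations \eqref{cmrel1}--\eqref{cmrel3} admit solutions for regular $q$.
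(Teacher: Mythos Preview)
Your proposal is correct and follows essentially the same route as the paper: the paper's entire justification is the sentence immediately preceding the proposition, ``This is a special case of a multiplicative quiver variety, so we can apply Theorem \ref{dim} and Proposition \ref{mqvar}'', together with the dimension formula $2p(\aalpha)=2\alpha_0+2\sum_{i\in I}\alpha_i(\alpha_{i+1}-\alpha_i)$ recorded just above. Your explicit substitution $\alpha_i=n$ to get $2n$, and your remark that non-emptiness is supplied by the explicit coordinate construction in the following subsection, are exactly what the paper leaves implicit.
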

In the next subsection we will introduce local coordinates on this space and calculate the Poisson bracket explicitly.

\subsection{Coordinates and Poisson bracket} \label{ss:CycCoord} 
We choose a regular set of parameters $q\in (\CC^\times)^I$; it will be convenient to introduce a separate notation for the quantities   
\begin{equation*}
t_s:=\prod_{0\le i\le s} q_i\quad (s=0,\dots, m-1)\,.   
\end{equation*}
We will keep using the symbol $t$ for $t_{m-1}=q_0\dots q_{m-1}$. Let us define a family of representations of $\Lambda^{\widetilde q}$ with $\widetilde q=(t^{-n}, q_0,\dots, q_{m-1})$ and of dimension $\aalpha=(1, n, \dots, n)$. We will assume that $X_i$ are invertible, so we can use $Z_i=Y_i+X_i^{-1}$ to rewrite the relations \eqref{cmrel1}--\eqref{cmrel3} as
\begin{subequations}
       \begin{align}
&(Z_{i-1}X_{i-1})^{-1}X_iZ_i=q_i\Id_n\quad (i\ne 0)\,,\label{cmrel1z}\\
&(Z_{m-1}X_{m-1})^{-1}X_0Z_0(\Id_n+VW)=q_0\Id_n\,,\label{cmrel2z}\\
&(1+WV)^{-1}=t^{-n}\,.\label{cmrel3z}
       \end{align}
  \end{subequations} 
Then by changing bases we can achieve that $X_s=\Id_n$ for $s=0,\dots, m-2$. If we introduce $A:=X_{m-1}$ and $B:=q_0^{-1}Z_0$, then from \eqref{cmrel1z} we find that
$Z_s=t_sB$ for $s=0,\dots, m-2$ and $Z_{m-1}=tA^{-1}B$. Using this in \eqref{cmrel2z} gives that $A^{-1}B^{-1}AB(1+VW)=t\Id_n$, which we can rewrite as
\begin{equation}\label{abcm}
ABA^{-1}B^{-1}(1+\widetilde V\widetilde W)=t\Id_n\,,
\end{equation}
where $\widetilde V=BAV$ and $\widetilde W=WB^{-1}A^{-1}$.
It is easy to see that isomorphic representation of $\Lambda^{\widetilde q}$ produce isomorphic quadruples $(A, B, \widetilde V, \widetilde W)$, up to the equivalence \eqref{gact}. Therefore, we obtain
\begin{prop}[cf. \cite{BEF}, Section 5.2] \label{isocm} 
Let $\mathcal C_{n,q}^0(m)$ and $\mathcal C_{n,t}^0$ be the Calogero--Moser spaces of isomorphism classes of linear data \eqref{cmrel1z}--\eqref{cmrel3z} and \eqref{abcm}, respectively. We assume that the parameters are regular, so both spaces are smooth varieties. Then the map $\xi$ sending $(A,B, \widetilde V, \widetilde W)$ to $X_s=\Id_n$, $Z_s=t_sB$ for $s=0,\dots, m-2$ and $X_{m-1}=A$, $Z_{m-1}=tA^{-1}B$, $V=A^{-1}B^{-1}\widetilde V$, $W=\widetilde WAB$ defines an isomorphism of these varieties. In particular, $\mathcal C_{n,q}^0(m)$ is connected because so is $\mathcal C_{n,t}^0$.   
\end{prop}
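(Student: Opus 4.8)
The plan is to prove the statement by exhibiting an explicit inverse $\eta$ to $\xi$ and checking that both $\xi$ and $\eta$ are morphisms of the affine GIT quotients; being mutually inverse, $\xi$ is then an isomorphism of varieties, and connectedness of $\mathcal C^0_{n,t}$ (the tadpole Calogero--Moser space of Section \ref{tadpole}, connected by \cite{Oblomkov} with $q_0=t$) transfers to $\mathcal C^0_{n,q}(m)$.

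First I would check that $\xi$ is well defined. On $\mathcal C^0_{n,t}$ the data satisfy $A,B\in\Gl_n$, so the tuple $\xi(A,B,\widetilde V,\widetilde W)$ has $X_s=\Id_n$ and $X_{m-1}=A$ all invertible and hence lies in the open locus of $\Rep(\Lambda^{\qq},\aalpha)$. Substituting $Z_s=t_sB$ and $Z_{m-1}=tA^{-1}B$, the relations \eqref{cmrel1z} for $i\ne0$ collapse to the scalar identities $t_s=t_{s-1}q_s$ and are immediate; relation \eqref{cmrel2z} is exactly \eqref{abcm} after the conjugation carried out in the discussion preceding the statement; and \eqref{cmrel3z} follows by multiplying the determinants of \eqref{cmrel1z}--\eqref{cmrel2z}, which telescope to $\det(\Id_n+VW)=t^n$ since the determinant of a commutator is $1$. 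Finally, if two inputs differ by $h\in\Gl_n$ as in \eqref{gact}, their images differ by the diagonal element $(h,\dots,h)\in G=\Gl_n^m$, so $\xi$ descends to the quotients and is manifestly regular.

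For the inverse I would make the gauge-fixing implicit in the text manifestly $G$-covariant, so that it is visibly a morphism on the quotient. Given a representative $(X_i,Z_i,V,W)$ with all $X_i$ invertible, the gauge $g_0=\Id_n$, $g_i=X_0X_1\cdots X_{i-1}$ normalises $X_s=\Id_n$ for $s\le m-2$; this identifies the normalised $X_{m-1}$ and $Z_0$ with the $G$-covariant expressions $A:=X_0X_1\cdots X_{m-1}$ and $B:=q_0^{-1}X_0Z_0$. Under $G$ each of $A$, $B$ (and the vectors $\widetilde V,\widetilde W$ built from $A,B,V,W$) transforms by conjugation, respectively as a vector/covector, by $g_0$ alone, so they depend only on the $G$-orbit and are polynomial in the entries of $X_i,Z_i,V,W$ and in $\det X_i^{-1}$. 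This defines a morphism $\eta:\mathcal C^0_{n,q}(m)\to\mathcal C^0_{n,t}$, with \eqref{abcm} being precisely \eqref{cmrel1z}--\eqref{cmrel2z} rewritten in these invariants.

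It then remains to verify that $\eta$ and $\xi$ are mutually inverse. The identity $\eta\circ\xi=\mathrm{id}$ is a direct substitution: $\xi$ outputs $X_s=\Id_n$, $X_{m-1}=A$ and $Z_0=q_0B$, whence $X_0\cdots X_{m-1}=A$ and $q_0^{-1}X_0Z_0=B$, recovering the input. For $\xi\circ\eta=\mathrm{id}$, the gauge transformation above carries any representative into the slice that is the image of $\xi$, with parameters $\eta$ of that point, so $\xi(\eta(-))$ is $G$-equivalent to the original. I expect the only genuinely delicate point to be the morphism property of $\eta$: a priori the normalisation depends on a choice of representative and involves inverting matrices, and the resolution is exactly the observation that $A=X_0\cdots X_{m-1}$ and $B=q_0^{-1}X_0Z_0$ are already $G$-covariant polynomial expressions, which removes the gauge dependence and makes regularity automatic. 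The remaining bookkeeping between \eqref{cmrel1z}--\eqref{cmrel3z} and \eqref{abcm} is routine matrix algebra. Once $\xi$ is an isomorphism, the final assertion on connectedness is immediate.
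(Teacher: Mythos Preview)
Your proposal is correct and follows essentially the same approach as the paper: the paper's proof is really the paragraph preceding the proposition, where the inverse is obtained by gauge-fixing $X_s=\Id_n$ for $s\le m-2$, setting $A=X_{m-1}$, $B=q_0^{-1}Z_0$, and observing that isomorphic representations yield equivalent $(A,B,\widetilde V,\widetilde W)$. You make one point more explicit than the paper does, namely that $A=X_0\cdots X_{m-1}$ and $B=q_0^{-1}X_0Z_0$ are already $G$-covariant regular expressions (transforming by $g_0$-conjugation), which cleanly shows that $\eta$ is a morphism of the GIT quotients rather than merely a set-theoretic inverse.
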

In fact, a stronger claim is true.  Recall, that both spaces are Poisson varieties. 
 
\begin{prop}\label{IsoPssCyc}
\label{Thm:IsoPssCyc}
The isomorphism $\xi:\, \mathcal C_{n,q}^0 \to \mathcal{C}_{n,q}^0(m)$ from the above proposition is a Poisson map. 
\end{prop}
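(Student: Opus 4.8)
The plan is to follow the template of the proofs of Propositions~\ref{Thm:IsoPssTad} and~\ref{duality}: since a Poisson bracket is determined by its values on a generating family of functions, it suffices to exhibit such a family on the cyclic side $\mathcal C_{n,q}^0(m)$, to compute the brackets between its members, and to check that $\xi$ intertwines them with the corresponding brackets on $\mathcal C_{n,t}^0$. On both spaces the ring of invariant functions is generated by traces of words in the two matrices $X,Z$ (respectively $A,B$): indeed, the moment map relations \eqref{cmrel1z}--\eqref{cmrel3z} and \eqref{abcm} express the rank-one framing contribution $VW$ (respectively $\widetilde V\widetilde W$) as a word in $X,Z$ (respectively $A,B$), so the framing variables never yield independent invariants. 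Because of the $\Z/m\Z$-grading a word in $X,Z$ has nonzero trace only when its total degree vanishes mod $m$, so I would single out the functions $f_j:=\tr X^{jm}$ and $g_j:=\tr(ZX^{jm+1})$ for $1\le j\le n$; these are the grading-adapted analogues of the functions $\tr X^{a}$, $\tr(ZX^{b})$ used in Proposition~\ref{Thm:IsoPssTad}, and via the isomorphism $\xi$ of Proposition~\ref{isocm} they form local coordinates at a generic point, so verifying the Poisson property on them is enough.

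First I would compute the pullbacks under the explicit $\xi$ of Proposition~\ref{isocm}. Using $X_s=\mathrm{Id}$, $Z_s=t_sB$ for $s\le m-2$, $X_{m-1}=A$, $Z_{m-1}=tA^{-1}B$, together with the fact that $X^m$ restricts to $A$ on every graded piece, a short block computation gives the clean formulas $\xi^*f_j=m\,\tr A^j$ and $\xi^*g_j=c\,\tr(BA^j)$ with $c=\sum_{s=0}^{m-1}t_s$, and more generally that $\xi^*\tr(ZX^{k_1}ZX^{k_2}\cdots)$ is a grading-weighted trace of the corresponding word in $A,B$. The book-keeping point here is that the cyclic invariance of the trace absorbs the stray factors $A^{\pm1}$ produced by $Z_{m-1}=tA^{-1}B$, so that each pullback really is a scalar multiple of a single word-trace in $A,B$.

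Next I would compare the brackets. On $\mathcal C_{n,q}^0(m)$ they follow from \eqref{trace} and Propositions~\ref{Prop:intcyclic},~\ref{Prop:dBrcyclic}, rewritten in the variable $z=y+x^{-1}$; exactly as in the tadpole case this rewriting turns the formulas of Proposition~\ref{Prop:dBrcyclic} into the clean shape of Proposition~\ref{Prop:dBrTad} modulo $[A,A]$ (the linear and single-$Z$ correction terms cancelling). On $\mathcal C_{n,t}^0$ they follow from \eqref{trace} and Propositions~\ref{Prop:int},~\ref{Prop:dBrTad}. The brackets $\{f_i,f_j\}=0$ and $\{f_i,g_j\}=im\,g_{i+j}$ then match under $\xi$ immediately: both sides give $0$ and $imc\,\tr(BA^{i+j})$ respectively.

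The genuine obstacle is the bracket $\{g_i,g_j\}$. On the cyclic side \eqref{trace} and Proposition~\ref{Prop:dBrcyclic} produce a sum of traces $\tr(ZX^{k}ZX^{\,(i+j)m+2-k})$ in which the intermediate exponent $k$ runs over a full range of residues mod $m$, whereas the tadpole answer of Proposition~\ref{Prop:dBrTad} is the clean sum $\sum_r\tr(BA^rBA^{\,i+j-r})$. The naive guess fails here: a single factor $Z$ carries total grading-weight $c=\sum_s t_s$, but the two $Z$-factors confined to a fixed graded piece carry weight $\sum_s t_s^2\ne c^2$. The resolution, which I expect to be the heart of the argument, is that summing the pullback weights over a \emph{complete} residue system of $k$ modulo $m$ decouples the weights of the two $Z$-factors, so that each resulting term $\tr(BA^rBA^{\,i+j-r})$ acquires total weight $\bigl(\sum_s t_s\bigr)^2=c^2$; this matches $\{\xi^*g_i,\xi^*g_j\}=c^2\{\,\tr(BA^i),\tr(BA^j)\}$ computed on $\mathcal C_{n,t}^0$. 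I expect essentially all of the real work — and the only place where the precise values of the $t_s$ enter — to be this residue-class decoupling, a direct but somewhat delicate combinatorial verification, the vanishing of the correction terms modulo $[A,A]$ being handled just as in the passage from Proposition~\ref{Prop:dBrcyclic} to its $z$-form.
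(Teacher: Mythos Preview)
Your proposal is correct and follows essentially the same route as the paper's proof in Appendix~\ref{sums}: the same coordinate functions $f_\alpha=\tr X^{\alpha m}$, $g_\beta=\tr(ZX^{1+\beta m})$, the same pullback formulas $\xi^*f_\alpha=m\,\tr A^\alpha$ and $\xi^*g_\beta=\tau\,\tr BA^\beta$ with $\tau=\sum_s t_s$, and the same identification of the $\{g,g\}$ bracket as the crux, resolved by the decoupling identity $\sum_{i,j\in I}t_it_{i+j}=\tau^2$. The one point you skate over is that the pullback $\xi^*h_{r,s}$ is not a single word-trace with a scalar weight: the power of $A$ in each summand depends on a boundary-crossing count $\phi(i,i+r)\in\{0,1\}$, so the sum over a full residue class of $r$ splits into two pieces contributing to $\tr(BA^pBA^{\beta+\gamma-p})$ and $\tr(BA^{p+1}BA^{\beta+\gamma-p-1})$ respectively; one first checks that the two $p$-sums coincide (via $h_{r,s}=h_{s,r}$ and an index shift) before the weights recombine to $\tau^2$ --- but this is exactly the ``somewhat delicate combinatorial verification'' you flagged.
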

A proof can be found in Appendix \ref{sums}.

As a result, we can construct canonical Darboux coordinates on $\mathcal C_{n,q}^0(m)$ by transferring them from $\mathcal C_{n,t}^0$. Namely, let us choose $A, B$ as suggested by \eqref{Eq:TadSigm_i}, \eqref{RSL}:
\begin{equation}\label{xysCyc}
A=\diag(x_1,\ldots, x_n)\,,\quad B_{ij}=\, \sigma_j \,\frac{t-1}{t-x_ix_j^{-1}}\,\prod_{k\neq j} \,\frac{1-tx_jx_k^{-1}}{1-x_jx_k^{-1}}\,.
\end{equation}
Then \eqref{abcm} determines $\widetilde V, \widetilde W$ (uniquely, up to a simultaneous rescaling). By mapping these $A,B, \widetilde V, \widetilde W$ to $X_s, Z_s, V, W$ as described in Proposition \ref{isocm}, we obtain local coordinates $x=(x_1, \dots, x_n)$, $\sigma=(\sigma_1, \dots, \sigma_n)$ on $\mathcal C_{n,q}^0(m)$. Then the results of Section \ref{tadpole} combined with Proposition \ref{IsoPssCyc} tell us that 
\begin{equation*}
\{x_i,x_j\}=0 \, , \quad
\{x_i,\sigma_j\}=\delta_{ij} x_i \sigma_j\, , \quad
 \{\sigma_i,\sigma_j\}=0\,. 
 \end{equation*}

\medskip

\begin{rem} By \cite[Theorem 5.17]{BEF}, the Calogero--Moser space $\mathcal C_{n,q}(m)$ is connected. Therefore, $\mathcal C_{n,q}^0(m)$ is a dense open subset, so the above coordinates $x, \sigma$ can be used as local coordinates on both spaces. 
\end{rem}

\subsection{Generalised  Ruijsenaars--Schneider models}  \label{ss:CycRS}
Having obtained canonical coordinates on the Calogero--Moser space $\mathcal C_{n, q}(m)$, we can now turn to the Hamiltonians from Theorem \ref{Thm:cycInvol}.
The Hamiltonians $E_{j,m}:=\tr(X^{jm})=m\tr (X_0\dots X_{m-1})^j=m\tr A^j=m\sum_{i=1}^n x_i^j$ are trivial. Next, we have
\begin{equation*} 
F_{m,j}:=\tr(1+XY)^j=\sum_{i\in I} \tr(\Id_n+X_iY_i)^j=\sum_{i\in I} \tr(X_iZ_i)^j=\sum_{i\in I} (t_s)^j \tr B^j\,. 
\end{equation*}
Since $B$ is the Lax matrix for the Ruijsenaars--Schneider model, we do not find anything new here. Now let us look at
\begin{equation*} 
G_{m,j}:= \tr(Y+X^{-1})^{jm}= m \tr\, \left(Z_{m-1} \dots Z_0\right)^j=(t_0\dots t_{m-1})^j \,\tr(A^{-1}B^m)^j\,. 
\end{equation*}  
Ignoring the constant factor, the simplest Hamiltonian is $G_{m,1}:=\tr ( A^{-1} B^m)$, with $A, B$ given by \eqref{xysCyc}; this formula also makes sense for $m=0,1$.
Here are explicit formulas for $G_{m,1}$ with $m\le 3$, where we use the shorthand notation $\displaystyle{\Upsilon_{ij}\,=\,\frac{1-tx_ix_j^{-1}}{1-x_ix_j^{-1}}\,=\,\frac{t-x_jx_i^{-1}}{1-x_jx_i^{-1}}}$:
 \begin{subequations}
       \begin{align}
G_{0,1}=&\sum_{i=1}^n x_i^{-1}\,,\qquad G_{1,1}=\sum_{i=1}^n \sigma_i x_i^{-1}\prod_{a\ne i}\Upsilon_{ia}\,,
\\
G_{2,1}=& \sum_{i=1}^n \, \sigma_i^2 x_i^{-1}\,\prod_{a\neq i} \Upsilon_{ia}^2
\,+ \sum_{i<j}^n\, \sigma_i\sigma_j\frac{ (t-1)^2(x_i^{-1}+x_j^{-1}) } {(1-x_ix_j^{-1})(1-x_jx_i^{-1})}\,
\,\prod_{a\ne i, j} \Upsilon_{ia}\Upsilon_{ja}\,, \label{k21}\\
G_{3,1}=&\sum_{i=1}^n \, \sigma_i^3x_i^{-1}\,\prod_{a\neq i} \Upsilon_{ia}^3\,
+\sum_{i\neq j}^n\sigma_i\sigma_j^2\frac{(t-1)^2(x_i^{-1}+2x_j^{-1})}{(t-x_i x_j^{-1})(t-x_jx_i^{-1})}\,
\,\prod_{a\neq i} \Upsilon_{ia}\,\prod_{b\neq j} \Upsilon_{jb}^2 \nonumber \\
&+\sum_{i\neq j\neq k} \, \sigma_i\sigma_j\sigma_k\frac{(t-1)^3x_i^{-1}}
{(t-x_i x_j^{-1})(t-x_jx_k^{-1})(t-x_k x_i^{-1})}\,
\,\prod_{a\neq i} \Upsilon_{ia}\,\prod_{b\neq j} \Upsilon_{jb}\,\prod_{c\neq k} \Upsilon_{kc}\,.
       \end{align}
  \end{subequations}
The general formula is
\begin{equation}\label{gexp}
G_{m,1}=\sum_{1\le j_0,\dots, j_{m-1}\le n} \, (\sigma_{j_0}\dots \sigma_{j_{m-1}})x_{j_0}^{-1}\prod_{s=0}^{m-1}\frac{t-1}
{t-x_{j_s} x_{j_{s+1}}^{-1}}\,\prod_{s=0}^{m-1}
\,\prod_{a\neq j_s}^n \Upsilon_{j_sa}\,.
\end{equation} 
Finally, let us look at
\begin{equation*}
H_{m, j}:= \tr Y^{jm}=m \tr (Y_{m-1}\dots Y_0)^j\,.
\end{equation*}
We have $Y_s=Z_s-X_s^{-1}$, so in terms of $A,B$ we have
\begin{equation*}
Y_s=t_sB-\Id_n\ (s=0,\dots, m-2)\,,\qquad Y_{m-1}=A^{-1}(t_{m-1}B-\Id_n)\,. 
\end{equation*}
After rescaling,
\begin{equation*}
H_{m, j}=\tr(A^{-1} P(B))^j\,,\quad\text{where}\ \ P(B):=\prod_{i\in I} (B-t_i^{-1}\Id_n)\,.
\end{equation*}
We see that in the limit $t_i\to\infty$, each $H_{m,j}$ tends to $G_{j,m}=\tr (A^{-1}B^m)^j$. Thus, $\{H_{m,j}\}$  is a more general $m$-parametric family of integrable systems. For a given $m$, the simplest Hamiltonian is $H_{m,1}=\tr (A^{-1}P(B))$, which is nothing but a general linear combination of $G_{m,1}$ with smaller $m$. For example,
\begin{equation}\label{h21}
H_{2,1}=\tr(A^{-1}B^2)-(t_0^{-1}+t_1^{-1})\tr(A^{-1}B)+(t_0t_1)^{-1}\tr A^{-1} 
\end{equation}
is a linear combination of $G_{2,1}$, $G_{1,1}$ and $G_{0,1}$. Therefore, $H_{m,1}$ can be written explicitly using the expressions for $G_{l,1}$ with $l\le m$.


\section{Quantization and further links} \label{final} 

Integrable systems closely related to those constructed above appeared recently in a different context, so below we indicate these connections.

\subsection{Twisted Macdonald--Ruijsenaars systems}

In \cite[Appendix]{CE}, certain generalisations of the quantum Macdonald--Ruijsenaars system were proposed. Such generalisations depend on an integer $\ell\ge 2$ and they exist for any root system $R$. Their construction given in \cite{CE} is very implicit: at the first step, the eigenfunctions $\psi_\ell(\lambda, z)$ for the twisted system are constructed by integrating certain products of the Gaussian and the eigenfunctions $\psi(\lambda, z)$ of the usual Macdonald--Ruijsenaars system. By analysing the properties of $\psi_\ell(\lambda, z)$, the existence of a complete family of commuting quantum Hamiltonians is then deduced (see \cite[Appendix]{CE} for more details). To compare these integrable systems with the ones constructed above, let us write the corresponding quantum Hamiltonian explicitly in the case $R=A_{n-1}$ and $\ell=2$. 

We consider the algebra of difference operators in $n$ variables $z_1, \dots, z_n$ and denote by $T_i$ the shift operator in the $i$th variable, acting by $T_if(z_1,\dots, z_n)=f(z_1, \dots, z_i+1, \dots, z_n)$. 
It will be convenient to introduce $q\in\CC^{\times}$, not a root of unity, and work with exponential coordinates $x_i=q^{z_i}$, also allowing $x_i^{1/2}=q^{z_i/2}$. We have $T_i(x_i^{1/2})=q^{1/2}x_i^{1/2}$. 
In the formulas below we will aslo use $t\ne 0$ as a coupling parameter of the system.

\begin{prop}\label{tmr} For $R=A_{n-1}$ and $\ell=2$, the Hamiltonian of the twisted Macdonald--Ruijsenaars system \cite{CE} is given by the following difference operator $D_{2,1}$:
\begin{equation}
D_{2,1}=\sum_{i=1}^n a_i T_i^2+\sum_{i<j}^n b_{ij} T_iT_j\,,
\end{equation}
where the coefficients $a_i$, $b_{ij}$ are given by
\begin{align}
a_i=&\prod_{j\ne i}^n \frac{(1-tx_ix_j^{-1})(1-qtx_ix_j^{-1})}{(1-x_ix_j^{-1})(1-qx_ix_j^{-1})}\,,\\
b_{ij}=&q^{1/2}(t-1)(t-q)\frac{(x_i^{1/2}x_j^{-1/2}+x_i^{-1/2}x_j^{1/2})}{(1-qx_ix_j^{-1})(1-qx_jx_i^{-1})}
\prod_{l\ne i,j}^n \frac{(1-tx_ix_l^{-1})(1-tx_jx_l^{-1})}{(1-x_ix_l^{-1})(1-x_jx_l^{-1})}\,.
\end{align}
\end{prop}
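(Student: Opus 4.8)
The plan is to identify $D_{2,1}$ with the twisted Hamiltonian of \cite{CE} by showing that the twisted eigenfunctions $\psi_2(\lambda,z)$ constructed there are eigenfunctions of $D_{2,1}$. Recall that in \cite{CE} these eigenfunctions arise from the ordinary Macdonald--Ruijsenaars eigenfunctions $\psi(\lambda,w)$ through an integral (equivalently, a $q$-summation) transform against a Gaussian kernel $K(w,z)$. First I would make this transform explicit for $R=A_{n-1}$ and $\ell=2$, fixing the normalisation of $K$, and record the first-order Macdonald difference equation $M_w\,\psi(\lambda,w)=E(\lambda)\,\psi(\lambda,w)$ together with its bispectral counterpart, since it is the latter that the kernel converts into a $z$-difference equation.

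The computation then proceeds by applying a second-order operator $\sum_i a_i T_i^2+\sum_{i<j} b_{ij}\,T_iT_j$ in the variable $z$ to the integral representation of $\psi_2$ and transferring the shifts onto $K(w,z)$. The decisive point is the intertwining property of the Gaussian: because $K$ carries a factor behaving like $q^{w\cdot z}$, a double shift $T_i^2$ in $z$ becomes, after summation by parts, a shift in $w$ whose coefficient must be matched against a term of $M_w$ evaluated at a $q$-shifted argument. This is what forces the leading coefficients to equal $a_i=\prod_{j\ne i}\frac{(1-tx_ix_j^{-1})(1-qtx_ix_j^{-1})}{(1-x_ix_j^{-1})(1-qx_ix_j^{-1})}$, namely the product of the Macdonald coefficient $\prod_{j\ne i}\frac{1-tx_ix_j^{-1}}{1-x_ix_j^{-1}}$ with its $q^{e_i}$-shift; the half-integer powers $x_i^{1/2}$ and the prefactor $q^{1/2}$ in $b_{ij}$ originate from the half-period shift of the Gaussian that accompanies the cross terms $T_iT_j$. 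Once the $a_i$ are fixed, the $b_{ij}$ are determined by the requirement that $D_{2,1}$ produce no spurious lower-order contribution, equivalently that the apparent poles at $x_i=x_j$ (and at $x_i=q^{\pm1}x_j$) cancel; this residue condition is linear in $b_{ij}$ and has the stated symmetric numerator $x_i^{1/2}x_j^{-1/2}+x_i^{-1/2}x_j^{1/2}$ as its unique regular solution.

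The main obstacle is this cross-term bookkeeping: tracking how the Gaussian intertwines $T_iT_j$ with the two orderings $(i,j)$ and $(j,i)$ of the Macdonald shifts, and checking that the residues of the resulting expression at $x_i=x_j$ cancel precisely when $b_{ij}$ is given by the stated formula, which simultaneously establishes uniqueness of the operator within this class. As an independent consistency check I would verify that in the classical limit $q\to1$ the coefficients $a_i,b_{ij}$ degenerate, after the rescaling relating the quantum and classical coupling normalisations, to the terms appearing in the classical Hamiltonian $G_{2,1}$ of \eqref{k21}; this both confirms the normalisation and ties the statement back to the reduction picture developed earlier in the paper.
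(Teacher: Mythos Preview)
Your approach is genuinely different from the paper's. You propose to work directly with the integral (Gaussian-kernel) representation of $\psi_2$ from \cite{CE} and to \emph{derive} the coefficients $a_i,b_{ij}$ by intertwining $z$-shifts through the kernel to the $w$-side, where the ordinary Macdonald operator acts. The paper instead bypasses the integral representation entirely: it invokes the axiomatic characterisation of $\psi_\ell$ by the quasi-periodicity conditions \eqref{axpsil} (rewritten for $A_{n-1}$, $\ell=2$ as \eqref{axpsila}), and reduces the eigenvalue equation \eqref{ei} to showing that $D_{2,1}$---after conjugation by the Gaussian $g(z)=q^{\langle z,z\rangle/4}$, yielding $\widetilde D_{2,1}$ with coefficients \eqref{d21a}--\eqref{d21b}---preserves the function space $\mathcal Q_m$ of holomorphic functions satisfying $(T_a)^jf=(T_b)^jf$ on $x_a=x_b$ for $j=1,\dots,m$. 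This is the method of \cite{C}, and it reduces everything to a local residue check at the diagonals, handled by \cite[Lemma~2.5]{C}. No kernel identity or summation by parts is needed.

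Your kernel route is plausible in outline, but one step is stated imprecisely and would need repair. You write that ``a double shift $T_i^2$ in $z$ becomes, after summation by parts, a shift in $w$''; but for a kernel behaving like $q^{w\cdot z}$, a $z$-shift converts to \emph{multiplication} by $q^{w_i}$ on the $w$-side, not to a shift. To land on the first-order operator $M_w$ you must additionally use the quadratic Gaussian factor in the $w$-variable carried by the \cite{CE} transform, and the resulting identity is more delicate than your sketch indicates (indeed, this is precisely why the paper conjugates by $g$ rather than working with the bilinear kernel directly). Your residue argument for fixing $b_{ij}$ via pole cancellation at $x_i=x_j$ and $x_i=q^{\pm1}x_j$ is correct in spirit and is essentially what the paper's Lemma encodes, but the paper reaches it without having to justify convergence or boundary terms in a $q$-integral. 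The classical-limit comparison with $G_{2,1}$ that you propose as a consistency check is carried out in the paper immediately after the proof, and does give the expected match with \eqref{k21}.
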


\begin{proof}
According to \cite[Theorem 7.1(1)]{CE}, for any (reduced) root system $R\subset V$ of a Weyl group $W$, $W$-invariant multiplicities $m_\alpha\in\Z_{\ge 0}$, and any $\ell\in\N$, there exists a twisted BA function $\psil(\lambda, z)$ of the form  
\begin{equation}\label{psil}
\psil(\lambda,z) = q^{\langle\lambda, z\rangle/\ell}\sum_{\nu\in\nn\cap \ell^{-1}P} \psi_\nu(\lambda) q^{\langle\nu,z\rangle}\,,\quad \lambda, z\in V\,,
\end{equation}
where $\nn$ denotes the convex hull of the $W$-orbit of a vector $\rho=\sum_{\alpha\in R_+}m_\alpha\alpha$, and $P$ is the weight lattice of $R$. The function $\psil$ is characterised by the following properties: for
each $\alpha\in R$, $j=1,\dots ,m_\alpha$ and any $\epsilon$ with $\epsilon^\ell=1$ one has
\begin{equation}\label{axpsil}
\psil\left(\lambda,\, z-{\frac 12 j\alpha}\right) = \epsilon^{j}\psil\left(\lambda,
\,z{+\frac 12 j\alpha}\right)\quad\text{for}\ \
q^{\langle\alpha, z\rangle/\ell}=\epsilon\,.
\end{equation}
Properties \eqref{axpsil} determine $\psil$ uniquely, up to an arbitrary $\lambda$-dependent factor. Moreover, by \cite[Theorem 7.1(3)]{CE}, the function $\psil$ is a common eigenfunction of a family of commuting $W$-invariant difference operators $D_{\ell}^{\pi}$ in the $x$-variable, so that $D_{\ell}^\pi\psil=\mathfrak m_\pi(\lambda)\psil$. Here $\pi$ is any dominant weight and $\mathfrak m_\pi(\lambda)=\sum_{\tau\in W\pi}q^{\langle\tau, \lambda\rangle}$ is the corresponding orbitsum.   

Let us consider the case $R=A_{n-1}$, $m_\alpha=m$ and $\ell=2$. In this case properties \eqref{axpsil} can be rewritten as
\begin{equation}\label{axpsila}
\psil(\lambda,\, z+je_a) = \epsilon^{j}\psil(\lambda,
\,z+ je_b)\quad\text{for}\ \
q^{z_a/2}=\epsilon q^{z_b/2}\,,
\end{equation}
which should hold for all $j=1,\dots, m$, distinct $a,b\in\{1,\dots n\}$ and $\epsilon=\pm 1$.

To identify the operator $D_{2,1}$ with one of the twisted Macdonald--Ruijsenaars Hamiltonian from \cite{CE}, it suffices to show that 
\begin{equation}\label{ei}
D_{2,1}\psil(\lambda, z)=\left(\sum_{i=1}^n q^{\lambda_i}\right)\psil(\lambda, z)\,.
\end{equation}
Using the approach of \cite[Section 3]{C}, this would follow once we establish that the operator $D_{2,1}$ preserves the properties \eqref{axpsila} of $\psil$. Note that if we multiply $\psil$ by the function $g(z)=q^{\langle z, z\rangle/4}$, where $\langle z, z \rangle=z_1^2+\dots+z_n^2$, then $\widetilde\psil(\lambda, z)=g(z)\psil(\lambda, z)$ will satisfy the following conditions:
\begin{equation}\label{axpsilb}
\widetilde\psil(\lambda,\, z+je_a) = \widetilde\psil(\lambda,
\,z+ je_b)\quad\text{for}\ \
q^{z_a/2}=\epsilon q^{z_b/2}\,,
\end{equation}
which means that 
\begin{equation}\label{axpsilc}
\widetilde\psil(\lambda,\, z+je_a) = \widetilde\psil(\lambda,
\,z+ je_b)\quad\text{for}\ \
q^{z_a}=q^{z_b}\,.
\end{equation}
Therefore, it remains to check that the operator $\widetilde D_{2,1}:=g\circ D_{2,1}\circ g^{-1}$ in the case $t=q^{-m}$ preserves the properties \eqref{axpsilc} for all $j=1,\dots, m$.
Explicitly, we have
\begin{equation}\label{d21}
\widetilde D_{2,1}=\sum_{i=1}^n\widetilde a_i T_i^2+\sum_{i<j}^n \widetilde b_{ij} T_iT_j\,,
\end{equation}
where the coefficients $\widetilde a_i$, $\widetilde b_{ij}$ are given by
\begin{align}\label{d21a}
\widetilde a_i=&(qx_i)^{-1}\prod_{j\ne i}^n \frac{(1-tx_ix_j^{-1})(1-qtx_ix_j^{-1})}{(1-x_ix_j^{-1})(1-qx_ix_j^{-1})}\,,\\
\label{d21b}
\widetilde b_{ij}=&\frac{(t-1)(t-q)(x_i^{-1}+x_j^{-1})}{(1-qx_ix_j^{-1})(1-qx_jx_i^{-1})}
\prod_{l\ne i,j}^n \frac{(1-tx_ix_l^{-1})(1-tx_jx_l^{-1})}{(1-x_ix_l^{-1})(1-x_jx_l^{-1})}\,.
\end{align} 
For this operator we have the following result.

\begin{lem}
Let $\mathcal Q_m$ denote the space of functions $f(x_1, \dots, x_n)$ holomorphic on $(\CC^\times)^n$ and such that for any $j=1, \dots, m$ and $1\le a<b\le n$ we have $(T_a)^j f=(T_b)^j f$ for $x_a=x_b$. Then $ \widetilde D_{2,1}(\mathcal Q_m)\subseteq \mathcal Q_m$. 
\end{lem}     
This lemma is proved analogoulsy to \cite[Proposition 2.1]{C},  using \cite[Lemma 2.5]{C}. It implies that the operator $\widetilde D_{2,1}$ preserves the properties \eqref{axpsilc}, so we are done.
\end{proof}

To see a link with the Hamiltonian $G_{2,1}$ \eqref{k21}, consider the classical limit of $\widetilde D_{2,1}$. On the quantum level we have the algebra of $q$-difference operators, with $[T_i,x_j]=\delta_{ij} (q-1)x_iT_i$. In the classical limit $q=e^{-\hbar}\to 1$ we obtain $2n$ commuting variables $\bar x_i^{\pm 1}, \bar T_i^{\pm 1}$ with the Poisson bracket $\{\bar x_i, \bar T_j\}=\delta_{ij}\bar x_i\bar T_i$. The classical limit of $\widetilde D_{2,1}$ is, therefore, the following function $\bar D_{2,1}$: 
\begin{align}
\bar D_{2,1}=&\sum_{i=1}^n\bar a_i \bar T_i^2+\sum_{i<j}^n \bar b_{ij} \bar T_i\bar T_j\,,\quad
\bar a_i=\bar x_i^{-1}\prod_{j\ne i}^n \frac{(1-t\bar x_i\bar x_j^{-1})^2}{(1-\bar x_i\bar x_j^{-1})^2}\,,\\
\bar b_{ij}=&\frac{(t-1)^2(\bar x_i^{-1}+\bar x_j^{-1})}{(1-\bar x_i\bar x_j^{-1})(1-\bar x_j\bar x_i^{-1})}
\prod_{l\ne i,j}^n \frac{(1-t\bar x_i\bar x_l^{-1})(1-t\bar x_j\bar x_l^{-1})}{(1-\bar x_i\bar x_l^{-1})(1-\bar x_j\bar x_l^{-1})}\,.
\end{align}
A substitution $\bar x_i=x_i$, $\bar T_i=\sigma_i$ matches it to the formula \eqref{k21}. 

\begin{rem} 
More generally, it follows from the results of \cite{BEF} that for any $\ell$, the twisted Macdonald--Ruijsenaars system from \cite[Appendix]{CE} in type $A$ coincides, in the classical limit, with the system defined by $G_{\ell, j}=\tr (A^{-1}B^\ell)^j$, $j\in\N$. See \cite[Remark 3.26]{BEF}.     
\end{rem}

\begin{rem}
According to \cite[Theorem 7.3]{CE}, for $t=q^{-m}$ with $m\in\Z_+$ the operator $D_{2,1}$ is algebraically integrable. Moreover, in that case its eigenfunctions $\psil(\lambda, z)$ are given by \cite[(7.2)]{CE}, with $\ell=2$. The operator $D_{2,1}$ is bispectrally self-dual, namely,  one has $\psil(\lambda, z)=\psil(z, \lambda)$.
\end{rem}

\subsection{Cyclotomic DAHA and quiver quage theory}

In the process of writing this paper we became aware of the work of Braverman, Finkelberg and Nakajima \cite{BFNa, BFNb}, and of Kodera and Nakajima \cite{KN}, where some operators generalising the Macdonald--Ruijsenaars operators appeared in the context of quiver gauge theory. In particular, in \cite{KN} an isomorphism is established between the quantized Coulomb branch of a $3d$ $\mathcal N=4$ supersymetric gauge theory and the spherical subalgebra of the Cherednik algebra for $W=\Z_l\wr S_N$. The form of this isomorphism (see \cite[Theorem 1.5]{KN}) suggested to us that there should be a relation to the integrable systems constructed in this paper. Indeed, on the classical level this can be seen as follows. Recall that for each $m$, the Calogero--Moser space $\mathcal C_{n, q}(m)$ carries three families of Poisson-commuting functions $E_{m,j}=\tr X^{mj}$, $F_{m,j}=\tr (1+XY)^{j}$, and $H_{m,j}=\tr Y^{jm}$. We have an open dense subspace of  $\mathcal C_{n, q}(m)$ where $X$ is invertible and everything can be written in terms of the matrices $A,B$ that parametrize the Calogero--Moser space $\mathcal C_{n, t}^0$ for the tadpole quiver, resulting in 
\begin{equation}\label{efh}
E_{m,j}=\tr A^j\,,\quad F_{m,j}= \tr B^j\,,\quad H_{m,j}=\tr (A^{-1}\prod_{i\in I} (B-t_i^{-1}\Id_n))^j\,.  
\end{equation}
Choosing the coordinates $x, \sigma$ as in \eqref{xysCyc}, we obtain the expressions for these Hamiltonians as in section \ref{ss:CycRS}. That coordinate system was chosen so to make $A$ diagonal. Instead, we can choose to diagonalise $B$, parametrizing $A,B$ by $w=(w_1, \dots, w_n)$ and $u=(u_1, \dots, u_n)$ as follows:
\begin{equation}\label{RSLcyc}
B=\diag(w_1, \dots, w_n)\,,\quad  A_{ij}=\, u_j\,\frac{t^{-1}-1}{t^{-1}-w_iw_j^{-1}}\,\prod_{k\neq j} \,\frac{1-t^{-1}w_jw_k^{-1}}{1-w_jw_k^{-1}}\,.
\end{equation}  
This corresponds to the canonical transformation $\Phi: (A,B)\mapsto (B,A)$ (which changes the sign of the bracket), so we have:
\begin{equation*}
\{w_i,w_j\}=0 \, , \quad
\{u_i, w_j\}=\delta_{ij} u_i w_j\, , \quad
 \{u_i,u_j\}=0\,. 
 \end{equation*}   
To write the Hamiltonians \eqref{efh} in these new coordinates, one needs to calculate $A^{-1}$. This is done with the help of the Cauchy's determinant formula; the result is
\begin{equation*}
(A^{-1})_{ij}=u_i^{-1} \frac{t-1}{t-w_iw_j^{-1}}\,\prod_{k\neq j} \,\frac{1-tw_jw_k^{-1}}{1-w_jw_k^{-1}}\,.
\end{equation*}
As a result, we have
\begin{equation}\label{hd}
E_{m,1}=\sum_{i=1}^n \prod_{j\neq i} \,\frac{1-t^{-1}w_iw_j^{-1}}{1-w_iw_j^{-1}} u_i\,,\quad F_{m,j}=\sum_{i=1}^n w_i^j\,,\quad 
H_{m,1}=\sum_{i=1}^n \prod_{j\neq i} \,\frac{1-tw_iw_j^{-1}}{1-w_iw_j^{-1}} \prod_{k=0}^{m-1} (w_i-t_k^{-1}) u_i^{-1}\,. 
\end{equation}
Note that in this form the integrability of the Hamiltonians $E_{m,1}$ and $H_{m,1}$ becomes obvious: $E_{m,1}$ {\it is} the Ruijsennars--Schneider Hamiltonian, and $H_{m,1}$ reduces to such by a canonical transformation $w_i\mapsto w_i$, $u_i\mapsto \prod_{k=0}^{m-1} (w_i-t_k^{-1}) u_i$ ($i=1,\dots, n$). By contrast, in coordinates $x_i, \sigma_i$ as in Section \ref{ss:CycRS}, the Hamiltonian $H_{m,1}$ looks complicated and its integrability is not obvious without knowing that change of variables that reduces it to the Ruijsenaars--Schneider system. We should emphasize that the Hamiltonian flows defined by $H_{m,j}$ are {\it not} complete when viewed on $\mathcal C_{n,q}^0(m)\simeq \mathcal C_{n,t}^0$, so one does need a cyclic-quiver interpretation in order to get a completed phase space and integrate the flows.

Now, there is an obvious parallel between \eqref{hd} and the generators $E_1[1]$, $F_1[1]$ and $\sum_{i} w_i^j$ of the quantized Coulomb branch as in \cite[Theorem 1.5]{KN}. Therefore, one should expect the above Hamiltonians to be connnected with the $K$-theoretic Coulomb branch of a quiver gauge theory, cf. \cite[A(ii) and Remark A.6]{BFNb}. Then the quantized $K$-theoretic Coulomb branch \cite{BFNa, BFNb} should also provide quantization of these integrable Hamiltonians. 

In fact, a recent paper of Braverman, Etingof and Finkelberg (with an appendix by Nakajima and Yamakawa) \cite{BEF} greatly clarifies this connection. In particular, it introduces a cyclotomic version of the double affine Hecke algebra for $\Gl_n$, giving a construction of both the quantum and the classical versions of the integrable systems considered in the present paper. Below we indicate the relationship between our results and those from \cite{BEF}. 

The Calogero--Moser space $\mathcal C_{n, q}(m)$ is the same as the space $\mathcal M_N^l(Z,t)$ from \cite[Section 5]{BEF} with $l=m$ and $N=n$. Our parameter $t$ is the same as in \cite{BEF}, while their parameters $Z_i$ are related to our $q_i$ by $Z_{i-1}/Z_{i}=q_i$. By \cite[Theorems 5.16 \& 5.17]{BEF},  the coordinate ring $\mathcal O(\mathcal M_N^l(Z,t))$ is identified with the spherical subalgebra  
of the cyclotomic DAHA $H\!\!H^l_N(Z,1,t)$. This ring admits a noncommutative deformation, a spherical subalgebra $e_N H\!\!H^l_N(Z,q,t) e_N$, where $q$ is a quantization parameter. The cyclotomic DAHA $H\!\!H^l_N(Z,q,t)$, in its turn, is defined as a subalgebra of the usual DAHA for $\Gl_N$, see \cite[Sec. 3.4]{BEF}. By \cite[Theorem 3.28]{BEF}, the algebra $H\!\!H^l_N(Z,q,t)$ contains three commutative subalgebras generated by $X_i$, $Y_i^{\pm 1}$ and $D_i^{(l)}$ with $i=1,\dots ,N$. By symmetrization, one obtains three commutative subalgebras in the spherical subalgebra of $H\!\!H^l_N(Z,q,t)$, see \cite[Section 3.6]{BEF}. The elements of the spherical subalgebra can be realized as difference operators; in this way one obtains three commuting subalgebras of difference operators that quantize the Poisson-commuting families $\{E_{l,j}\,|\,j\in\N\}$, $\{F_{l,j}\,|\,j\in\Z\}$ and $\{H_{l,j}\,|\, j\in\N\}$ from Section \ref{ss:CycRS} (the first family consists of functions of $x_i$ so its quantization is obvious). 

Let us note that writing down  these quantum integrable Hamiltonians expliciltly does not seem easy. In the case of a quiver with two vertices,  we have the following explicit formula for the quantization of $H_{2,1}$ \eqref{h21}.

\begin{prop} The quantum Hamiltonian 
\begin{equation}
\widetilde{H_{2,1}}=\sum_{i=1}^n\widetilde a_i T_i^2+\sum_{i<j}^n \widetilde b_{ij} T_iT_j+ \alpha\sum_{i=1}^n \prod_{k\ne i}^n\frac{1-tx_ix_k^{-1}}{1-x_ix_k^{-1}}x_i^{-1}T_i+\beta\sum_{i=1}^n x_i^{-1}
\end{equation}
is completely integrable for any values of the parameters $\alpha, \beta$. Here the coefficients $\widetilde a_i$, $\widetilde b_{ij}$ are given by \eqref{d21a}--\eqref{d21b}. 
\end{prop}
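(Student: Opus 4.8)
The plan is to establish complete integrability of $\widetilde{H_{2,1}}$ by exhibiting it inside a commutative family of difference operators, using the same strategy that underpins Proposition \ref{tmr} and the connection to the twisted Macdonald--Ruijsenaars systems. The key observation is that $\widetilde{H_{2,1}}$ is built from pieces whose classical limits are precisely the Hamiltonians $G_{2,1}$, $G_{1,1}$, and $G_{0,1}$ assembled as in \eqref{h21}: the leading term $\sum_i \widetilde a_i T_i^2+\sum_{i<j}\widetilde b_{ij}T_iT_j$ is the operator $\widetilde D_{2,1}$ from \eqref{d21}, quantizing $G_{2,1}$, while the two lower-order sums quantize $G_{1,1}$ and $G_{0,1}$ respectively. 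Since Theorem \ref{Thm:cycInvol} guarantees that the classical family $\{H_{m,j}\}$ (and hence the $G_{l,j}$) Poisson-commute, one expects the quantizations to commute as well once they are correctly normalized.

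First I would invoke the conjugation trick already used in the proof of Proposition \ref{tmr}: set $\widetilde\psi_\ell=g\psi_\ell$ with $g(z)=q^{\langle z,z\rangle/4}$ and work with the Gaussian-conjugated operator, so that the characterisation via the vanishing conditions \eqref{axpsilc} applies. The natural route is to show that $\widetilde{H_{2,1}}$ preserves the space $\mathcal Q_m$ of the Lemma (with $m=2$, so $t=q^{-2}$), for then it is determined, up to the ambiguity in its eigenvalue, by its restriction to functions satisfying the interpolation conditions, and one reads off that it admits a common system of eigenfunctions $\psi_\ell(\lambda,z)$. Concretely, I would verify that each of the four summands maps $\mathcal Q_2$ into $\mathcal Q_2$: the quadratic part is handled by the Lemma verbatim, and the new first-order piece $\sum_i\prod_{k\ne i}\frac{1-tx_ix_k^{-1}}{1-x_ix_k^{-1}}x_i^{-1}T_i$ is exactly (a constant multiple of) the standard Macdonald--Ruijsenaars operator, whose stability on $\mathcal Q_m$ follows from the same residue/cancellation argument as in \cite[Lemma 2.5]{C}. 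The scalar term $\beta\sum_i x_i^{-1}$ is manifestly multiplication by a symmetric function and trivially preserves $\mathcal Q_2$.

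Once preservation of $\mathcal Q_2$ is established, integrability follows by the argument of \cite[Section 3]{C}: the BA function $\psi_\ell(\lambda,z)$ is a common eigenfunction of the whole commuting family $D_\ell^\pi$ produced in \cite{CE}, and any difference operator preserving the interpolation conditions \eqref{axpsilc} acts on $\psi_\ell$ as a scalar, hence commutes with every $D_\ell^\pi$ and lies in the same maximal commutative algebra. In particular $\widetilde{H_{2,1}}$ joins this family for all $\alpha,\beta$, because the defining conditions \eqref{axpsilc} do not see the values of $\alpha,\beta$; the lower-order terms only shift the eigenvalue. Undoing the Gaussian conjugation returns $H_{2,1}$ itself, and since the coordinate ring $\mathcal O(\mathcal C_{n,q}(2))$ is identified via \cite{BEF} with the spherical cyclotomic DAHA, the dimension count $\dim\mathcal C_{n,q}(2)=2n$ confirms that $n$ independent commuting Hamiltonians suffice for complete integrability.

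The main obstacle I anticipate is the bookkeeping in the stability proof for the mixed operator: one must check that the interpolation condition $(T_a)^jf=(T_b)^jf$ at $x_a=x_b$ (for $j=1,2$) survives application of the \emph{sum} of a second-order and a first-order part with different coefficient functions, since the residues that must cancel at the wall $x_a=x_b$ now come from several summands with distinct shift structure. I expect these cancellations to work precisely because the coefficients $\widetilde a_i,\widetilde b_{ij}$ at $t=q^{-m}$ were engineered (in the proof of Proposition \ref{tmr}) to produce the requisite zeros, and the first-order Macdonald--Ruijsenaars coefficient shares the same pole-zero pattern; but verifying the compatibility term-by-term, rather than appealing to a black-box lemma, is where the real work lies. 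An alternative, cleaner route would be to bypass the explicit residue analysis entirely by deducing integrability directly from \cite[Theorem 3.28]{BEF}, identifying $\widetilde{H_{2,1}}$ with the image of a symmetrized element of the commutative subalgebra generated by the $D_i^{(l)}$ in the cyclotomic DAHA; this would make the commutativity structural rather than computational, at the cost of a less self-contained argument.
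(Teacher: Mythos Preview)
The paper's ``proof'' is a one-line citation: it simply invokes Proposition~\ref{tmr}, the formula~\eqref{d21} for $\widetilde D_{2,1}$, and \cite[Corollary~3.22, Example~3.24, Remarks~3.25~\&~3.26]{BEF}. In other words, the paper takes exactly what you describe at the end as the ``alternative, cleaner route'': it identifies $\widetilde{H_{2,1}}$ with the image of an element of a commutative subalgebra of the cyclotomic DAHA from \cite{BEF}, so integrability is inherited structurally for generic parameters and no residue analysis is needed.

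Your main route via preservation of $\mathcal Q_m$ and the BA-function argument is reasonable in spirit but has two defects. First, when you write ``with $m=2$, so $t=q^{-2}$'' you are conflating two unrelated integers: the twisting parameter $\ell=2$ (the number of vertices of the cyclic quiver, which is indeed $2$ here) and the multiplicity parameter $m$ appearing in $t=q^{-m}$ inside the Lemma in the proof of Proposition~\ref{tmr}. That Lemma holds for arbitrary $m$, and nothing forces $m=2$. Second, even after fixing this, the BA-function method only establishes integrability for the discrete set $t=q^{-m}$, $m\in\Z_{\ge 0}$, whereas the Proposition is stated for arbitrary $t$. You would need an additional analytic continuation in $t$, which is plausible but neither automatic nor mentioned. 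The \cite{BEF} route bypasses this entirely, since the cyclotomic DAHA and its three commutative subalgebras are constructed for generic parameters from the start; that is why the paper opts for it.
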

 
This formula is obtained by combining Proposition \ref{tmr}, \eqref{d21} and \cite[Corollary 3.22, Example 3.24, Remarks 3.25 \&  3.26]{BEF}. \qed
    
There is also a ``dual'' realization of these quantum Hamiltonians, obtained by applying the Cherednik--Fourier transform, see \cite[Section 3.6]{BEF}. These dual families are quantum versions of \eqref{hd}. Essentially, they are given by the same formulas as in \eqref{hd}, but with $u_i$ replaced by the multiplicative shift $T_i$.

\appendix

\section{Calculations with the brackets} \label{Ann:brackets} 

In this section we prove Propositions \ref{Prop:int}, \ref{Prop:dBrTad}, \ref{Prop:intcyclic}, \ref{Prop:dBrcyclic}. We will start with the case of the cyclic quiver. Given $m\ge 2$, we set $I:=\Z/m\Z$. For $r\in\Z$, we set 
\begin{equation}\label{er}
E_r:=\sum_{i\in I} e_{i+r}\otimes e_i\in A\otimes A\,.
\end{equation} 
Recall that $x:=\sum_{i\in I}x_i$, $y:=\sum_{i\in I} y_i$, with $e_ix=xe_{i+1}$,  $e_{i+1}y=ye_i$. The element $e=\sum_i e_i$ commutes with $x,y$, so we will identify it with the identity. If one introduces a $\Z$-grading on $k\langle x, y\rangle$ by setting $\deg\,x=1$, $\deg\,y=-1$, then
\begin{equation}\label{gr}
(uE_rv)^\circ=vE_{r'}u\,,\quad r'=-r+\deg u+\deg v\,,
\end{equation}
for any homogeneous elements $u,v\in k\langle x, y\rangle$.
  
\begin{lem}
We have:
\begin{equation}\label{xyxy}
\dgal{xy, xy}=xyE_0-E_0xy-\frac12 E_0(xy)^2+\frac12 (xy)^2E_0\,.
\end{equation}
\end{lem}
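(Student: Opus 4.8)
The plan is to reduce $\dgal{xy,xy}$ to the four elementary brackets of Lemma~\ref{cxy} by the derivation rules for a double bracket, and then to collect terms. First I would compute the two intermediate brackets by expanding the second argument via $\dgal{a,bc}=b\dgal{a,c}+\dgal{a,b}c$:
\begin{equation*}
\dgal{x,xy}=x\dgal{x,y}+\dgal{x,x}\,y\,,\qquad \dgal{y,xy}=x\dgal{y,y}+\dgal{y,x}\,y\,,
\end{equation*}
so that each is obtained from Lemma~\ref{cxy} by outer left and right multiplication by $x$ and $y$ only. I would then assemble the bracket using the companion Leibniz rule in the first argument, $\dgal{ab,c}=\dgal{a,c}'\,b\otimes\dgal{a,c}''+\dgal{b,c}'\otimes a\,\dgal{b,c}''$ (a formal consequence of the second-argument derivation property together with $\dgal{a,b}=-\dgal{b,a}^\circ$), which with $a=x$, $b=y$, $c=xy$ reads
\begin{equation*}
\dgal{xy,xy}=\dgal{x,xy}'\,y\otimes\dgal{x,xy}''+\dgal{y,xy}'\otimes x\,\dgal{y,xy}''\,.
\end{equation*}
Performing the reduction in this order is convenient because it uses only genuine multiplications and never the flip $(-)^\circ$; alternatively one could pass from $\dgal{x,xy}$, $\dgal{y,xy}$ to $\dgal{xy,x}$, $\dgal{xy,y}$ by antisymmetry, at the cost of invoking the grading rule \eqref{gr}.

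Next I would substitute the explicit expressions from Lemma~\ref{cxy}. The outcome is a sum of monomials $u\,E_r\,v$ with $u,v$ words in $x,y$; to handle them it is cleanest to write $E_r=\sum_{i\in I}e_{i+r}\otimes e_i$ and to reindex using the vertex-shift relations $e_ix=x_i$, $xe_i=x_{i-1}$, $ye_i=y_i$, $e_iy=y_{i-1}$. The $\Z$-grading with $\deg x=1$, $\deg y=-1$ is additively preserved by the bracket, so every surviving term has total degree $0$, in agreement with the four terms on the right of \eqref{xyxy}; the contributions coming from $\dgal{x,x}$ and $\dgal{y,y}$ (which carry the factors $x^2$, $y^2$) are precisely the ones that produce the quadratic terms $\pm\tfrac12(xy)^2E_0$.

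The main work, and the only genuine obstacle, is the final bookkeeping: after substitution there are on the order of a dozen monomials, and one must verify that they cancel in pairs except for $xy\,E_0$, $-E_0\,xy$, $-\tfrac12 E_0(xy)^2$ and $\tfrac12(xy)^2E_0$. Working component-wise in $i\in I$ makes the cancellation transparent, since after reindexing both sides become sums over $i$ of tensors built from the degree-zero elements $(xy)_i=e_i\,xy\,e_i$, and the pairings are dictated entirely by how $E_{\pm1}$ is shifted under multiplication by $x$ and $y$. A direct, if slightly tedious, check then yields \eqref{xyxy}.
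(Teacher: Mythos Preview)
Your proposal is correct and follows essentially the same strategy as the paper: reduce to $\dgal{x,xy}$ and $\dgal{y,xy}$, expand these via the second-argument Leibniz rule and Lemma~\ref{cxy}, and collect terms. The only organisational difference is that the paper takes precisely the ``alternative'' route you mention---it expands the second argument first to get $\dgal{xy,xy}=-\dgal{x,xy}^{\circ}y-x\dgal{y,xy}^{\circ}$ and then applies the flip explicitly (using the grading rule \eqref{gr}), whereas you hide the flip inside the first-argument Leibniz rule with inner multiplication. Both lead to the same handful of monomials $uE_rv$ and the same cancellation.
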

\begin{proof}
\begin{equation}\label{lb}
\dgal{xy, xy}= \dgal{xy, x}y+x\dgal{xy, y}=-\dgal{x,xy}^\circ y-x\dgal{y,xy}^\circ\,. 
\end{equation}
Using the formulas \eqref{cycida}--\eqref{cycidc}, we have
\begin{equation*}
\dgal{x,xy}=\dgal{x,x}y+x\dgal{x,y}=\frac12(E_1x^2-x^2E_1)y+x\left(E_{1}+\frac12(yxE_{1}+E_{1}xy-yE_{1}x+xE_{1}y)\right)\,,
\end{equation*}
and so 
\begin{equation*}
\dgal{x,xy}^\circ=\frac12(x^2yE_{0}-yE_{0}x^2)+E_0x+\frac12(E_0xyx+xyE_0x-xE_0xy+yE_0x^2)\,.
\end{equation*}
Similarly, we find that
\begin{eqnarray*}
\dgal{y,xy}&=&\frac12(xy^2E_{-1}-xE_{-1}y^2)-E_{-1}y-\frac12(E_{-1}yxy+xyE_{-1}y-xE_{-1}y^2+yE_{-1}xy)\,,\\
\dgal{y,xy}^\circ &=&\frac12(E_0xy^2-y^2E_0x)-yE_0-\frac12(yxyE_0+yE_0xy-y^2E_0x+xyE_0y)\,.
\end{eqnarray*}
Substituting these expressions into \eqref{lb} leads, after cancellations, to the expression \eqref{xyxy}.
\end{proof}

\begin{lem}
Let us adjoin to $A$ the elements $x_i^{-1}$, so that $x^{-1}=\sum_{i\in I} x_i^{-1}$. Then for $z=y+x^{-1}$ one has:
\begin{equation}\label{zz}
\dgal{z, z}=\frac12 (z^2E_{-1}-E_{-1}z^2)\,.
\end{equation}
\end{lem}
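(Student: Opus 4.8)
The plan is to reduce everything to the already-known brackets of Lemma \ref{cxy} via the bilinearity of $\dgal{-,-}$ and a single ``inversion formula'' for brackets against $x^{-1}$. First I would expand, using that $\dgal{-,-}$ is $\CC$-bilinear,
\begin{equation*}
\dgal{z,z}=\dgal{y,y}+\dgal{y,x^{-1}}+\dgal{x^{-1},y}+\dgal{x^{-1},x^{-1}}\,,
\end{equation*}
so that the first term is given directly by \eqref{cycida} and only the three terms involving $x^{-1}$ remain to be computed.

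The key tool is the inversion formula. Since $xx^{-1}=\sum_{i}x_ix_i^{-1}=\sum_i e_i$ lies in $B$, the $B$-linearity of the double bracket gives $\dgal{a,xx^{-1}}=0$ for all $a$; applying the Leibniz rule $\dgal{a,xx^{-1}}=x\dgal{a,x^{-1}}+\dgal{a,x}x^{-1}$ (outer bimodule structure) and left-multiplying the first leg by $x^{-1}$ yields
\begin{equation*}
\dgal{a,x^{-1}}=-x^{-1}\dgal{a,x}\,x^{-1}\,,
\end{equation*}
while the remaining cases follow from the cyclic antisymmetry $\dgal{x^{-1},a}=-\dgal{a,x^{-1}}^{\circ}$. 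I would then feed in $\dgal{y,x}$ from \eqref{cycidc} to produce $\dgal{y,x^{-1}}$ and $\dgal{x^{-1},y}$, and feed in $\dgal{x,x}$ from \eqref{cycida} (via $\dgal{x^{-1},x}=(x^{-1}\dgal{x,x}\,x^{-1})^{\circ}$ and one further application of the inversion formula) to produce $\dgal{x^{-1},x^{-1}}$. The conjugations simplify using the identities $x^{-1}e_{k}=x_{k}^{-1}$, $e_{k}x^{-1}=x_{k-1}^{-1}$ and $x^{-1}x_i=e_{i+1}$, which also turn the combinations $x^{-1}E_{\pm1}x^{-1}$ back into shifted $E_{\pm1}$-tensors.

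Finally I would add the four contributions. Since $z^2=y^2+yx^{-1}+x^{-1}y+x^{-2}$, the point is that the cross terms $\dgal{y,x^{-1}}+\dgal{x^{-1},y}$ assemble the mixed pieces $yx^{-1}$ and $x^{-1}y$, while $\dgal{x^{-1},x^{-1}}$ supplies the $x^{-2}$ piece, so that everything collapses to $\tfrac12(z^2E_{-1}-E_{-1}z^2)$; all the inhomogeneous terms that are not of the form ``$E_{\pm1}$ flanked by two $z$-factors'' (in particular the bare $E_{\pm1}$ coming from \eqref{cycidc} together with the various degree-shift terms) must cancel. I expect this cancellation, along with the correct tracking of how conjugation by $x^{-1}$ converts $E_1$ into $E_{-1}$, to be the main obstacle. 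As a guiding check, $z=x^{-1}+y$ is precisely the variable $\hat{x}$ produced by the change of variables in the proof of Theorem \ref{Thm:QStructloc}, under which $(x_i,z_i)$ generate a copy of the path-algebra bracket structure with only $\dgal{x_i,x_i^{*}}$ modified as in \eqref{aastloc}; since the loop-type bracket $\dgal{y,y}$ is unaffected by that modification, this predicts $\dgal{z,z}$ to have exactly the same shape as $\dgal{y,y}$, which is the asserted formula and confirms the outcome of the direct computation.
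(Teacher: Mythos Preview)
Your proposal is correct and follows essentially the same approach as the paper: the paper indicates the direct computation via the inversion formula obtained from $0=\dgal{a,xx^{-1}}=x\dgal{a,x^{-1}}+\dgal{a,x}x^{-1}$, and also offers as an alternative precisely the change-of-variables argument from Theorem~\ref{Thm:QStructloc} that you describe as a guiding check. So you have recovered both of the paper's arguments.
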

\begin{proof}
This formula can be checked directly, using that $0=\dgal{y, xx^{-1}}=\dgal{y,x}x^{-1}+x\dgal{y,x^{-1}}$ and $0=\dgal{x, xx^{-1}}=\dgal{x,x}x^{-1}+x\dgal{x,x^{-1}}$. Alternatively, we can use the idea from the proof of Theorem \ref{Thm:QStructloc}. Namely, one can rewrite the bivector $\PP$ in terms of the generators $x_i, z_i=y_i+x_i^{-1}$. The resulting double brackets are almost identical (with $y_i$ replaced by $z_i$, which is now thought of as $x_i^*$), the only difference appears in the brackets $\dgal{a, a^*}$, see \eqref{aastloc}. Therefore, we have
 \begin{subequations}
       \begin{align}
\dgal{x,x}\,=\,&\frac{1}{2}(E_1x^2- x^2E_1)\,,\quad
\dgal{z,z}\,=\,\frac{1}{2}(z^2E_{-1}- E_{-1}z^2)\,,\\
\dgal{x,z}\,=\,&\frac{1}{2} (zxE_1+E_1xz-zE_1x+xE_1z)\,.\label{xz}
\end{align}
  \end{subequations} 
This gives the needed formula for $\dgal{z,z}$.
\end{proof}

Now we need the following general lemma. Let $A$ be an associative $k$-algebra with a double bracket $\dgal{-,-}$ and the associated ordinary bracket $\{-,-\}=m\circ \dgal{-,-}$. 

\begin{lem} Given $a\in A$, suppose that $\mathcal E\subset A\otimes A$ is a subset such that for any $E\in \mathcal E$ and any $r,s\ge 0$ we have $(a^rEa^s)^\circ=a^sE'a^r$ for some $E'\in\mathcal E$ which depends on $r+s$ but not on $r,s$ individually. Moreover, assume that $a$ commutes with any element in $m(\mathcal E)\subset A$. If $\dgal{a,a}=\sum_{i} (a^iE_i-E_i a^i)$ with $E_i\in\mathcal E$, then $\{a^k, a^l\}=0$ for all $k,l$.
\end{lem}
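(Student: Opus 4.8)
The plan is to compute $\dgal{a^k,a^l}$ in closed form and then apply $m$, so that the hypotheses force a termwise cancellation. I would first record the two Leibniz rules for a double bracket: the one given in the excerpt (derivation in the last argument for the outer bimodule structure), and its companion in the first argument for the inner structure $b(T'\otimes T'')c=T'c\otimes bT''$, namely $\dgal{uv,w}=u\ast\dgal{v,w}+\dgal{u,w}\ast v$, which is immediate from antisymmetry $\dgal{uv,w}=-\dgal{w,uv}^\circ$ together with the outer Leibniz rule. Iterating both rules yields
\[
\dgal{a,a^l}=\sum_{p=0}^{l-1}a^p\,\dgal{a,a}\,a^{l-1-p},\qquad
\dgal{a^k,a^l}=\sum_{q=0}^{k-1}a^q\ast\dgal{a,a^l}\ast a^{k-1-q}.
\]

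Next I would substitute $\dgal{a,a}=\sum_i(a^iE_i-E_ia^i)$ and push everything through $m$. Since $m\bigl(a^q\ast(T'\otimes T'')\ast a^{k-1-q}\bigr)=T'a^{k-1-q}a^qT''=T'a^{k-1}T''$ is independent of $q$, the $q$-summation contributes only an overall factor $k$, and one is left with
\[
\{a^k,a^l\}=k\sum_i\sum_{p=0}^{l-1}\Bigl(a^{p+i}\,C_i\,a^{l-1-p}-a^p\,C_i\,a^{l-1-p+i}\Bigr),
\qquad C_i:=E_i'a^{k-1}E_i''.
\]
For $k=1$ this already vanishes, because then $C_i=m(E_i)$ is central by the second hypothesis; the real point is to handle general $k$.

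The crux is a sub-lemma: each sandwiched element $C_i=E_i'a^{n}E_i''$ (here $n=k-1$) is central for $a$. To prove it I would apply the transposition hypothesis not to $E_i$ but to $G:=E_i^\circ$, which lies in $\mathcal E$ (the $r=s=0$ instance of the hypothesis). Reading $(a^nG)^\circ=\widehat G\,a^n$ at the level of tensors gives $G''\otimes a^nG'=\widehat G{}'\otimes\widehat G{}''a^n$, whence, after multiplying, $G''a^nG'=m(\widehat G)\,a^n$. Since $G=E_i^\circ$ has $G'=E_i''$ and $G''=E_i'$, the left-hand side is exactly $E_i'a^nE_i''=C_i$, while $m(\widehat G)\in m(\mathcal E)$ commutes with $a$. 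Hence $C_i=m(\widehat G)\,a^{k-1}$ commutes with $a$.

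With $C_i$ central for $a$, the inner sums collapse: $a^{p+i}C_ia^{l-1-p}=C_ia^{l-1+i}=a^pC_ia^{l-1-p+i}$, so every summand cancels and $\{a^k,a^l\}=0$. The main obstacle is precisely the first argument: because $\{-,-\}=m\circ\dgal{-,-}$ is a derivation only in its second slot, $\{a^k,a^l\}$ does \emph{not} reduce to the easy identity $\{a,a^l\}=0$, and the genuine content lies in the centrality of the products $E'a^nE''$. I would emphasise that the transposition hypothesis must be invoked with $G=E^\circ$ rather than $E$ itself, since after multiplication it is the order $E'\cdots E''$ (not $E''\cdots E'$) that occurs; this is the only step where the two hypotheses are combined.
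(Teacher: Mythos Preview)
Your argument is correct, and it takes a genuinely different route from the paper's proof. The paper first reduces to the case $l=1$ using the Leibniz rule in the second argument, then writes $\dgal{a^k,a}=-\dgal{a,a^k}^\circ$ and applies the transposition hypothesis termwise to each $a^{r+i}E_ia^s$ and $a^rE_ia^{s+i}$. There the clause ``$E'$ depends on $r+s$ only'' is essential: it guarantees that the \emph{same} $E_i'\in\mathcal E$ appears in both transposed terms, so that after multiplication one gets $a^s m(E_i') a^{r+i}-a^{s+i} m(E_i') a^r$, which vanishes because $m(E_i')$ commutes with $a$.

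Your approach avoids that reduction and instead isolates a stronger fact: the element $C_i=E_i'a^{k-1}E_i''$ itself commutes with $a$. You obtain this by applying the transposition hypothesis only at $(r,s)=(0,0)$ (to place $E_i^\circ$ in $\mathcal E$) and at $(r,s)=(k-1,0)$, never invoking the ``depends only on $r+s$'' clause. Thus you actually prove the lemma under a weaker hypothesis than stated. What the paper's route buys is brevity (no separate sub-lemma, and the $l$-variable disappears immediately); what yours buys is a cleaner structural statement---centrality of $C_i$---and an explicit formula for $\{a^k,a^l\}$ before cancellation. Both are short; yours is arguably more transparent about where the two hypotheses interact.
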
 

\begin{proof}
Since $\{-,-\}$ satisfies Leibniz's rule in the second argument, it is enough to prove that $\{a^k, a\}=0$ for all $a$. Now, $\dgal{a^k, a}=-\dgal{a, a^k}^{\circ}$, while
\begin{equation*}
\dgal{a, a^k}=\sum_{\genfrac{}{}{0pt}{2}{r, s\ge 0}{r+s=k-1}} a^{r}\dgal{a,a}a^{s}=\sum_{\genfrac{}{}{0pt}{2}{r, s\ge 0}{r+s=k-1}} \sum_{i} a^{r}(a^i E_i- E_i a^i)a^{s}\,,
\end{equation*} 
for some $E_i\in\mathcal E$. Using that $(a^{r+i} E_i a^s-a^rE_ia^{s+i})^\circ =a^s E_i' a^{r+i}-a^{s+i}E_i'a^r$, for some $E_i'\in\mathcal E$, we obtain:
\begin{equation*}
m (\dgal{a^k, a})=-\sum_{\genfrac{}{}{0pt}{2}{r, s\ge 0}{r+s=k-1}} \sum_{i} m(a^{s} E_i' a^{r+i}- a^{s+i} E_i' a^{r})=\sum_{r,s}\sum_{i}\left(a^s m(E_i') a^{r+i}-a^{s+i}m(E_i')a^r\right)=0\,,
\end{equation*}
since $m(E_i')$ commutes with $a$. Therefore, $\{a^k,a\}=0$, as needed. 
\end{proof}

\begin{proof}[Proof of Proposition \ref{Prop:intcyclic}.] 
We now use this lemma with $\mathcal E=\oplus_{r\in I} \CC E_r$ with $E_r$ as in \eqref{er}, and with $a=x, y, xy$ or $z=y+x^{-1}$. We have $m(\mathcal E)=\CC e$, where $e:=\sum_{i}{e_i}$ commutes with any of the above $a$. The assumptions of the lemma are satisfied due to \eqref{gr}. The rest follows from \eqref{cycida}, \eqref{xyxy} and \eqref{zz}.  
\end{proof}


\begin{proof}[Proof of Proposition \ref{Prop:dBrcyclic}.] 
We start by calculating some double brackets. First,
\begin{eqnarray*}
\dgal{x,x^a}&=&\sum_{\genfrac{}{}{0pt}{2}{r, s\ge 0}{r+s=a-1}}x^r\dgal{x,x}x^s=\sum_{\genfrac{}{}{0pt}{2}{r, s\ge 0}{r+s=a-1}}\left(\frac12 x^{r}E_1x^{s+2}-\frac12 x^{r+2}E_1x^{s}\right)\,,\\
\dgal{x^a,x}&=&-\dgal{x,x^a}^\circ=\sum_{\genfrac{}{}{0pt}{2}{r, s\ge 0}{r+s=a-1}}\left(-\frac12 x^{s+2}E_{a}x^{r}+\frac12 x^{s}E_ax^{r+2}\right)\,,\\
\dgal{x^a,x^b}&=&\sum_{\genfrac{}{}{0pt}{2}{r, s\ge 0}{r+s=b-1}}x^r\dgal{x^a,x}x^s\\
&=&\sum_{\genfrac{}{}{0pt}{2}{r, s\ge 0}{r+s=b-1}}\sum_{\genfrac{}{}{0pt}{2}{r', s' \ge 0}{r'+s'=a-1}}\left(-\frac12 x^{r+s'+2}E_ax^{r'+s}+\frac12 x^{r+s'}E_ax^{r'+s+2}\right)\,,\\
&=&\sum_{\genfrac{}{}{0pt}{2}{r, s\ge 0}{r+s=b-1}}\sum_{\genfrac{}{}{0pt}{2}{r', s' \ge 0}{r'+s'=a-1}}\left(-\frac12 x^{r+s'+2}E_ax^{r'+s}+\frac12 x^{r'+s}E_ax^{r+s'+2}\right)\,.
\end{eqnarray*}
Next, we have 
\begin{eqnarray*}
\dgal{y, x^a}\,&=&\, \sum_{\genfrac{}{}{0pt}{2}{r, s\ge 0}{r+s=a-1}} x^{r}\dgal{y,x} x^{s}\\
&=&\sum_{\genfrac{}{}{0pt}{2}{r, s\ge 0}{r+s=a-1}}\left( -x^{r}E_{-1}x^{s}-\frac12 x^{r}E_{-1}yx^{s+1} -\frac12 x^{r+1}yE_{-1}x^{s} +\frac12 x^{r+1}E_{-1}yx^{s} -\frac12 x^{r}yE_{-1}x^{s+1} \right)\,.
\end{eqnarray*}
For $\dgal{x^a, y}=-\dgal{y, x^a}^\circ$ we, therefore, obtain: 
\begin{equation*}
\dgal{x^a, y}=\sum_{\genfrac{}{}{0pt}{2}{r, s\ge 0}{r+s=a-1}}\left( x^{s}E_{a}x^{r}+\frac12 yx^{s+1}E_ax^{r} +\frac12 x^{s}E_ax^{r+1}y -\frac12 yx^{s}E_ax^{r+1} +\frac12 x^{s+1}E_ax^{r}y\right)\,.
\end{equation*}  
From this,
\begin{equation*}
\begin{aligned}
\dgal{x^a, yx^b}&= \dgal{x^a,y}x^b+y\dgal{x^a, x^b}\\
&=\sum_{\genfrac{}{}{0pt}{2}{r, s\ge 0}{r+s=a-1}}\left( x^{s}E_ax^{r+b}+\frac12 yx^{s+1}E_ax^{r+b} +\frac12 x^{s}E_ax^{r+1}yx^b -\frac12 yx^{s}E_ax^{r+1+b} +\frac12 x^{s+1}E_ax^{r}yx^b\right)\\
&+\sum_{\genfrac{}{}{0pt}{2}{r, s\ge 0}{r+s=b-1}}\sum_{\genfrac{}{}{0pt}{2}{r', s'\ge 0}{r'+s'=a-1}} \left(-\frac12 yx^{r+s'+2}E_ax^{r'+s}+\frac12 yx^{r'+s}E_ax^{r+s'+2}\right)\,.
\end{aligned}
\end{equation*} 
As a result, for $\dgal{yx^b, x^a}=-\dgal{x^a, yx^b}^\circ$ we get:
\begin{equation*}
\begin{aligned}
\dgal{yx^b, x^a}&= \sum_{\genfrac{}{}{0pt}{2}{r, s\ge 0}{r+s=a-1}}( -x^{r+b}E_{b-1}x^{s}-\frac12 x^{r+b}E_{b-1}yx^{s+1} -\frac12 x^{r+1}yx^bE_{b-1}x^{s} +\frac12 x^{r+1+b}E_{b-1}yx^{s} -\frac12 x^{r}yx^bE_{b-1}x^{s+1})\nonumber\\
&+\sum_{\genfrac{}{}{0pt}{2}{r, s\ge 0}{r+s=b-1}}\sum_{\genfrac{}{}{0pt}{2}{r', s'\ge 0}{r'+s'=a-1}} \left(\frac12 x^{r'+s}E_{b-1}yx^{r+s'+2}-\frac12 x^{r+s'+2}E_{b-1}yx^{r'+s}\right)\,.
\end{aligned}
\end{equation*}  
Now we calculate
\begin{equation*}
\begin{aligned}
\dgal{y, yx^a}\,&=\dgal{y,y}x^a+y\dgal{y,x^a}\\
&=\frac12 y^2E_{-1}x^a-\frac12 E_{-1}y^2x^a\\
&+\sum_{\genfrac{}{}{0pt}{2}{r, s\ge 0}{r+s=a-1}} \left( -yx^{r}E_{-1}x^{s}-\frac12 yx^{r}E_{-1}yx^{s+1} -\frac12 yx^{r+1}yE_{-1}x^{s} +\frac12 yx^{r+1}E_{-1}yx^{s} -\frac12 yx^{r}yE_{-1}x^{s+1} \right)\,.
\end{aligned}
\end{equation*}
Therefore, for $\dgal{yx^a, y}=-\dgal{yx^a, y}^\circ$ we obtain:
\begin{equation*}
\begin{aligned}
\dgal{yx^a, y}\,&= -\frac12 x^aE_{a-1}y^2+\frac12 y^2x^aE_{a-1}\\
&+\sum_{\genfrac{}{}{0pt}{2}{r, s\ge 0}{r+s=a-1}} ( x^sE_{a-1}yx^{r}+\frac12 yx^{s+1}E_{a-1}yx^{r} +\frac12 x^sE_{a-1}yx^{r+1}y -\frac12 yx^sE_{a-1}yx^{r+1} +\frac12 x^{s+1}E_{a-1}yx^{r}y)\,.
\end{aligned}
\end{equation*}

All this allows us to find the ordinary brackets $\{u,v\}=\dgal{u,v}'\dgal{u,v}''$. First, we have for $a=0\mod m$:
\begin{equation}\label{xay}
\{x^a, y\}=\sum_{\genfrac{}{}{0pt}{2}{r, s\ge 0}{r+s=a-1}}( x^{s+r}+ x^{s+1+r}y)=ax^{a-1}+ax^ay\,,
\end{equation}
from which it follows that
\begin{equation*}
\{x^a, yx^b\}=\{x^a, y\}x^b=ax^{a+b-1}+ax^{a}yx^{b}=ax^{a+b-1}+ayx^{a+b}\mod [A,A]\,,
\end{equation*}
which is the first relation in Proposition \ref{Prop:dBrcyclic}.

Next, for $b=1\mod m$ 
\begin{eqnarray*}
\{yx^b, x^a\}&=& \sum_{\genfrac{}{}{0pt}{2}{r, s\ge 0}{r+s=a-1}}\left( -x^{r+b+s}-\frac12 x^{r+b}yx^{s+1} -\frac12 x^{r+1}yx^{b+s} +\frac12 x^{r+1+b}yx^{s} -\frac12 x^{r}yx^{b+s+1}\right)\nonumber\\
&&+\sum_{\genfrac{}{}{0pt}{2}{r, s\ge 0}{r+s=b-1}}\sum_{\genfrac{}{}{0pt}{2}{r', s'\ge 0}{r'+s'=a-1}} \left(\frac12 x^{r'+s}yx^{r+s'+2}-\frac12 x^{r+s'+2}yx^{r'+s}\right)\,,
\end{eqnarray*}  
from which we get that
\begin{equation*}
\begin{aligned}
y\{yx^b, x^a\}&= \sum_{\genfrac{}{}{0pt}{2}{r, s\ge 0}{r+s=a-1}}( -yx^{r+b+s}-\frac12 yx^{r+b}yx^{s+1} -\frac12 yx^{s+1}yx^{b+r} +\frac12 yx^{r+1+b}yx^{s} -\frac12 yx^{s}yx^{b+r+1})\nonumber\\
&+\sum_{\genfrac{}{}{0pt}{2}{r, s\ge 0}{r+s=b-1}}\sum_{\genfrac{}{}{0pt}{2}{r', s'\ge 0}{r'+s'=a-1}} \frac12 [yx^{r'+s},yx^{r+s'+2}]\,,
\end{aligned}
\end{equation*}  
which, modulo commutators, gives
\begin{equation}\label{Term2}
y\{yx^b, x^a\}=-ayx^{a+b-1}-\sum_{\genfrac{}{}{0pt}{2}{r, s\ge 0}{r+s=a-1}}yx^{s+1}yx^{r+b}\mod [A,A]\,.
\end{equation}  
Finally, for $a=1\mod m$:
\begin{eqnarray*}
\{yx^a, y\}\,&=& -\frac12 x^ay^2+\frac12 y^2x^a\\
&&+\sum_{\genfrac{}{}{0pt}{2}{r, s\ge 0}{r+s=a-1}} \left( x^syx^{r}+\frac12 yx^{s+1}yx^{r} +\frac12 x^syx^{r+1}y -\frac12 yx^syx^{r+1} +\frac12 x^{s+1}yx^{r}y \right)\,,
\end{eqnarray*}
and therefore
\begin{equation*}
\begin{aligned}
\{yx^a, y\}x^b\,&= \frac12 [y^2x^b,x^a]\\
&+\sum_{\genfrac{}{}{0pt}{2}{r, s\ge 0}{r+s=a-1}} ( x^syx^{r+b}+\frac12 yx^{s+1}yx^{r+b} +\frac12 x^syx^{r+1}yx^b -\frac12 yx^syx^{r+1+b} +\frac12 x^{s+1}yx^{r}yx^b)\,.
\end{aligned}
\end{equation*}
Modulo commutators, this gives
\begin{equation*}
\sum_{\genfrac{}{}{0pt}{2}{r, s\ge 0}{r+s=a-1}} \left( yx^{r+s+b}+\frac12 yx^{s+1}yx^{r+b} +\frac12 yx^{r+1}yx^{s+b} -\frac12 yx^syx^{r+1+b} +\frac12 yx^{r}yx^{s+1+b} \right)\,.
\end{equation*}
Therefore, 
\begin{equation}\label{Term1}
\{yx^a, y\}x^b\,=ayx^{a+b-1}+\sum_{\genfrac{}{}{0pt}{2}{r, s\ge 0}{r+s=a-1}} yx^{s+1}yx^{r+b}\mod [A,A]\,.
\end{equation}
Now, using \eqref{Term2}, \eqref{Term1}, we obtain modulo commutators: 
\begin{equation*}
\{yx^b, yx^c\}=\{yx^b, y\}x^c+y\{yx^b, x^c\}=(b-c)yx^{b+c+1}+\sum_{\genfrac{}{}{0pt}{2}{r, s\ge 0}{r+s=b-1}} yx^{s+1}yx^{r+c}-\sum_{\genfrac{}{}{0pt}{2}{r, s\ge 0}{r+s=c-1}} yx^{s+1}yx^{r+b}\,,
\end{equation*}
which gives \eqref{Eq:cycdgg}. This finishes the proof of Proposition \ref{Prop:dBrcyclic}. 
\end{proof}

For the tadpole quiver, the brackets in \eqref{tadida}--\eqref{tadide} look entirely similar (with small sign differences), and all the above proofs carry over with $E_r=E_0=e_0\otimes e_0$ for all $r$, leading to
\begin{prop}
\label{Prop:dBrTadapp}
For any $a, b\ge 0$ we have
  \begin{subequations}
       \begin{align}
\{x^a, x^b\}&=\{y^a, y^b\}=\{(xy)^a, (xy)^b\}=\{z^a, z^b\}=0\,,\quad\text{where}\ z=y+x^{-1}\,,\\
\{x^a, yx^b\}&=ax^{a+b-1}+ayx^{a+b} \mod{[A,A]}\,,\label{Eq:tadfg2}\\
\{yx^a, yx^b\}&=(a-b) yx^{a+b-1}+ \sum \limits_{t=1}^{a} yx^tyx^{a+b-t}- \sum \limits_{t=1}^{b} yx^tyx^{a+b-t} \mod{[A,A]}\,.\label{Eq:tadgga}
   \end{align}
  \end{subequations}
\end{prop}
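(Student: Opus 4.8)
The plan is to transport the two proofs just given for the cyclic quiver (Propositions~\ref{Prop:intcyclic} and~\ref{Prop:dBrcyclic}) to the tadpole setting, which is precisely the degenerate case of a single vertex. Here the index set collapses to $I=\{0\}$, the loop $x$ satisfies $e_0x=xe_0=x$ (and likewise for $y$), and every element $E_r$ of \eqref{er} reduces to the single tensor $E_0=e_0\otimes e_0$. I would therefore take $\mathcal E=\CC E_0$ throughout. Two facts make the single-vertex case especially clean: $m(E_0)=e_0$ acts as the identity on each of $x,y,xy$ and $z=y+x^{-1}$, and the grading bookkeeping \eqref{gr} trivializes to the identity $(uE_0v)^\circ=vE_0u$, valid because every element in sight is supported at the vertex $0$.

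For the involutivity statements, I would first record the seed brackets \eqref{tadida}--\eqref{tadidb} (together with \eqref{tadidaz} for $z$) and recompute $\dgal{xy,xy}$ exactly as in the derivation of \eqref{xyxy}, obtaining an expression of the same shape $\sum_i(a^iE_0-E_0a^i)$ with $a=xy$. With this, each of $a=x,y,xy,z$ has $\dgal{a,a}$ in the form required by the general Lemma above, whose remaining hypotheses---the conjugation symmetry of $\mathcal E$ and the commutation of $m(\mathcal E)=\CC e_0$ with $a$---hold by the previous paragraph. Applying that Lemma yields $\{x^a,x^b\}=\{y^a,y^b\}=\{(xy)^a,(xy)^b\}=\{z^a,z^b\}=0$.

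For the explicit formulas, I would re-run the chain of double-bracket computations from the proof of Proposition~\ref{Prop:dBrcyclic}, substituting $E_r\mapsto E_0$ at every stage: first $\dgal{x,x^a}$ and $\dgal{x^a,x^b}$, then $\dgal{x^a,y}$, $\dgal{x^a,yx^b}$ and $\dgal{yx^b,x^a}$, and finally $\dgal{yx^a,y}$. Composing with the multiplication $m$ and reducing modulo $[A,A]$ reproduces the analogs of \eqref{xay}, \eqref{Term2} and \eqref{Term1}, which combine to give \eqref{Eq:tadfg2} and \eqref{Eq:tadgga}. A convenient simplification is that the $\bmod\ m$ congruence constraints appearing in Proposition~\ref{Prop:dBrcyclic} become vacuous when $m=1$, so the resulting identities hold for all $a,b\ge 0$ with no restriction.

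The only point requiring genuine care---and the main obstacle---is the handful of sign discrepancies between the tadpole seeds \eqref{tadida}--\eqref{tadide} and their cyclic counterparts in Lemma~\ref{cxy}. These arise because $x$ is now a genuine loop, so the loop term \eqref{loop} contributes to $\dgal{x,x}$ and $\dgal{y,y}$, whereas for $m\ge 2$ the arrows $x_i$ are not loops and these brackets come entirely from fusion. I would track the signs term by term through the computation and verify that they conspire to cancel, yielding exactly the same final expressions as in the cyclic case up to the harmless relabeling of indices. Granting this bookkeeping, Propositions~\ref{Prop:int} and~\ref{Prop:dBrTad} follow at once, since the tadpole double brackets \eqref{tadida}--\eqref{tadide} are precisely the ones fed into the computation.
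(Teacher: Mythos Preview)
Your proposal is correct and follows essentially the same approach as the paper: carry the cyclic-quiver computations over to the single-vertex case by replacing every $E_r$ with $E_0=e_0\otimes e_0$, invoke the same general lemma for the vanishing statements, and re-run the explicit double-bracket chain for \eqref{Eq:tadfg2} and \eqref{Eq:tadgga}, tracking the minor sign discrepancies coming from the loop contributions. The paper's own proof is no more detailed than your sketch---it simply asserts that the brackets ``look entirely similar (with small sign differences)'' and that ``all the above proofs carry over''.
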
 
Finally, replacing $y$ by $z-x^{-1}$ in the last two formulas, one gets after a simple rearrangment the formulas from Propositions \ref{Prop:dBrTad}.
\qed

\section{Hamiltonian dynamics} \label{Ann:dynamics} 
In this section we prove Propositions \ref{dynam}. We start with a simple formula, immediate from \eqref{derr}: for any $a,b\in A$,
\begin{equation*}
\{\tr a,\, b_{ij}\}=\{a, b\}_{ij}\,.
\end{equation*}
It is convenient to rewrite this relation in matrix form, using the notation $\mathcal{X}(a)=(a_{ij})$ for the matrix-valued function on the representation space, associated to $a\in A$:
\begin{equation}\label{trx}
\{\tr \mathcal{X}(a), \mathcal{X}(b)\}=\mathcal{X}(\{a, b\})\,.
\end{equation}

Let now $\bar{Q}$ be the doubled framed quiver as in Section \ref{cyclic}, with the double bracket defined by the bivector \eqref{Eq:PP}, and $\{-,-\}$ be the associated bracket $\{-,-\}=m\circ \dgal{-,-}$. Recall that $\{y^k, y\}=0$ for all $k$.
\begin{lem}
For any $k\in m\N$,
\begin{equation*} 
\{y^k, v\}=\{y^k, w\}=0\,,\qquad \{y^k, x\}=-ky^{k-1}-ky^kx\,. 
\end{equation*}
\end{lem}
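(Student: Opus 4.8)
The plan is to reduce every bracket to the single double bracket $\dgal{y,c}$ and then exploit the hypothesis $k\in m\N$ to close up cyclic paths. The starting point is a master formula. Since $\dgal{-,-}$ is a derivation in its second argument, an immediate induction gives $\dgal{c,y^k}=\sum_{r+s=k-1}y^r\dgal{c,y}y^s$. Applying the anti-symmetry $\dgal{y^k,c}=-\dgal{c,y^k}^\circ$ followed by the multiplication $m$, every one of the $k$ summands collapses to the same element, so that
\[
\{y^k,c\}=k\,\dgal{y,c}'\,y^{k-1}\,\dgal{y,c}''\,.
\]
This identity holds for all $k$ and reduces the lemma to computing $\dgal{y,v}$, $\dgal{y,w}$, $\dgal{y,x}$ and then inserting a single central power $y^{k-1}$.

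Next I would determine the three double brackets. The bracket $\dgal{y,x}$ is read off from \eqref{cycidc}. For $\dgal{y,v}$ and $\dgal{y,w}$, which are not listed in Section \ref{cyclic}, I would return to \eqref{a<b}: among the components $y_i$ of $y$, only $y_0$ (since $h(y_0)=t(v)=0$, producing the composition term) and $y_{m-1}$ (since $t(y_{m-1})=t(v)=0$, producing a $\delta_{t,t}$ term) interact with $v$, which gives $\dgal{y,v}=\tfrac12 e_0\otimes y_0v-\tfrac12 y_{m-1}\otimes v$. The analogous computation with $w:\infty\to 0$ gives $\dgal{y,w}=\tfrac12 wy_{m-1}\otimes e_0-\tfrac12 w\otimes y_0$. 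Here one uses the ordering $y_0<v<w$ and $y_{m-1}<v<w$ fixed in Section \ref{cyclic}.

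The decisive step is the insertion of $y^{k-1}$ together with $k\equiv 0\pmod m$. Writing $y_0=ye_0$ and $y_{m-1}=e_0y$, and noting that a length-$(k-1)$ path of $y$ beginning at vertex $i-1$ ends at vertex $i-k\equiv i$, every sandwiched factor $e_{i-1}y^{k-1}e_i$ simplifies to $e_{i-1}y^{k-1}$ and $e_iy^ke_i=e_iy^k$. Carrying this through $\dgal{y,v}$, both terms reduce to $\tfrac12 e_0y^kv$ with opposite signs and hence cancel, so $\{y^k,v\}=0$; the same computation gives $\{y^k,w\}=0$. For $\dgal{y,x}$, after summing $E_{-1}=\sum_i e_{i-1}\otimes e_i$ over $i$, the five terms of \eqref{cycidc} produce $-y^{k-1}$, $-\tfrac12 y^kx$, $-\tfrac12 xy^k$, $+\tfrac12 xy^k$, $-\tfrac12 y^kx$; the two $xy^k$ contributions cancel and the two $y^kx$ contributions add, yielding $\{y^k,x\}=-k(y^{k-1}+y^kx)$.

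The main obstacle I anticipate is twofold. First, pinning down $\dgal{y,v}$ and $\dgal{y,w}$ correctly, since these are absent from the main text and one must check that the composition term $\tfrac12 e_0\otimes y_0v$ genuinely occurs and is attached at the right vertex (it is the summand of \eqref{Eq:PP} coming from the pair consisting of $v$ and $y_0^{\,*}=x_0$, which shares the tail $0$ with $v$). Second, the idempotent bookkeeping in the $\dgal{y,x}$ computation: it is exactly the hypothesis $k\in m\N$ that forces the cyclic paths to close and generates the cancellations, whereas for $k\not\equiv 0\pmod m$ the factors $e_{i-1}y^{k-1}e_i$ would vanish and the answer would be entirely different.
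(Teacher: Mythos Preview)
Your argument is correct. The double brackets $\dgal{y,v}$ and $\dgal{y,w}$ that you extract from \eqref{a<b} agree with the paper's (stated there componentwise as $\dgal{y_i,v}$, $\dgal{y_i,w}$), and your term-by-term evaluation of $\dgal{y,x}'\,y^{k-1}\,\dgal{y,x}''$ from \eqref{cycidc} is accurate.

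The route differs slightly from the paper's. The paper computes the full double bracket $\dgal{v,y^k}=\sum_{r+s=k-1}y^r\dgal{v,y}y^s$ and observes that the sum telescopes to $\tfrac12 v\otimes e_0y^k-\tfrac12 y^kv\otimes e_0$, after which applying $m\circ(-)^\circ$ gives zero; the relation for $\{y^k,x\}$ is then quoted as analogous to the earlier computation of $\{x^a,y\}$. You instead apply the multiplication map immediately after the cyclic anti-symmetry, so that all $k$ summands collapse to the single expression $\dgal{y,c}'\,y^{k-1}\,\dgal{y,c}''$, giving your master formula. This bypasses the telescoping entirely and makes the factor $k$ manifest. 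The paper's approach yields a bit more (the closed form of the double bracket itself), which is not needed for the lemma; your shortcut is cleaner for the ordinary bracket and handles all three cases uniformly. Either way, the hypothesis $k\in m\N$ enters at exactly the same point, namely in forcing $e_{i-1}y^{k-1}e_i\neq 0$.
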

\begin{proof}
From \eqref{a<b} we have 
\begin{eqnarray*}
\dgal{y_{i},v}&=&-\frac12 (y_{m-1}\otimes v)\delta_{i, m-1}+\frac12 (e_0\otimes y_0v)\delta_{i,0}\,, \\
\dgal{y_{i},w}&=&\frac12 (wy_{m-1}\otimes e_0)\delta_{i, m-1}-\frac12 (w\otimes y_0)\delta_{i,0}\,.
\end{eqnarray*}
Using this, one preforms a calculation as in Appendix \ref{Ann:brackets} to find that 
\begin{equation*}
\dgal{v, y^k}=\sum_{r+s=k-1} y^r\dgal{v,y}y^s=\sum_{r+s=k-1} \left(\frac12 y^rv\otimes e_0y^{s+1}-\frac12 y^{r+1}v\otimes e_0y^s\right)\,, 
\end{equation*}
and $\{y^k, v\}=0$ as a result. The relation $\{y^k, w\}=0$ is checked similarly. The last formula in the lemma is analogous to \eqref{xay}. 
\end{proof}
Now consider a representation space $\Rep(\CC \bar{Q}, \aalpha)$ for a dimension vector $\aalpha=(1, \alpha)\in\N^{m+1}$, and let as before $V,W, X, Y$ be the matrices representing the arrows $v, w$ and $x=\sum_i x_i$, $y=\sum_i y_i$. The space $\Rep(\CC \bar{Q}, \aalpha)$ is equipped with an anti-symmetric biderivation $\{-,-\}$ defined by \eqref{derr}. 
We put $H_k=\frac1k\tr Y^k$, where $Y$ is the matrix representing $y=\sum_{i\in I} y_i$ and $k\in m\N$. Then $\{H_k, -\}$ defines a derivation (vector field) on the representation space. Since $\{H_k, H_l\}=0$ for all $k,l$ and $\{ -,-\}$ is a Loday bracket (see \eqref{loday}), the vector fields associated to $H_k$ with different $k$ pairwise commute. Let $\frac{d}{dt_j}$ denote the vector field corresponding to $H_{jm}$. Using the above lemma and formula \eqref{trx}, we obtain: 
\begin{equation*}
\frac{d}{dt_j}V=\frac{d}{dt_j}W=0\,,\qquad \frac{d}{dt_j}Y=0\,,\quad \frac{d}{dt_j}X=-Y^{jm-1}-Y^{jm}X\,.
\end{equation*}
Integrating the last equation with constant $Y=Y(0)$, we find that 
\begin{equation*}
X(t_j)=e^{-t_jY^{jm}}X(0)+Y^{-1}(e^{-t_jY^{jm}}-1)\,.
\end{equation*}
This formula is well-defined for all $Y$ because the function $z^{-1}(e^{-t_kz^k}-1)$ is analytic in $z$. 
By superposition, for $\mathbf{t}=(t_1, t_{2}, \dots)$ we have:
\begin{equation*}
X(\mathbf{t})=e^{-\sum_{j\ge 1} t_{j}Y^{jm}}\,X(0)+Y^{-1}\left(e^{-\sum_{j\ge 1}t_jY^{jm}}-1\right)\,.
\end{equation*}
Note that $1+YX=e^{-\sum_{j\ge 1} t_{j}Y^{jm}}(1+YX(0))$, so the matrices $\Id_n+Y_sX_s$, $s\in I$ remain invertible for all times.

If instead of $H_k$ one considers $G_k:=\frac1k \tr Z^k$, $Z=Y+X^{-1}$, then the brackets look the same, apart from the bracket between $z$ and $x$ \eqref{xz}. As a result, if $t_j$ denotes the time flow for $G_{jm}$, we obtain:
 \begin{equation*}
\frac{d}{dt_j}V=\frac{d}{dt_j}W=0\,,\qquad \frac{d}{dt_j}Z=0\,,\quad \frac{d}{dt_j}X=-Z^{jm}X\,.
\end{equation*}
This leads to 
\begin{equation*}
X(\mathbf{t})=e^{-\sum_{j\ge 1} t_{j}Z^{jm}}\,X(0)\,,\quad Z=Z(0)\,.
\end{equation*}
In both cases, it is easy to see that the dynamics preserves the moment map equations. Therefore, the associated Hamiltonian flows are complete.
\qed

\section{Poisson isomorphism between $\mathcal C_{n,t}^0$ and $\mathcal C_{n, q}^0(m)$}\label{sums}

In this section we prove Proposition \ref{IsoPssCyc}. The map $\xi:\, \mathcal C_{n,t}^0 \to \mathcal{C}_{n,q}^0(m)$ is described in Proposition \ref{isocm}, namely:
\begin{equation*}
X_i=\Id_n\,,\quad Z_i=t_iB\ (i=0\dots m-2)\,,\quad X_{m-1}=A\,,\ Z_{m-1}=t_{m-1}A^{-1}B\,.
\end{equation*}
We need to check that  
$\xi^*\{f, g\}=\{\xi^*f,\xi^*g\}$ for any two functions on $\mathcal{C}_{n,q}^0(m)$. 
Consider the functions $f_\alpha:=\tr(X^{\alpha m})$ and $g_\beta:=\tr (Z X^{1+\beta m})$. Expressing them in terms of $A, B$ we obtain:
\begin{subequations}
\begin{align}
 \xi^*f_\alpha  = & \sum_{i\in I} \tr X_{i+1}\dots X_{i+\alpha m}=m\tr A^\alpha\,,\\
\xi^*g_\beta= & \sum_{i\in I} \tr Z_{i}X_{i}X_{i+1}\dots X_{i+\beta m}=\tau\tr BA^{\beta}\,,\quad\text{where}\  \tau:= \sum_{i\in I} t_i\,.
\end{align}
\end{subequations}
This implies that $f_1, \dots, f_n, g_1, \dots, g_n$ can be used as local coordinates near a generic point of $\mathcal{C}_{n,q}^0(m)$. Therefore, it is sufficient to check that the brackets behave well just for these functions. 

Let us rewrite the formulas from Proposition \ref{Prop:dBrcyclic} in the algebra $A'$ with inverted $x$. We use $z:=y+x^{-1}$ so that $zx=1+yx$, and rearrange \eqref{Eq:cycfg}--\eqref{Eq:cycdgg} as follows:
 \begin{subequations}
       \begin{align}
\{x^a, zx^{b}\}&=azx^{a+b} \mod{[A', A']}\,,\label{Eq:cycfgap}\\
\{zx^{b}, zx^{c}\}&=\sum \limits_{t=1}^{b} zx^{t}zx^{b+c-t}- \sum \limits_{t=1}^{c} zx^{t}zx^{b+c-t} \mod{[A', A']}\,.\label{Eq:cycdggap}
   \end{align}
  \end{subequations}
Recall that we also have $\{x^a, x^b\}=0$ for all $a,b$.  
Substituting $a=\alpha m$, $b=1+\beta m$, $c=1+\gamma m$ and taking traces using \eqref{trace} leads to:
 \begin{subequations}
       \begin{align}
\{f_\alpha, f_\beta\}&=0\,,\quad \{f_\alpha, g_\beta\}=\alpha m\, g_{\alpha+\beta} \,,\label{Eq:cycfgapt}\\
\{g_{\beta}, g_{\gamma}\}&=\sum \limits_{r=0}^{\beta m} h_{r, (\beta+\gamma)m-r} - \sum \limits_{r=0}^{\gamma m} h_{r, (\beta+\gamma)m-r} \,.\label{Eq:cycdggapt1}
\end{align}
  \end{subequations}  
Here we used the notation $h_{r,s}:=\tr Z X^{1+r} Z X^{1+s}$. Using that $h_{r,s}=h_{s,r}$ and assuming $\beta<\gamma$, the last relation can be rearranged to
\begin{equation}
\{g_{\beta}, g_{\gamma}\}=\sum \limits_{r=\beta m}^{\gamma m-1} h_{r, (\beta+\gamma)m-r} \,.\label{Eq:cycdggapt}
\end{equation}
On the other hand, the Poisson bracket on $\mathcal C_{n,t}^0$ satisfies \eqref{old}, which in terms of $A,B$ reads:
 \begin{subequations}
       \begin{align}
\{\tr A^a, \tr A^b\}&=0\,,\quad \{\tr A^a, \tr BA^b\}= a\, \tr BA^{a+b}\,,\\
\{\tr BA^b, \tr BA^c\}&=\sum \limits_{r=0}^b \tr BA^rBA^{b+c-r}-\sum \limits_{r=0}^c \tr BA^rBA^{b+c-r}=\sum_{r=b}^{c-1}\tr BA^rBA^{b+c-r}\,.
\end{align}
  \end{subequations}  
It remains to check that these formulas agree with those obtained from \eqref{Eq:cycfgapt} and \eqref{Eq:cycdggapt}  by replacing $f_\alpha$, $g_\beta$ with their pull-backs $\xi^*f_\alpha=m\tr A^\alpha$ and $\xi^*g_\beta=\tau\tr BA^\beta$. For the first two relations this is obvious. For the third relation, we need to show that for $\beta<\gamma$
\begin{equation}\label{hid}
\sum \limits_{r=\beta m}^{\gamma m-1} \xi^*h_{r, (\beta+\gamma)m-r}=\tau^2\sum \limits_{p=\beta}^{\gamma-1} \tr BA^pBA^{\beta+\gamma-p}\,.
\end{equation}     
We have $\xi^*h_{r,s}=\tr XZX^rXZX^s$, with
\begin{equation*}
XZX^rXZX^s=\sum_{i\in I}  X_iZ_iX_i\dots X_{i+r-1}X_{i+r}Z_{i+r}X_{i+r}\dots X_{i+r+s-1}\,.
\end{equation*}
By expressing this in terms of $A,B$ and taking traces we obtain:
\begin{equation*}
\xi^*h_{r,s}= \sum_{i\in I} t_{i}t_{i+r}\tr BA^{\phi(i, i+r)}BA^{\phi(i+r, i+r+s)}\,,
\end{equation*}
where $\phi(i,j)=\left[\frac{j}{m}\right]-\left[\frac{i}{m}\right]$ counts the number of integers between $i$ and $j-1$ congruent to $-1$ modulo $m$. Here and below the indices in $t_i$ are always treated modulo $m$, i.e., $t_{i+m}=t_i$. Therefore, if $r=j+pm$ then
\begin{equation*}
\xi^*h_{r, (\beta +\gamma)m -r}= \sum_{i\in I} t_{i}t_{i+j}\tr BA^{p+\phi(i, i+j)}BA^{\beta+\gamma- p- \phi(i, i+j)}\,.
\end{equation*}
Note that for $i, j\in I$, $\phi(i, i+j)$ equals $0$ or $1$, depending on whether $i+j\le m-1$ or not. We can use this in the l.h.s. of \eqref{hid}, replacing the summation over $r$ with the summation over $p\in [\beta, \gamma-1]$ and $j\in I$, which leads to
\begin{equation*}
 \sum_{\genfrac{}{}{0pt}{2}{i, j\in I}{i+j\le m-1}}
 t_{i}t_{i+j} \sum \limits_{p=\beta}^{\gamma-1} \tr BA^{p}BA^{\beta+\gamma-p}
+ \sum_{\genfrac{}{}{0pt}{2}{i, j\in I}{i+j> m-1}}
 t_{i}t_{i+j} \sum \limits_{p=\beta}^{\gamma-1} \tr BA^{p+1}BA^{\beta+\gamma-p-1}\,.
\end{equation*}
It is easy to see that $\sum \limits_{p=\beta}^{\gamma-1} \tr BA^{p+1}BA^{\beta+\gamma-p-1}=\sum \limits_{p=\beta}^{\gamma-1} \tr BA^{p}BA^{\beta+\gamma-p}$, therefore, we arrive at
\begin{equation*}
\sum_{i, j\in I} t_{i}t_{i+j}\sum \limits_{p=\beta}^{\gamma-1} \tr BA^{p}BA^{\beta+\gamma-p}\,.
\end{equation*}
It remains to notice that $\sum_{i, j\in I} t_{i}t_{i+j}=(\sum_{i\in I} t_{i})^2=\tau^2$, so the obtained expression coincides with the r.h.s. of the relation \eqref{hid}. This finishes the proof of \eqref{hid} and of the Proposition \ref{IsoPssCyc}.
\qed


\end{document}